\def\rad{\operatorname{rad}}
\def\radop{\rad_{\mathrm{op}}}
\def\ac{{\rm ac}}
\def\val{{\mathrm{val}}}
\def\11{{\mathbf 1}}
\def\NN{{\mathbb N}}
\def\QQ{{\mathbb Q}}
\def\RR{{\mathbb R}}
\def\ZZ{{\mathbb Z}}
\def\cA{{\mathcal A}}
\def\cB{{\mathcal B}}
\def\cL{{\mathcal L}}
\def\cM{{\mathcal M}}
\def\cO{{\mathcal O}}
\def\cT{{\mathcal T}}
\mathchardef\alphag="7C0B \mathchardef\betag="7C0C
\mathchardef\gammag="7C0D \mathchardef\deltag="7C0E
\mathchardef\varepsilong="7C22 \mathchardef\varphig="7C27
\mathchardef\psig="7C20 \mathchardef\zetag="7C10
\mathchardef\epsilong="7C0F \mathchardef\rhog="7C1A
\mathchardef\taug="7C1C \mathchardef\upsilong="7C1D
\mathchardef\iotag="7C13 \mathchardef\thetag="7C12
\mathchardef\pig="7C19 \mathchardef\sigmag="7C1B
\mathchardef\etag="7C11 \mathchardef\omegag="7C21
\mathchardef\kappag="7C14 \mathchardef\lambdag="7C15
\mathchardef\mug="7C16 \mathchardef\xig="7C18
\mathchardef\chig="7C1F \mathchardef\nug="7C17
\mathchardef\varthetag="7C23 \mathchardef\varpig="7C24
\mathchardef\varrhog="7C25 \mathchardef\varsigmag="7C26
\mathchardef\Omegag="7C0A \mathchardef\Thetag="7C02
\mathchardef\Sigmag="7C06 \mathchardef\Deltag="7C01
\mathchardef\Phig="7C08 \mathchardef\Gammag="7C00
\mathchardef\Psig="7C09 \mathchardef\Lambdag="7C03
\mathchardef\Xig="7C04 \mathchardef\Pig="7C05
\mathchardef\Upsilong="7C07
\newtheorem{theorem}[subsubsection]{Theorem}
\newtheorem{thm}[subsubsection]{Theorem}
\newtheorem{lem}[subsubsection]{Lemma}
\newtheorem{prop}[subsubsection]{Proposition}
\theoremstyle{definition}
\newtheorem{defn}[subsubsection]{Definition}
\newtheorem{example}[subsubsection]{Example}
\newtheorem{def-prop}[subsubsection]{Proposition-Definition}
\newtheorem{def-theorem}[subsubsection]{Theorem-Definition}
\newtheorem{def-lem}[subsubsection]{Lemma-Definition}
\theoremstyle{remark}
\newtheorem{remark}[subsubsection]{Remark}
\theoremstyle{plain}
\numberwithin{equation}{subsection}
\def\boxit#1#2{\setbox1=\hbox{\kern#1{#2}\kern#1}%
\dimen1=\ht1 \advance\dimen1 by #1 \dimen2=\dp1 \advance\dimen2 by
#1
\setbox1=\hbox{\vrule height\dimen1 depth\dimen2\box1\vrule}%
\setbox1=\vbox{\hrule\box1\hrule}%
\advance\dimen1 by .4pt \ht1=\dimen1 \advance\dimen2 by .4pt
\dp1=\dimen2 \box1\relax}
\renewcommand{\theequation}{\thesubsection.\arabic{equation}}
\mathchardef\alphag="7C0B \mathchardef\betag="7C0C
\mathchardef\gammag="7C0D \mathchardef\deltag="7C0E
\mathchardef\varepsilong="7C22 \mathchardef\varphig="7C27
\mathchardef\psig="7C20 \mathchardef\zetag="7C10
\mathchardef\epsilong="7C0F \mathchardef\rhog="7C1A
\mathchardef\taug="7C1C \mathchardef\upsilong="7C1D
\mathchardef\iotag="7C13 \mathchardef\thetag="7C12
\mathchardef\pig="7C19 \mathchardef\sigmag="7C1B
\mathchardef\etag="7C11 \mathchardef\omegag="7C21
\mathchardef\kappag="7C14 \mathchardef\lambdag="7C15
\mathchardef\mug="7C16 \mathchardef\xig="7C18
\mathchardef\chig="7C1F \mathchardef\nug="7C17
\mathchardef\varthetag="7C23 \mathchardef\varpig="7C24
\mathchardef\varrhog="7C25 \mathchardef\varsigmag="7C26
\mathchardef\Omegag="7C0A \mathchardef\Thetag="7C02
\mathchardef\Sigmag="7C06 \mathchardef\Deltag="7C01
\mathchardef\Phig="7C08 \mathchardef\Gammag="7C00
\mathchardef\Psig="7C09 \mathchardef\Lambdag="7C03
\mathchardef\Xig="7C04 \mathchardef\Pig="7C05
\mathchardef\Upsilong="7C07
\newcommand{\ovf}{\mathrm{ovf}}
\newcommand{\RV}{\mathrm{RV}}
\newcommand{\VF}{\mathrm{VF}}
\newcommand{\VG}{\mathrm{VG}}
\newcommand{\RF}{\mathrm{RF}}
\newcommand{\rv}{\operatorname{rv}}
\newcommand{\res}{\operatorname{res}}
\newcommand{\Th}{\operatorname{Th}}
\newcommand{\alg}{{\mathrm{alg}}}
\newcommand{\rcl}{\mathrm{rcl}}
\newcommand{\oag}{\mathrm{oag}}
\newcommand{\an}{\mathrm{an}}
\newcommand{\ifin}[1]{{#1}^{\circ \circ}} 	%infinitesimals
\newcommand{\Kinf}{K^{\circ \circ}} %infinitesimals
\newcommand{\Kfin}{K^{\circ}}       %finite elements
\newcommand{\Lor}{\cL_{\mathrm{or}}}
\newcommand{\abs}[1]{\lvert#1\rvert}
\DeclarePairedDelimiter{\norm}{\lVert}{\rVert}
\DeclarePairedDelimiter{\bignorm}{\big\lVert}{\big\rVert}
\definecolor{immi}{rgb}{0,.6,.1}
\newbox\removebox
\newcommand\remove[1]{%
\setbox\removebox=\ifmmode\hbox{$#1$}\else\hbox{#1}\fi%
\leavevmode
\rlap{\textcolor{blue}{\vrule height0.8ex depth-0.6ex width\wd\removebox}}%
\box\removebox
}
\long\def\bigremove#1{%
\par\setbox\removebox=\vbox{#1}%
\vbox{%
\vbox to0pt{\hbox{\tikz\draw[color=blue,thick] (0,0) -- (\wd\removebox,-\ht\removebox)  (\wd\removebox,0) -- (0,-\ht\removebox);}}
\box\removebox
}
}
\newcommand\qftp{\mathrm{qftp}}
\newcommand\dcl{\mathrm{dcl}}
\definecolor{orange}{rgb}{1,0.5,0}
\newcommand{\private}[1]{\leavevmode{\scriptsize\color{blue}\marginpar{{\scriptsize Private comment}}#1\par}}
\renewcommand{\private}[1]{}
\thanks{}
\title{Almost real closed fields with real analytic structure}
\author{Kien Huu Nguyen, Mathias Stout, and Floris Vermeulen}
\subjclass[2020]{Primary 03C64, 32P05, 32B05; Secondary 03C60,  14P15}
\keywords{analytic structure, subanalytic sets, quantifier elimination, Weierstrass	systems, henselian valued fields, almost real closed fields.}
\begin{document}

\begin{abstract}
Cluckers and Lipshitz have shown that real closed fields equipped with real analytic structure are o-minimal. 
This generalizes the well-known subanalytic structure $\RR_{\an}$ on the real numbers.
We extend this line of research by investigating ordered fields with real analytic structure that are not necessarily real closed.
When considered in a language with a symbol for a convex valuation ring, these structures turn out to be tame as valued fields: we prove that they are $\omega$-h-minimal. 
Additionally, our approach gives a precise description of the induced structure on the residue field and the value group, and naturally leads to an Ax--Kochen--Ersov-theorem for fields with real analytic structure.
\end{abstract}

\maketitle

\section{Introduction}

Since the work of {\L}ojasiewic, Gabrielov, and Hironaka from the sixties, it has been known that subanalytic sets in $\RR^n$ exhibit tame behaviour similar to the semi-algebraic setting~\cite{Loj65,Gab68,Hir73,Hir73b}. In more modern terms, the structure $\RR$ expanded with function symbols for restricted analytic functions is o-minimal. The original works by {\L}ojasiewic, Gabrielov, and Hironaka are rather long and difficult, and work by Denef--van den Dries~\cite{DvdD} greatly simplifies the treatment of subanalytic sets through the use of \emph{Weierstrass division}. Since then, this Weierstrass division has been the central tool for understanding and developing analytic structures on fields, see e.g.\ ~\cite{vdDAx, DMM,DenLip,CLRr,CLR,CLips,CubidesHaskell,Lip93, bhardwaj-vdd}. A comprehensive treatment of fields with analytic structure is the foundational work by Cluckers--Lipshitz~\cite{CLip}, which includes both the case of real closed fields and henselian valued fields with analytic structure.

In the current article we continue this line of research, by focussing on fields equipped with real analytic structure, as defined in~\cite{CLip}. Up to now, these structures have only been considered when the field is real closed, leading to o-minimal structures. However, the notion of a field with real analytic structure makes sense even without this assumption, and the aim of this article is to develop the theory of arbitrary fields equipped with real analytic structure. As it turns out, any such field comes with a natural henselian valuation for which the resulting residue field is $\RR$. In particular, such fields are \emph{almost real closed}. 

The prototypical example to keep in mind is the field $K = \RR((t))$ with its usual valuation. Then any analytic function on $[-1,1]_{\RR}^n$ defined by some convergent power series $f(x_1, \ldots, x_n)$ still makes sense when evaluating at elements of $[-1,1]_K^n$. This equips $K$ with \emph{real analytic structure}. Note that this structure contains more than just the $t$-adically convergent power series. In full generality, we study fields equipped with $\cB$-analytic structure, where $\cB$ is a real Weierstrass system as defined by Cluckers--Lipshitz~\cite{CLip}. In fact, for our purpose we need to amend the definition given there, as will be explained in detail in Section~\ref{sec:weierstrass.systems}. We note that all natural examples of real Weierstrass systems are still included in our updated definition.

Our first main result is that almost real closed fields equipped with $\cB$-analytic structure exhibit tame behaviour. Of course, in this generality one cannot hope for the theory to be o-minimal or even weakly o-minimal, simply because the field is not necessarily real closed. Instead we show that such fields are $\omega$-h-minimal, a tameness notion for valued fields developed by Cluckers--Halupczok--Rideau~\cite{CHR}.

As a consequence of $\omega$-h-minimality, one automatically obtains that definable functions satisfy the Jacobian property, cell decomposition results, and dimension theory. Furthermore, one also obtains bounds on rational points on transcendental definable curves in a suitable sense~\cite{CNSV}, see Example~\ref{ex:counting.dim}. This counting of rational points was in fact the original motivation for this project.

Denote by $\cL_{\val, \cB}$ the language of valued fields $\{0,1,+,\cdot, \cO\}$ expanded with function symbols for all elements of $\cB$. Then we prove the following.

\begin{theorem}\label{thm:real.h.minimal}
Let $\cB$ be a real Weierstrass system which is strong and rich, and let $K$ be an almost real closed field with $\cB$-analytic structure. Let $\cO_K$ be a convex valuation ring of $K$ and consider $K$ as an $\cL_{\val, \cB}$-structure. Then $\Th_{\cL_{\val, \cB}}(K)$ is $\omega$-h-minimal.
\end{theorem}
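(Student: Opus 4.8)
The strategy is to reduce $\omega$-h-minimality of $K$ in the language $\cL_{\val,\cB}$ to a statement purely about the real closed residue field $\RR$ with real analytic structure, for which o-minimality (hence in particular the needed tameness) is known from Cluckers--Lipshitz. Since $K$ is almost real closed, by definition there is a henselian valuation $v_0$ on $K$ with real closed residue field and divisible value group; the convex valuation ring $\cO_K$ refines (or is refined by) $\cO_{v_0}$, and one first reduces to the coarsest/finest relevant valuation using the transfer principles for $\omega$-h-minimality under coarsening and under passing to the residue field (these are in \cite{CHR}; $\omega$-h-minimality of a valued field with a given analytic structure is inherited by coarsenings, and conversely can be checked on a coarsening together with the induced structure on the residue ring). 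The key input is that the induced analytic structure on the residue field of $v_0$ is precisely real analytic structure over $\RR$ in the sense of \cite{CLip}, so that residue field is o-minimal; and o-minimal expansions of real closed fields, viewed as trivially valued fields, are $\omega$-h-minimal (indeed $1$-h-minimal with the trivial valuation, which bootstraps to $\omega$-h-minimality here because the value group is pure).

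The concrete work, then, is a quantifier-type analysis. Following the Denef--van den Dries/Cluckers--Lipshitz paradigm, I would establish a preparation-and-reparametrization statement for $\cL_{\val,\cB}$-definable sets: every such set in one valued-field variable $x$ (over parameters) can, after a finite partition of the parameter space, be put in a "prepared" form in which $x$ ranges over a ball and all the relevant $\cB$-terms become, after Weierstrass division, units times polynomials with coefficients that are again $\cB$-terms in the parameters and whose valuations behave linearly. This is exactly the mechanism by which $\RR_{\an}$-style arguments are carried out, and the amended definition of a strong, rich real Weierstrass system in Section~\ref{sec:weierstrass.systems} is presumably tailored so that this Weierstrass division closes up within $\cB$. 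From such a preparation one reads off the resplendent/relative version of the condition defining $\omega$-h-minimality: for each $\ell$, definable subsets of $\RV^{\ell}_{\lambda}\times K$ (with extra $\RV$-parameters) are, fiberwise in $x$, controlled by a finite set depending only on the $\RV$-parameters, which is the content of $\omega$-h-minimality.

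The main obstacle is the first step above: showing that Weierstrass preparation/division stays inside the class of terms generated by $\cB$ in the mixed valued-field setting, i.e.\ that dividing an element of $\cB$ (evaluated at $K$-points, some of norm $1$ and some in the maximal ideal) by a Weierstrass-regular element again produces a $\cB$-term, uniformly in parameters. In the real closed case this is handled by Cluckers--Lipshitz via the closure properties of a real Weierstrass system; here one must additionally track the interaction with the valuation $\cO_K$ — separating variables into those that are valuation units and those that are infinitesimal, and organizing the division so that the "infinitesimal" directions are treated like the $t$ in $\RR((t))$ while the "unit" directions are treated analytically. Getting this bookkeeping right, and checking that the "strong" and "rich" hypotheses on $\cB$ suffice for it, is where the real content of the argument lies; once it is in place, deducing $\omega$-h-minimality from the resulting cell-decomposition/preparation statement is a now-standard translation, using the criteria of \cite{CHR}.
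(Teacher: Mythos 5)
Your first reduction does not work. The transfer you invoke would have to pass from tameness of the residue field $\RR$ (trivially valued) back up to $\omega$-h-minimality of the nontrivially valued field $K$, and no such principle exists in \cite{CHR}: the coarsening results there go from a finer valuation to a coarser one, and for a \emph{trivially} valued field the $\omega$-h-minimality condition is vacuous (every ball $0$-next to a point is a singleton), so ``$\RR_{\an}$ is o-minimal, hence $\omega$-h-minimal as a trivially valued field'' carries no information about $K$. The whole difficulty of the theorem is precisely that the analytic functions are evaluated at nonstandard points of $K$ and interact with the valuation; o-minimality of real closed fields with $\cB$-analytic structure \cite{CLip} enters the actual proof only as an ingredient inside the term analysis (e.g.\ to control quantifier-free types and to use $A \preccurlyeq \RR((G))$), not as the engine of a coarsening/residue-field transfer.

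Your second paragraph is the right general direction and matches the paper's two-component structure: valued-field quantifier elimination relative to $\RV_\lambda$ with full induced structure (Proposition~\ref{prop:QE.RV}, in the style of Denef--van den Dries and Flenner), plus $\lambda$-preparation of one-variable $\cL_{\cB}(A)$-terms (Proposition~\ref{prop:term_analysis}). But you explicitly leave open exactly the step where the content lies, namely how Weierstrass division with parameters from $K$ stays inside a Weierstrass system and how the resulting preparation is made definable over $K$. The paper resolves this by: (i) embedding $K$ into $\RR((G))$ with its natural $\cB$-structure (Theorem~\ref{thm:embedding}; this is where richness is used), (ii) adjoining infinitesimal parameters to form a new real Weierstrass system $\cB[A]$ (Proposition~\ref{prop:1.terms}, which is why the definition of Weierstrass division was amended so that $\delta$ is independent of the number of variables of $g$), (iii) analysing terms over the \emph{real closure} via rings of analytic functions on intervals and annuli \`a la Cluckers--Lipshitz--Robinson, writing terms as rational functions times units and strong units whose $\rv_\lambda$ is controlled (Proposition~\ref{prop:term_structure}; this is where strongness is used), and (iv) pulling the preparing set back from $K^{\rcl}$ to an $\cL_{\val}(A)$-definable finite subset of $K$ via a Rolle-type argument (Lemma~\ref{lem:pull_back_from_rcl}). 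Without these steps --- none of which is supplied or sketched in your proposal --- the ``now-standard translation'' at the end has nothing to translate, so the proposal as written has a genuine gap rather than an alternative proof.
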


The terminology of rich Weierstrass systems will be explained in Section~\ref{sec:weierstrass.systems}, but let us point out that all examples of real Weierstrass systems from~\cite{CLip} are rich. 

Our proof strategy consists of two main steps, and is similar in spirit to that of $\omega$-h-minimality for valued fields with separated analytic structure~\cite[Thm. 6.2.1]{CHR}. The first step is a precise analysis of 1-terms, where there is a subtle, yet significant difference compared to~\cite{CHR}. Recall that a main input for \cite[Thm. 6.2.1]{CHR} is a good understanding of 1-terms with parameters in arbitrary models. This follows from an analysis of 1-terms without parameters and a procedure to enlarge the Weierstrass system with constants from the valued field. The latter can be achieved by finding a larger Weierstrass system in which both the valued field and the original Weierstrass system embed, in a compatible way, as in \cite{CLip}. In our setting, the existence of such a larger system is much less clear, unless our field is a Hahn series equipped with the natural analytic structure (Example \ref{ex:structure-natural}).

The Embedding Theorem~\ref{thm:embedding} is our solution to this problem. Essentially, it allows us to reduce to the nice situation mentioned above, and allows one to develop extension of parameters for real Weierstrass systems. We then build on results from \cite{CLRr} by Cluckers, Lipshitz and Robinson for 1-terms without parameters in real closed fields. Since we are working more generally with fields which are not real closed, this requires some work and care. Since we keep precise track of parameters in each step, we are able to first investigate terms over the real closure and then descend this analysis to the original field. This analysis of terms takes up all of Section 3.

The second step of our proof consists of a relative quantifier elimination statement down to the leading term structure. Here, we follow classical methods by Denef--van den Dries~\cite{DvdD} to reduce to the algebraic case, where quantifier elimination relative to the leading term sorts is well-studied (see e.g.\ ~\cite{Basarab,Kuh94,Flen}). This is carried out in Section~\ref{sec:QE}. 
%While we can build on the analysis of 1-terms \emph{without} parameters from \cite{CLR}, there is some work to be done to extend this to include parameters and this takes up a large part of Section \ref{sec:term.analaysis}. 
%A crucial step is the embedding theorem \ref{thm:embedding}, which we use to simultaneously embed our Weierstrass system $\cB$ and our field with analytic structure into larger structures (of their respective types), in a compatible way.
%
%In more detail, our approach is to first analyse terms over the real closure, and then pull back the resulting data to the ground field. The trickiest part in this term analysis is a precise control of parameters, for which we develop extension of parameters for real Weierstrass systems. This term analysis is the subject of Section~\ref{sec:term.analaysis}.

The notion of $\omega$-h-minimality is really a tameness notion relative to the leading term structure $\RV$. As such, the above theorem does not readily give any information about the induced structure on the residue field and the value group. Therefore, our second main result gives a precise description of the induced structure on the residue field and the value group when $K$ is equipped with its natural valuation. We state the following as an exemplar result, but see Theorem~\ref{thm:induced.structure} for more general Weierstrass systems, as defined in Definition~\ref{def:real.weierstrass.system}. We work in a certain three-sorted language $\cL_{\ac, \cB}$ which includes an angular component map $\ac$, see Section~\ref{sec:QE.RF.VG} for a precise description. Denote by $\RR_\an$ the structure $\RR$ in the ring language expanded by symbols for restricted analytic functions.

\begin{theorem}[{{Theorem~\ref{thm:induced.structure}}}]\label{thm:induced.structure.intro}
Let $\cB = \cA((\Omega))$ be the full Weierstrass system over some ordered abelian group $\Omega$, and let $K$ be an almost real closed field with $\cB$-analytic structure. We consider $K$ as a valued field with its natural valuation. Then
\begin{enumerate}
	\item $\Th_{\cL_{\ac, \cB}}(K)$ is $\omega$-h-minimal,
	\item the definable subsets in the residue field are those definable in $\RR_{\an}$,
	\item the definable subsets in the value group are those definable in the language of ordered abelian groups,
	\item the residue field and value group are stably embedded and orthogonal.
\end{enumerate}
\end{theorem}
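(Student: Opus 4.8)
The plan is to deduce the four items from the relative quantifier elimination of Section~\ref{sec:QE} together with the term analysis of Section~\ref{sec:term.analaysis}, specialised to the canonical valuation (whose residue field is $\RR$ and whose value group is the ordered abelian group $\Omega$ attached to $\cB = \cA((\Omega))$). First I would recall that, by construction of the canonical valuation, $\cO_K$ is convex, so Theorem~\ref{thm:real.h.minimal} applies and $\Th_{\cL_{\val,\cB}}(K)$ is $\omega$-h-minimal; item (1) then follows once I check that adding an angular component map $\ac$ and passing to the three-sorted language $\cL_{\ac,\cB}$ does not destroy $\omega$-h-minimality. This is a routine observation: an $\ac$-map refines the $\RV$-structure only by a section of the residue map, and $\omega$-h-minimality is preserved under such a purely $\RV$-sorted expansion (one quotes the stability of $\omega$-h-minimality under expansions of the $\RV$-language from~\cite{CHR}).

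For items (2) and (3) the engine is the relative quantifier elimination: every $\cL_{\ac,\cB}$-definable subset of $K^n$ is, up to the $\RV$-structure, described by terms in the valued-field sort together with formulas in the $\RV$-sorts. Restricting a definable set $X \subseteq K$ to the residue field means intersecting with $\cO_K$ and composing with the residue map; the term analysis of Section~\ref{sec:term.analaysis} shows that a one-variable term, reduced modulo the maximal ideal, becomes an honest restricted analytic function of the residue variable (here the hypothesis that $\cB$ is the \emph{full} Weierstrass system $\cA((\Omega))$ is what guarantees the reduction lands in $\cA$, i.e.\ in the ring of convergent power series underlying $\RR_{\an}$). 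Hence the trace on the residue field of any $\cL_{\ac,\cB}$-definable set is $\RR_{\an}$-definable; the reverse inclusion is immediate since every restricted analytic function is interpreted on $K$ via $\cB$. For item (3), the same relative quantifier elimination shows that the induced structure on $\Omega$ comes only through the valuation map applied to terms; by the term analysis the value of a one-variable term is a piecewise $\ZZ$-linear function of the valuations of the arguments plus constants, so no structure beyond that of ordered abelian groups is induced, and the converse is again trivial.

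Finally, item (4) — stable embeddedness and orthogonality of the residue field and value group — I would derive from (2), (3) and the relative quantifier elimination by a standard argument: a definable subset of a product $\RR^a \times \Omega^b$ is, after eliminating the valued-field quantifiers, a Boolean combination of conditions each involving only the $\RR$-coordinates or only the $\Omega$-coordinates (the $\ac$ and $\val$ components of a term separate cleanly because the residue of a term depends only on the residues of its inputs and the valuation of a term depends only on the valuations of its inputs, up to finitely many threshold cases), whence the induced structure on $\RR^a \times \Omega^b$ is the product structure; stable embeddedness of each sort then follows from (2) and (3) respectively, and orthogonality from the product decomposition. The main obstacle I anticipate is the clean separation of the $\ac$- and $\val$-parts of a term in item (4): a term can mix residue-field and value-group information through the Weierstrass data, and one must use the precise normal form from the term analysis — in particular the control of parameters developed for real Weierstrass systems — to see that after the relative quantifier elimination the two kinds of data genuinely decouple. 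Once that normal form is in hand, (4) is bookkeeping; everything else reduces to citing Theorem~\ref{thm:real.h.minimal}, the relative quantifier elimination of Section~\ref{sec:QE}, and the term analysis of Section~\ref{sec:term.analaysis}.
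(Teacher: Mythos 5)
Your high-level architecture is the same as the paper's: item (1) is obtained exactly as you propose, from Theorem~\ref{thm:real.h.minimal} together with resplendency of $\omega$-h-minimality under expansions by structure on $\RV$ (\cite[Thm.\,4.1.19]{CHR}), since $\ac$ and the two quotient sorts factor through $\RV$; and items (2)--(4) are indeed meant to come from the relative quantifier elimination of Section~\ref{sec:QE}. However, for (2)--(4) your proposal mislocates the key technical input, and as written this is a genuine gap. You attribute to the term analysis of Section~\ref{sec:term.analaysis} two facts it does not provide: that a one-variable term reduced modulo the maximal ideal ``becomes an honest restricted analytic function of the residue variable'', and that the valuation of a term is ``piecewise $\ZZ$-linear'' in the valuations of its arguments. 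Proposition~\ref{prop:term_analysis} only says that $\rv_{\lambda}(\tau(x))$ depends on the tuple $(\rv_{\lambda}(x-c))_{c\in C}$; neither of your two claims is proved anywhere in the paper, and you would have to supply separate arguments for them. Moreover item (2) is a statement about definable subsets of the residue-field \emph{sort}, not about images under the residue map of definable subsets of $K$, so the route via ``intersecting with $\cO_K$ and composing with $\res$'' is aimed at the wrong statement.

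What actually does the work is the choice of the three-sorted language in Section~\ref{sec:QE.RF.VG}: the residue field carries $\cL_{\cB'}$, where $\cB'$ is generated by the top slices of units of $\cB$ (and $\cB'=\cA((0))$, i.e.\ exactly $\RR_{\an}$, when $\cB$ is the full system $\cA((\Omega))$), the value group carries only $\cL_{\oag}$, and the angular component is chosen \emph{compatible} with $\cB$ --- the existence of such an $\ac$ needs the section-extension Lemma~\ref{lem:extend_section}, a point your plan does not address. Once Proposition~\ref{prop:QE.RF.VG} is available in this language, (2), (3) and (4) follow at once: after eliminating valued-field quantifiers, a formula with free variables in $\RF^a\times \VG^b$ is a Boolean combination of $\cL_{\cB'}$-formulas in the $\RF$-variables (with parameters of the form $\ac(t(a))\in\RR$) and $\cL_{\oag}$-formulas in the $\VG$-variables (with parameters $\norm{t(a)}$), which simultaneously yields stable embeddedness, orthogonality, and the identification of the induced structures. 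The ``decoupling'' of $\ac$- and valuation-data that you flag as the main obstacle is handled inside the proof of the quantifier elimination, via the identities $\ac(v(x))=v_0(\ac(x))$ and $\norm{v}=1$ for units $v$ of $\cB$, not via the normal form of Section~\ref{sec:term.analaysis}. So your plan becomes correct once you replace the appeals to the term analysis by the top-slice language $\cL_{\cB'}$, the compatible $\ac$, and Proposition~\ref{prop:QE.RF.VG}.
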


For more general Weierstrass systems $\cB$, one will have a different structure on the residue field in which there may be less analytic functions. In short, this proof follows from a relative quantifier elimination result in the language $\cL_{\ac, \cB}$. %Note that in this language $\cL_{\ac, \cB}$, the theory of $K$ is also $\omega$-h-minimal, as follows from resplendency~\cite[Thm.\,4.1.19]{CHR}.

From this relative quantifier elimination, we also obtain an Ax--Kochen--Ersov type result. If $K$ and $K'$ are almost real closed fields considered with the natural valuation, then the classical Ax--Kochen--Ersov theorem tells us that $K$ and $K'$ are elementarily equivalent as valued fields if and only if their value groups are elementarily equivalent. Indeed, the residue fields are both equal to $\RR$. The next result can be seen as an extension of this, where we equip $K$ and $K'$ with $\cB$-analytic structure. Denote by $\cL_{\oag}$ the language of ordered abelian groups $\{0,+,<\}$.

\begin{theorem}[{{Theorem~\ref{thm:AKE}}}]\label{thm:AKE.intro}
Let $\cB$ be a real Weierstrass system which is strong and rich, and let $K$ and $K'$ be almost real closed fields with $\cB$-analytic structure. We consider $K$ and $K'$ as valued fields with their natural valuation, and denote their value groups by $G$ and $G'$.
Then %$K$ and $K'$ can be equipped with an angular component map such that 
\[
K\equiv_{\cL_{\ac, \cB}} K' \text{ if and only if } G\equiv_{\cL_{\oag}} G'.
\]
\end{theorem}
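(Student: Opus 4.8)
The plan is to deduce Theorem~\ref{thm:AKE.intro} from the relative quantifier elimination in the three-sorted language $\cL_{\ac, \cB}$ that underlies Theorem~\ref{thm:induced.structure.intro}, together with the description of the induced structure on the residue field and the value group. The forward implication is trivial: the value group $G$ is an $\cL_{\oag}$-definable (indeed interpretable) sort inside $K$ viewed as an $\cL_{\ac, \cB}$-structure, so any $\cL_{\ac, \cB}$-elementary equivalence $K \equiv K'$ restricts to an $\cL_{\oag}$-elementary equivalence $G \equiv G'$. The content is the converse.

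For the converse, I would argue as follows. Assume $G \equiv_{\cL_{\oag}} G'$. By the relative quantifier elimination result (the engine behind Theorem~\ref{thm:induced.structure.intro}), every $\cL_{\ac, \cB}$-formula in the valued-field sort is equivalent, modulo the theory of almost real closed fields with $\cB$-analytic structure and their canonical valuation, to a formula built from: (i) formulas living purely in the residue field sort, interpreted in $\RR_{\an}$; (ii) formulas living purely in the value group sort, interpreted in $\cL_{\oag}$; glued together via the angular component map $\ac$ and the valuation map. Since the residue field of both $K$ and $K'$ is $\RR$ equipped with exactly the $\RR_{\an}$-structure (by part (2) of Theorem~\ref{thm:induced.structure.intro}, in its general-$\cB$ form of Theorem~\ref{thm:induced.structure}), the residue-field parameters contribute identically to both fields. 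By part (3), the value-group sort carries only the pure ordered abelian group structure, so the value-group parameters contribute in a way controlled entirely by $\Th_{\cL_{\oag}}(G) = \Th_{\cL_{\oag}}(G')$. The orthogonality and stable embeddedness statement in part (4) is exactly what licenses evaluating a sentence of the glued form by independently evaluating its residue-field part in $\RR_{\an}$ and its value-group part in $G$ (resp.\ $G'$), with no interaction terms surviving. Hence every $\cL_{\ac, \cB}$-sentence true in $K$ is true in $K'$, giving $K \equiv_{\cL_{\ac, \cB}} K'$.

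To make the gluing step rigorous I would state, as a corollary of the relative quantifier elimination, a normal form for $\cL_{\ac, \cB}$-sentences: every such sentence is a Boolean combination of sentences of the form $\varphi^{\RF}$ and $\psi^{\VG}$, where $\varphi$ is an $\RR_{\an}$-sentence in the residue field and $\psi$ is an $\cL_{\oag}$-sentence in the value group. This is precisely the Feferman--Vaught style statement implied by orthogonality plus stable embeddedness of the two auxiliary sorts. Once this normal form is in hand, the theorem is immediate: the truth value of $\varphi^{\RF}$ is the same in $K$ and $K'$ because both have residue field $\RR_{\an}$, and the truth value of $\psi^{\VG}$ is the same by the hypothesis $G \equiv_{\cL_{\oag}} G'$.

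The main obstacle, as is typical for Ax--Kochen--Ersov arguments, is not this deduction but ensuring that the relative quantifier elimination is genuinely stated in a form strong enough to produce the Feferman--Vaught normal form — in particular that the angular component map and any auxiliary predicates needed for the elimination are themselves expressible without reintroducing hidden interaction between the residue field and value group. Concretely, one must check that the parameters appearing in the eliminated formulas, and the coefficients coming from terms in $\cB$ (whose control is the delicate part of the term analysis referenced for Theorem~\ref{thm:real.h.minimal}), end up sorted cleanly into the $\RF$-part and the $\VG$-part; the richness and strength hypotheses on $\cB$, together with the use of the \emph{full} Weierstrass system picture from Theorem~\ref{thm:induced.structure}, are what guarantee this. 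Assuming Theorem~\ref{thm:induced.structure} is available in the needed generality, the proof of Theorem~\ref{thm:AKE.intro} is then essentially formal.
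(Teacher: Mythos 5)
Your proposal is correct and follows essentially the same route as the paper: the forward direction is immediate, and the converse is deduced from the relative quantifier elimination of Proposition~\ref{prop:QE.RF.VG} together with the description of the induced structures from Theorem~\ref{thm:induced.structure}, with orthogonality and stable embeddedness giving exactly the Feferman--Vaught style splitting of sentences into an $\RF$-part and a $\VG$-part that you describe. The one point to make explicit is the compatibility assumption $\ac(t^\omega)=1$ (stated in the body version, Theorem~\ref{thm:AKE}, and discussed in Remark~\ref{rem:ac.cB.not.compatible}), which is what ensures the contributions of closed $\cL_{\cB}$-terms sort cleanly into the two auxiliary sorts, precisely the concern you flagged at the end.
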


\subsection{Acknowledgements} The authors thank Raf Cluckers, Pierre Touchard, and Neer Bhardwaj for interesting discussions related to this paper. We thank Neer Bhardwaj for sharing a preprint around analytic Ax--Kochen--Ersov theory. 

The author F.V.\ is supported by F.W.O.\ Flanders (Belgium) with grant number 11F1921N. The author K.H.N\ is supported by F.W.O.\ Flanders (Belgium) with grant number 1270923N.

\section{Preliminaries}

\subsection{Weierstrass systems}\label{sec:weierstrass.systems}

We recall and adapt the notion of real Weierstrass systems, and fields equipped with real analytic structure. Let $n\geq 0, \alpha\in \RR_{>0}$ and let $A_{n, \alpha}$ be the ring of real power series in $\RR[[\xi_1, \ldots, \xi_n]]$ with radius of convergence strictly larger than $\alpha$. Recall that an element $f\in A_{n, \alpha}$ is \emph{regular in $\xi_1$ of degree $s$ at $0$} if we can write $f(\xi_1, 0, \ldots, 0, 0) = a\xi_1^s + \text{higher degree terms}$, where $a\neq 0$. Let $\Omega$ be some non-zero ordered abelian group. We define $\RR(\Omega)$ to be the field of rational functions in $t^\omega, \omega\in \Omega$, we define
\[
\RR((\Omega)) = \left\{ \sum_{i\in I} a_it^i \mid a_i\in \RR, I\subset \Omega \text{ well-ordered}\right\},
\]
and
\[
A_{n,\alpha}((\Omega)) = \left\{ \sum_{i\in I} f_it^i \mid f_i\in A_{n, \alpha}, I\subset \Omega \text{ well-ordered}\right\}.
\]
Note that $A_{0, \alpha}((\Omega)) = \RR((\Omega)) \subset A_{n, \alpha}((\Omega))$. We use the notation $\norm{\cdot}$ for the Gauss norm on $\RR((\Omega))$, where
\[
\bignorm{\sum_{i\in I}a_it^i} = t^{i_0}
\]
where $i_0 = \min \{i\mid i\in I, a_i\neq 0\}$. This norm extends naturally to $A_{n, \alpha}((\Omega))$ and will also be denoted by $\norm{\cdot}$. If $\norm{\sum_{g\in I} f_gt^g} = t^{g_0}$ then we call $f_{g_0}$ the \emph{top slice} of $f$. If $\norm{f} = 1$, then we call $f$ \emph{regular in $\xi_1$ of degree $s$ at $0$} if $f_0$ is regular in $\xi_1$ of degree $s$ at $0$.

We recall the notion of a real Weierstrass system from~\cite[Def.\,3.1.1]{CLip}. Our definition is in fact slightly different from~\cite{CLip}, and the differences will be explained in more detail below.

\begin{defn}\label{def:real.weierstrass.system}
Let $\cB = \{B_{n,\alpha} \mid n\in \NN, \alpha \in \RR_{>0}\}$ be a family of $\RR$-algebras with
\[
\RR[\xi_1, \ldots, \xi_n]\subset B_{n, \alpha} \subset A_{n, \alpha}((\Omega)).
\]
The family $\cB$ is a \emph{real Weierstrass system (over $\Omega$)} if the following hold:
\begin{enumerate}
\item \label{it:ax_pt1} \begin{enumerate}
	\item \label{it:ax_radii} If $m\leq m'$ and $\alpha'\leq \alpha$ then $B_{m, \alpha}\subset B_{m', \alpha'}, B_{0, \alpha} = B_{0, \alpha'} =: B_0$.
	\item \label{it:ax_coeff} If $f\in B_{m+n, \alpha}$ and $f = \sum_{\mu} f_\mu(\xi_1, \ldots, \xi_m)\eta^\mu$, where $\eta = (\xi_{m+1}, \ldots, \xi_{m+n})$, then the $f_\mu$ are in $B_{m, \alpha}$.
	\item \label{it:ax_rescaling} If $f\in B_{m, \alpha}, a\in (-\alpha, \alpha)^n\cap \RR^n$, and $r\in \RR_{>0}$, then $f(r\cdot \xi+a)$ is an element of $B_{m, \delta}$ with $\delta = \min\left\{ \frac{\alpha-a}{r}, \frac{\alpha+a}{r}\right\}$. \label{it:rescale}
	\item \label{it:ax_norm} If $f\in B_{n, \alpha}$ then there is some $a\in B_0$ such that $af\in B_{n, \alpha}$ satisfies $\norm{af} = 1$.
	\item \label{it:ax_permutation} If $f\in B_{n, \alpha}$ and $\sigma: \{1, \ldots, n\}\to \{1, \ldots, n\}$ is a permutation then $f(\xi_{\sigma(1)}, \ldots, \xi_{\sigma(n)})$ is in $B_{n, \alpha}$.
	\end{enumerate}
\item \emph{Weierstrass division:} \label{it:ax_Wdiv} If $f\in B_{n, \alpha}$ with $\norm{f} = 1$ is regular in $\xi_1$ of degree $s$ at $0$, then there is a $\delta\in \RR_{>0}$ such that for every $m\geq n$ and every $g\in B_{m, \alpha}$ there exist unique $Q\in B_{m, \delta}$ and $R_0, \ldots, R_{s-1}\in B_{m-1, \delta}$ with $\norm{Q}, \norm{R_i}\leq \norm{g}$ such that
\[
g = Qf + R_0(\xi') + R_1(\xi')\xi_1 + \ldots + R_{s-1}(\xi')\xi_1^{s-1},
\]
where $\xi' = (\xi_2, \ldots, \xi_m)$, and where $f$ is considered in $B_{m, \alpha}$ via the inclusion $B_{n, \alpha}\subset B_{m, \alpha}$.
\end{enumerate}

If additionally $\cB$ satisfies the following condition, then we say that $\cB$ is a \emph{strong Weierstrass system}
\begin{enumerate}
\item[(3)] \label{it:ax_strong} If $f(\xi, \eta_1, \eta_2)\in B_{n+2, \alpha}$ then there are $f_1(\xi, \eta_1, \eta_3), f_2(\xi, \eta_2, \eta_3)$ and $Q(\xi, \eta_1, \eta_2, \eta_3)\in B_{n+3, \alpha}$ such that 
\[
f(\xi, \eta_1, \eta_2) = f_1(\xi, \eta_1, \eta_3) + \eta_2 f_2(\xi, \eta_2, \eta_3) + Q\cdot (\eta_1 \eta_2 - \eta_3).
\]
\end{enumerate}

If additionally $\cB$ satisfies the following condtion, then we say that $\cB$ is a \emph{rich Weierstrass system}
\begin{enumerate}
	\item[(4)] \label{it:ax_rich} $\RR(\Omega)$ is a subring of $B_0$.
\end{enumerate}
Denote by $B_{n,\alpha}^{\circ}$ the ring of $f \in B_{n,\alpha}$ such that $\norm{f}  \leq 1$ and by $B_{n,\alpha}^{\circ \circ} $ the ideal consisting of $f$ with $\norm{f} <1$. 
\end{defn}
\begin{example}\label{ex:full_Weierstrass}
Consider the system of algebras with $B_{n,\alpha} = A_{n,\alpha}((\Omega))$ for each $\alpha,n$.
This is a (rich and strong) real Weierstrass system~\cite[Rem.~3.13(iv)]{CLip}.
It is called the \emph{full Weierstrass system over $\Omega$} and denoted by $\cA((\Omega))$. 
\end{example}

Our definition differs in two ways from~\cite{CLip}, so let us indicate the changes. 
First, axiom 1(e) is new and simply tells us that we can permute the variables of the elements of a Weierstrass system. 
This seems to be necessary to ensure that composition is well-defined and is also present in the analogous definition of separeted Weierstrass system \cite[Def.~A.1.1]{CLips}. Second, in the Weierstrass division axiom the value $\delta\in \RR_{>0}$ does not depend on $m$, the number of variables of $g$. This will be important for us when expanding Weierstrass systems in Section~\ref{sec:extending.weierstrass}. The richness condition is new, and will be needed for us when embedding a field with real analytic structure into a Hahn series field. 

The notion of a strong Weierstrass system is also present in~\cite[Def.\,3.1.1]{CLip} and will be needed in Section~\ref{sec:term.real.closed} when analysing terms on the real closure. It roughly allows one to write $f(x,1/x) = g(x) + (1/x) h(1/x)$.

It is important to note that all natural examples of real Weierstrass systems from~\cite{CLip} satisfy the axioms above, in particular they are also real Weierestrass systems according to our definition. Indeed, in the Weierstrass division axiom the value radius $\delta$ depends only on $f$, and not on $g$, as follows from~\cite[Thm.\, ~II.1, p.\ ~80]{gunning_rossi}. Additionally, all of these examples satisfy the richness condition (\ref{it:ax_rich}). Since our notion of a real Weierstrass system is more restrictive, all results of~\cite{CLip} about them still hold in our context.

We also need the notion of a field with real analytic structure. 

\begin{defn}\label{def:real.analytic.structure}
Let $\cB$ be a real Weierstrass system and let $K$ be an ordered field containing $B_0$ as an ordered subfield. For each $n\in \NN, \alpha\in \RR_{>1}$ let $\sigma_{n, \alpha}$ be an $\RR$-algebra morphism from $B_{n, \alpha}$ to the ring of $K$-valued functions on $[-1,1]_K^n$, compatible with the inclusions $B_{n, \alpha}\subset B_{n, \beta}$ when $\beta<\alpha$ and respecting condition (1)(c) from Definition~\ref{def:real.weierstrass.system}. Let $\sigma_n$ be the induced map on $\cup_{\alpha>1} B_{n, \alpha}$. Assume that the maps $\sigma_n$ satisfy
\begin{enumerate}
\item $\sigma_0$ is the inclusion $B_0\to K$, 
\item $\sigma_m(\xi)$ is the $i$-th coordinate function on $[-1,1]_K^m$,
\item $\sigma_{m+1}$ extends $\sigma_m$, where we identify functions on $[-1, 1]_K^m$ with functions on $[-1,1]_K^{m+1}$ that do not depend on the last variable,
\item $\sigma_m$ respects permutation of coordinates.
\end{enumerate}
Then we say that $K$ \emph{has real analytic $\cB$-structure (via $\sigma = \{\sigma_{n, \alpha}\}$)}.
\end{defn}
\begin{example}[{\cite[Section 3.3]{CLip}}] \label{ex:structure-natural}
\begin{enumerate}
	\item For $\Omega = \{0\}$ and $B_{n,\alpha} =  A_{n,\alpha}((0)) = A_{n,\alpha}$, we have a natural $\cB$-analytic structure on $\RR$, which is just the subanalytic structure $\RR_{\an}$.
	\item If $\Omega \leq G$, then for each Weierstrass system $\cB$ over $\Omega$, $\RR((G))$ carries a canonical $\cB$-analytic structure, as considered in \cite{DvdD}.
	Evaluation of some $f(x) \in B_{n,\alpha}$ at $a \in \RR((G))^n$ can be understood either as substituting $x$ by $a$ in the power series $f(x) \in \RR((G))[[x]]$, or equivalently via iterated Weierstrass division by the $(x_i - a_i)$ in the full Weierstrass system $\cA((G))$ (which naturally contains $\cB$). We refer to this as the \emph{natural} $\cB$-analytic structure on $\RR((G))$.
	\item For $\Omega = \QQ$ and
	\[ B_{n,\alpha} = \{ \sum_{i} t^{\gamma_i} f_{\gamma_i} \mid  f_{\gamma_i} \in A_{n,\alpha}  ,\QQ \ni \gamma_i \to \infty \}, \]
	the completion of the field of real Puiseux series has a natural $\cB$-analytic structure~\cite{CLRr}.
\end{enumerate}
\end{example}

We collect some general facts about Weierstrass systems and ordered fields with $\cB$-analytic structure.
For the remainder of this section, let $K$ be an ordered field with real analytic structure.
Since $B_0$ contains $\RR$ and is contained in $K$ via the analytic $\cB$-structure, the field $\RR$ naturally sits inside $K$.

\begin{lem} \label{le:unit}
If $U(x) \in B_{m,\alpha}$ such that $\norm{U} = 1$ and $U_0(0) \neq 0$, then there exists some $\delta > 0$ such that $U(x) \in B_{m,\delta}^{\times}$ 
\end{lem}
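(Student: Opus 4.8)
The plan is to note that the hypotheses put $U$ exactly in position to apply Weierstrass division in the degenerate case $s=0$, where the division identity collapses to plain divisibility, and then to divide $1$ by $U$.

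First I would check that $U$ is regular in $\xi_1$ of degree $0$ at $0$, in the sense of the definition preceding Definition~\ref{def:real.weierstrass.system}. Since $\norm{U}=1$, the top slice of $U$ is precisely $U_0\in A_{m,\alpha}$, and by that definition $U$ is regular in $\xi_1$ of degree $s$ at $0$ exactly when $U_0$ is. Now $U_0(\xi_1,0,\ldots,0) = U_0(0) + (\text{higher degree terms in }\xi_1)$ with $U_0(0)\neq 0$ by hypothesis, so $U_0$, and hence $U$, is regular in $\xi_1$ of degree $0$ at $0$.

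Next I would apply the Weierstrass division axiom (2) of Definition~\ref{def:real.weierstrass.system} to $U$ with $s=0$. This produces a $\delta\in\RR_{>0}$ (depending only on $U$) such that, taking $m'=m$, every $g\in B_{m,\alpha}$ can be written $g=Q_gU$ with $Q_g\in B_{m,\delta}$ and $\norm{Q_g}\leq\norm{g}$; the key point is that for $s=0$ there are no remainder terms $R_0,\ldots,R_{s-1}$, so the division identity is simply divisibility by $U$ inside the Weierstrass ring. Applying this with $g=1\in\RR[\xi_1,\ldots,\xi_m]\subset B_{m,\alpha}$ (for which $\norm{1}=1$) yields $Q\in B_{m,\delta}$ with $1=QU$, so $U\in B_{m,\delta}^{\times}$ with inverse $Q$.

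There is no real obstacle here beyond bookkeeping: one only needs to be careful that "regular of degree $0$" is admissible in the definition, and that the division axiom then delivers an inverse lying in the Weierstrass ring $B_{m,\delta}$ itself (rather than merely in some ambient ring of convergent series). The uniqueness clause of the division axiom plays no role.
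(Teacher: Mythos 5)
Your proposal is correct and coincides with the paper's argument: the paper likewise notes that $U$ is regular of degree zero (because $\norm{U}=1$ and $U_0(0)\neq 0$) and obtains the inverse by Weierstrass dividing $1$ by $U$, citing \cite[Rem.~3.1.4(ii)]{CLip}. Your more explicit bookkeeping of the $s=0$ case of axiom (2) is just an expansion of the same idea.
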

\begin{proof}
This is part of \cite[Rem.~3.1.4(ii)]{CLip} and follows by Weierstrass dividing $1$ by $U(x)$, noting that the latter is regular of degree zero (in any variable).
\end{proof}
The following lemma is a real version of the implicit function theorem from \cite[Rem.~1.3(4)]{DenLip} and is proved in a similar manner.
\begin{lem} \label{lem:implicit}
Let $f(x,y) \in B_{m+1,\alpha}$ for certain $\alpha,m$ such that $\norm{f} = 1$ and $f(x,y)$ is regular of degree 1 in $y$.
Then there exist some $\delta \in \RR_{>0}$ and $r(x) \in B_{m,\delta}$ such that the composition $f(x,r(x))$ is well-defined in $B_{m,\delta}$ and $f(x,r(x)) = 0$.
\end{lem}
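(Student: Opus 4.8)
The plan is to follow the standard strategy for implicit functions in the Weierstrass-division setting: reduce the equation $f(x,y)=0$ to a linear-in-$y$ equation via Weierstrass division, and then solve that explicitly. Concretely, since $\norm{f}=1$ and $f$ is regular of degree $1$ in $y$, the Weierstrass division axiom~(\ref{it:ax_Wdiv}) applied with $g = f$ itself (or more efficiently, with the distinguished polynomial obtained from dividing $y$ by $f$) gives a $\delta_0 \in \RR_{>0}$ and a factorization controlling $f$ up to a unit. First I would divide $g(x,y) = y$ by $f$: this yields $y = Q(x,y)f(x,y) + R_0(x)$ with $Q \in B_{m+1,\delta_0}$, $R_0 \in B_{m,\delta_0}$, and $\norm{Q}, \norm{R_0} \le \norm{y} = 1$. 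Setting $r(x) := R_0(x)$ is the natural candidate: formally, on the zero set of $f$ one has $y = R_0(x)$.

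The second step is to verify that $r(x) = R_0(x)$ indeed satisfies $f(x, r(x)) = 0$ after shrinking $\delta$. For this I would substitute $y = r(x)$ into the division identity to get $r(x) = Q(x, r(x)) f(x, r(x)) + r(x)$, hence $Q(x,r(x)) f(x,r(x)) = 0$ in $B_{m,\delta}$ for a suitable $\delta \le \delta_0$ (one must check the composition is well-defined, using the bound $\norm{r} \le 1$ together with axiom~(1)(c) on rescaling/composition, which requires $r$ to map $[-1,1]$ into the domain of radius $>\alpha$; shrinking $\delta$ handles this). It then remains to see that $Q(x, r(x))$ is a unit in $B_{m,\delta}^\times$, so that we may cancel it. To get unit-ness, I would examine the top slice at $0$: since $f$ is regular of degree $1$ in $y$, writing $f_0(x,y) = a y + (\text{higher order})$ with $a = \partial_y f_0(0,0) \ne 0$, differentiating the identity $y = Q f + R_0$ with respect to $y$ and evaluating appropriately shows $Q_0(0,0) \cdot a = 1 - \partial_y R_0\big|_{\ldots}$; a short computation of the constant term gives $Q_0(0) \ne 0$. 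Then $Q(x, r(x))$ has Gauss norm $1$ and nonvanishing top-slice constant term, so Lemma~\ref{le:unit} applies and $Q(x,r(x)) \in B_{m,\delta}^\times$ after one more shrink of $\delta$. Cancelling gives $f(x, r(x)) = 0$.

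The main obstacle I anticipate is not the algebra of the division step but the bookkeeping of radii and the well-definedness of the composition $f(x, r(x))$ as an element of a Weierstrass system (as opposed to merely as a function): one needs $\norm{r} \le 1$ together with the rescaling/coordinate-change axiom~(1)(c) to guarantee that plugging $r(x)$ into $f$ lands in some $B_{m,\delta}$, and the same for $Q(x, r(x))$. The uniform-in-$m$ form of Weierstrass division in Definition~\ref{def:real.weierstrass.system}(\ref{it:ax_Wdiv}) is exactly what makes the successive substitutions stay within the system with a single controlled $\delta$. A secondary technical point is that the top-slice argument for $Q_0(0) \ne 0$ should be phrased in terms of the residue ring modulo $B_{m,\alpha}^{\circ\circ}$, mirroring the proof of Lemma~\ref{le:unit}; alternatively one can avoid it entirely by invoking the real implicit function theorem of~\cite[Rem.~1.3(4)]{DenLip} directly to produce $r$ and then checking membership in $\cB$ via the division identity. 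Either way, the analytic content is classical; the work is in the radius control, which the amended axioms are designed to provide.
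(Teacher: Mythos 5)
Your proposal is correct and takes essentially the same route as the paper: Weierstrass-divide $y$ by $f$ to get $y = Q(x,y)f(x,y) + r(x)$, take the remainder $r(x)$ as the implicit function, and show the quotient is a unit by comparing coefficients of $y$ and invoking Lemma~\ref{le:unit}, so that substituting $y=r(x)$ forces $f(x,r(x))=0$. The only detail the paper makes explicit that you gloss over is that well-definedness of the compositions $f(x,r(x))$ and $Q(x,r(x))$ in some $B_{m,\delta}$ rests on $r_0(0)=0$ (obtained by comparing constant terms in the division identity), not merely on $\norm{r}\leq 1$ and rescaling.
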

\begin{proof}
We follow the proof of \cite[Rem.~1.3(4)]{DenLip}.
By Weierstrass division, we may write
\begin{equation} \label{eq:implicit}
	y = f(x,y) U(x,y) + r(x)
\end{equation}  
inside some $B_{m+1,\delta}$.
Comparing the coefficients of $y$ in the above equality of power series, we find that $U_0(0,0) \neq 0$, in particular $U(x,y)$ is a unit by Lemma \ref{le:unit}, up to shrinking $\delta$.
Then, by comparing constant terms on both sides, it follows that $r_0(0) = 0$.
This ensures that, up to further shrinking $\delta$, that $f(x,r(x))$ and $U(x,r(x))$ are well-defined elements of $B_{m,\delta}$.
Since the latter is a unit, it follows from (\ref{eq:implicit}) that $f(x,r(x)) = 0$.
\end{proof}
The proof idea of the lemma below is classical, see e.g.\ ~\cite[Lem.~2.4]{DMM}.
In this more abstract context, some additional input from Lemma~\ref{lem:implicit} is required, as well as some care about radii of convergence.
\begin{lem} \label{le:order-preserving}
Let $f(x) \in B_{m,\alpha}$ with $\alpha > 1$ and such that $\norm{f} = 1$ and $f_0(0) > 0$.
There exists some $\delta \in \RR_{>0}$ such that if $K$ is an ordered field with $\cB$-analytic structure then $\sigma(f)(a) > 0$ for all $a$ in $[-\delta,\delta]_K$.
\end{lem}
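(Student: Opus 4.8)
The plan is to reduce the statement to the fact that a real analytic function near the origin inherits the sign of its value at $0$, but done carefully at the level of Weierstrass systems so that the radius $\delta$ is uniform in the field $K$. First I would use axiom (1)(d) to arrange $\norm{f}=1$ (already assumed), and note that $f_0(0)>0$ means $f$ is regular of degree $0$ in, say, the variable $x_1$; more to the point, write $f = f_0 + (f - f_0)$ where $f_0$ is the top slice, so $f_0 \in A_{m,\alpha}$ is an honest convergent real power series with $f_0(0) > 0$. The classical fact is then that $f_0$, being continuous on a neighbourhood of $0$ in $\RR^m$, is positive on some box $[-\delta_0,\delta_0]^m_\RR$; but I need this to transfer to the field $K$, where $f$ may genuinely differ from its top slice.

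The key step is to produce, inside some $B_{m,\delta}$, a \emph{square root} of $f$. Concretely, consider the two-variable function $g(x,y) = y^2 - f(x) \in B_{m+2,\alpha}$ (using axiom (1)(a) to include $B_{m,\alpha}$ and the polynomial ring). Rescale in the $y$-variable by $y \mapsto c + y$ where $c = \sqrt{f_0(0)} \in \RR_{>0}$, legitimate by axiom (1)(c); the resulting function $g(x, c+y)$ has top slice $y^2 + 2cy + (f_0(x) - f_0(0))$, whose value at $(0,0)$ is $0$ and whose $y$-derivative at $(0,0)$ is $2c \neq 0$, so after multiplying by a unit to normalise the Gauss norm (axiom (1)(d)) it is regular of degree $1$ in $y$. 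Now apply Lemma~\ref{lem:implicit}: there exist $\delta \in \RR_{>0}$ and $r(x) \in B_{m,\delta}$ with $(c + r(x))^2 = f(x)$ as elements of $B_{m,\delta}$, and moreover $r_0(0) = 0$. Shrinking $\delta$ further (using Lemma~\ref{le:unit} applied to $c + r(x)$, which has top-slice value $c \neq 0$ at $0$), we may assume $\sigma(c+r)(a)$ is defined and nonzero for all $a \in [-\delta,\delta]_K$, hence $\sigma(f)(a) = (\sigma(c+r)(a))^2 > 0$ for all such $a$ in the ordered field $K$, since squares are nonnegative and it is nonzero. Crucially, the $\delta$ produced by Lemma~\ref{lem:implicit} depends only on $f$ (and the Weierstrass system), not on $K$, so it is uniform as required.

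The main obstacle I anticipate is the bookkeeping around the Gauss norm and regularity when setting up $g(x,c+y)$: one must check that after the rescaling $y \mapsto c+y$ the function still lies in some $B_{m+2,\delta'}$ with $\delta' > 1$ (or at least that the construction goes through over a box large enough to contain $[-1,1]$ in the relevant variable range — actually only the $x$-box matters for the final statement, and we can take $\delta \le 1$), and that normalising its Gauss norm to $1$ via axiom (1)(d) does not destroy the degree-$1$ regularity in $y$ — it does not, since multiplying by an element of $B_0$ only rescales, and the condition $f_0(0)=0$, $\partial_y f_0(0)\neq 0$ is preserved. A secondary subtlety is ensuring the composition $f(x, r(x))$-type expressions are genuinely well-defined in the Weierstrass system (not merely formally), which is exactly the content built into the conclusion of Lemma~\ref{lem:implicit} together with a further application of Lemma~\ref{le:unit}; so in fact both obstacles are already handled by the two preceding lemmas, and the remaining work is the elementary observation that in an ordered field a nonzero square is positive.
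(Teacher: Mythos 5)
Your proposal is sound in outline and rests on the same two pillars as the paper's proof (the implicit-function Lemma~\ref{lem:implicit} to manufacture a square root, and non-negativity of squares in an ordered field), but it is implemented differently. The paper never extracts a square root of $f$ itself: it builds a square root $q$ of the single one-variable function $x+\varepsilon^2$, shifted only by a \emph{small} $\varepsilon$ with $\varepsilon^2<f_0(0)$, composes it with $g(x)=f(x)-f(0)$, rescales the variable so the composite lands in $B_{1,2}$, and concludes $\sigma(f)(a)\geq \sigma(f(0))-\varepsilon^2$, with strictness coming from order-preservation of $\sigma$ on $B_0$ and $f(0)>\varepsilon^2$ in $B_0$. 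Your route, writing $f=(c+r)^2$ with $c=\sqrt{f_0(0)}$ and using Lemma~\ref{le:unit} to see that $c+r$ is a unit, gets strict positivity directly as a nonzero square and avoids the $\varepsilon$-bookkeeping, which is a genuine simplification of the final step.

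Two points in your version need patching, both of which the paper's arrangement sidesteps. First, the translation $y\mapsto y+c$ is licensed by axiom~(1)(c) only when $|c|<\alpha$, and $c=\sqrt{f_0(0)}$ may exceed $\alpha$ (e.g.\ $f$ a large positive constant); this is easily fixed by first replacing $f$ with $\rho f$ for a small real $\rho>0$, which changes neither $\norm{f}=1$ nor the desired positivity, but it should be said — the paper shifts only by the small $\varepsilon$ and so never meets this issue. Second, $\sigma$ only interprets $B_{m,\beta}$ with $\beta>1$ as functions on $[-1,1]_K$, so ``$\sigma(c+r)(a)$ for $a\in[-\delta,\delta]_K$'' with $\delta\leq 1$ only makes sense after precomposing with a rescaling such as $x\mapsto \delta x/2$; this is exactly the paper's $q(g(x/(2\delta)))\in B_{1,2}$ step, and your parenthetical ``we can take $\delta\le1$'' glosses over it. Finally, note that the statement of Lemma~\ref{lem:implicit} does not assert $r_0(0)=0$ (that is in its proof), but you do not actually need it: multiplicativity of the Gauss norm applied to $(c+r)^2=f$ gives $\norm{c+r}=1$ and $((c+r)_0(0))^2=f_0(0)\neq 0$, which is all that Lemma~\ref{le:unit} requires. (Minor slips: your $g(x,y)=y^2-f(x)$ lives in $B_{m+1,\alpha}$, not $B_{m+2,\alpha}$, and the constant term of the shifted top slice is $f_0(0)-f_0(x)$, not $f_0(x)-f_0(0)$; neither affects the argument.)
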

\begin{proof}
Take $\varepsilon \in \RR_{>0}$ and consider the polynomial $h_{\varepsilon}(x,y)  = x + \varepsilon^2 - (y + \varepsilon)^2$.
From Lemma \ref{lem:implicit}, we deduce the existence of some $r_{\varepsilon}(x) \in B_{1,\varepsilon}$ such that $(r_{\varepsilon}(x) + \varepsilon)^2 = x + \varepsilon^2$, as an element of some $B_{1,\delta}$.
Now fix a $\varepsilon \in \RR_{>0}$ such that $f_0(0) > \varepsilon^2$ and let $q(x) = r_{\varepsilon}(x) + \varepsilon $. 
For $g(x) = f(x) - f(0)$, the composition $q(g(x))$ is a well-defined element of $B_{1,\delta}$, up to shrinking $\delta$.
Now $q(g(x/(2 \delta))) \in B_{1,2}$ and by construction, it holds for all $a \in [-1,1]_K$ that
\[ (\sigma(q \circ g \circ (x/(2 \delta)) (a))^2 = \sigma(f) (a/(2\delta)) - \sigma(f(0))  +  \varepsilon^2  .\]
Since squares in $K$ are non-negative, it follows that $\sigma(f)(a/(2\delta)) \geq \sigma(f(0)) - \varepsilon^2$.
As $\sigma \colon B_0 \to K$ preserves the order and $f(0) > \varepsilon^2$ by construction, it follows that $\sigma(f)$ takes only strictly positive values on $[-\delta/2,\delta/2]$.
\end{proof}
The above lemma implies that the interpretation map $\sigma$ is continuous with respect to the Gauss norm on $\cB$ and the supremum norm on $K$.
\begin{lem} \label{le:automatic_continuity}
Let $\alpha \in \RR_{>1}$.
For any $f(x) \in B_{m,\alpha}$ with $\norm{f} \leq 1$ it holds that $\norm{(\sigma f)(a) } \leq 1$, for all $a \in [-1,1]_K$.
\end{lem}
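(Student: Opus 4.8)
The plan is to reduce the statement, via a standard squares trick, to a positivity assertion and then derive that positivity from Lemma~\ref{le:order-preserving} by a compactness argument over the real points of the polydisc. Recall first that for $g\in K$ the condition $\norm{g}\le 1$ means that $g$ lies in the convex hull $\cO$ of $\RR$ in $K$, equivalently that $\abs{g}\le r$ for some $r\in\RR_{>0}$; write $\res\colon\cO\to\RR$ for the associated standard part, which is order-preserving and restricts to the identity on $\RR$. Thus, given $f\in B_{m,\alpha}$ (with $\alpha>1$) and $\norm{f}\le 1$, it is enough to exhibit an $N\in\NN$ with $\abs{\sigma(f)(a)}<N$ for all $a\in[-1,1]_K^m$.

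I would take $N\in\NN$ larger than the supremum of the absolute value of the top slice $f_0\in A_{m,\alpha}$ of $f$ on the compact polydisc $[-1,1]_{\RR}^m$ (finite, since $\alpha>1$), and set $h=N^2-f^2\in B_{m,\alpha}$. A quick Gauss-norm computation gives $\norm{h}=1$ and shows that the top slice of $h$ is strictly positive on all of $[-1,1]_{\RR}^m$ (it equals $N^2-f_0^2$ when $\norm{f}=1$, and $N^2$ otherwise). Since $\sigma$ is an $\RR$-algebra morphism fixing $N^2$, once we know $\sigma(h)>0$ on $[-1,1]_K^m$ we obtain $\sigma(f)(a)^2<N^2$, hence $\abs{\sigma(f)(a)}<N$, as wanted.

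So everything comes down to the following claim: if $h\in B_{m,\alpha}$ with $\alpha>1$, $\norm{h}=1$, and the top slice $h_0$ is strictly positive on $[-1,1]_{\RR}^m$, then $\sigma(h)>0$ on $[-1,1]_K^m$. To prove it, I would run Lemma~\ref{le:order-preserving} with every real centre $b\in[-1,1]_{\RR}^m$: fixing a small $r\in\RR_{>0}$ (say $r=(\alpha-1)/2$), axiom (1)(c) of Definition~\ref{def:real.weierstrass.system} puts $h(r\xi+b)$ into some $B_{m,\delta}$ with $\delta>1$, this substitution preserves the Gauss norm, and its top slice is $h_0(r\xi+b)$, which is positive at the origin. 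Lemma~\ref{le:order-preserving}, together with the compatibility of $\sigma$ with affine substitutions built into Definition~\ref{def:real.analytic.structure}, then yields a real $\rho_b>0$ with $\sigma(h)>0$ on $\bigl(b+[-\rho_b,\rho_b]_K^m\bigr)\cap[-1,1]_K^m$. The boxes $b+(-\rho_b,\rho_b)_{\RR}^m$ cover the compact set $[-1,1]_{\RR}^m$, so finitely many centres $b_1,\dots,b_k$ suffice. For arbitrary $a\in[-1,1]_K^m$ each coordinate is in $\cO$, so $\res(a)\in[-1,1]_{\RR}^m$ lies in some $b_j+(-\rho_{b_j},\rho_{b_j})_{\RR}^m$; applying $\res$ to $\rho_{b_j}-\abs{a_i-(b_j)_i}$ shows each of these elements of $\cO$ has positive standard part, hence is positive, so $a\in b_j+[-\rho_{b_j},\rho_{b_j}]_K^m$ and therefore $\sigma(h)(a)>0$.

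The main obstacle is exactly this passage from a finite cover of the \emph{real} polydisc to a cover of the \emph{$K$-}polydisc: $K$ is nowhere near locally compact, and what rescues the argument is that $[-1,1]_K$ lies inside the convex hull of $\RR$, so that a standard part map is available, together with the elementary fact that an element of $\cO$ with positive standard part is positive. The remaining points — controlling radii of convergence under $\xi\mapsto r\xi+b$ and checking that the top-slice operation commutes with such real affine substitutions — are routine consequences of the axioms for $\cB$, but should be spelled out.
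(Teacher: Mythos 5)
Your proof is correct and follows essentially the same route as the paper's: both reduce, via the rescaling axiom (1)(c) and compactness of $[-1,1]_{\RR}^m$, to applications of Lemma~\ref{le:order-preserving} at real centres. The only differences are cosmetic: you package the two-sided bound as positivity of $N^2-f^2$ (one application of that lemma instead of two, avoiding the normalisation $\norm{f}=1$ via axiom (1)(d)), and you spell out the standard-part argument that the paper leaves implicit in its appeal to compactness.
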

\begin{proof}
Since $\sigma \colon B_0 \to K$ is order-preserving, (\ref{it:ax_norm}) allows us to reduce to the case that $\norm{f} = 1$.
Using axiom (\ref{it:rescale}) and compactness of $[-1,1]_{\RR}$, it suffices to prove this lemma for $a \in [-\delta, \delta]_K$ for some $\delta\leq \alpha$ with $\delta \in \RR_{>0}$.
Now, there exist some $b,c\in \RR$ such that $b < f_0(0) < c$.
By Lemma \ref{le:order-preserving}, applied to $f(x)-b$ and $f(x) -c$, it follows that also $b < \sigma(f)(a) < c$ on some interval $a\in[-\delta, \delta]_K$, and thus $\norm{(\sigma f)(a)} = 1$. 
\end{proof}

We consider $K$ as a valued field whose valuation ring $K^\circ$ is the convex closure of $\RR$. 
Denote the maximal ideal of this valuation by $\Kinf$, we call elements of $\Kinf$ \emph{infinitesimal}. The corresponding valuation $\norm{\cdot } \colon K^{\times} \mapsto G$ will be called the \emph{natural valuation}, and will be written multiplicatively. Note that the residue field of $K$ is isomorphic to $\RR$, and that the reduction map $\Kfin\to \Kfin/\Kinf$ restricts to an isomorphism on $\RR\subset K$. For an element $x\in K^\circ$, denote by $x^\circ$ the element of $\RR\subset K$ which is closest to $x$, i.e.\ for which $x-x^\circ\in \Kinf$. The following lemma tells us that this valuation is automatically henselian. This is again a variation on a classical argument, see e.g. \cite[Lem.~2.5]{DMM}, in our more abstract context.

\begin{lem}\label{lem:cB.implies.henselian}
Any ordered field with $\cB$-analytic structure is henselian with respect to the natural valuation.%, hence almost real closed.
\end{lem}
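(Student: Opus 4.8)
The goal is to show that $K$, equipped with the canonical valuation $\norm{\cdot}$ whose valuation ring $K^\circ$ is the convex closure of $\RR$, is henselian. The most economical route is to verify Hensel's lemma in the form: every polynomial $P(y) \in K^\circ[y]$ with $\norm{P(0)} < 1$ and $\norm{P'(0)} = 1$ has a root in $\Kinf$. By rescaling we may reduce to a monic polynomial, or more precisely arrange that the relevant coefficient is a unit; the key point is that such a $P$ becomes, after dividing by its leading coefficient and a suitable substitution, a polynomial to which the implicit function result of Lemma~\ref{lem:implicit} applies. Concretely, I would write $P(y) = \sum_{i} c_i y^i$ with $c_i \in K^\circ$, and consider the associated two-variable situation: since the coefficients $c_i$ lie in $K^\circ$ and $\RR$ is the residue field, each $c_i$ has a real part $c_i^\circ \in \RR$ and an infinitesimal correction. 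The strategy is to encode ``$P$ with its coefficients as extra infinitesimal variables'' as the specialization of a single element of the Weierstrass system $\cB$, and then invoke Lemma~\ref{lem:implicit} to solve for $y$ analytically in those parameters.

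**Key steps.** First, reduce to the Hensel condition for a polynomial $P(y) = y^s U(y) + R(y)$ where, after Weierstrass preparation/division applied to a generic polynomial of bounded degree, one isolates a factor regular of degree $1$ in $y$; alternatively, and more directly, reduce to the case $\deg P$ arbitrary but with $\norm{P(0)}<1$, $\norm{P'(0)}=1$. Second, introduce a tuple of variables $z = (z_i)$ standing for the infinitesimal parts of the coefficients $c_i$, so that $P(y) = \tilde P(y,z)$ for a polynomial $\tilde P \in \RR[y,z] \subset B_{s+1,\alpha}$ for every $\alpha$, with $\tilde P(y,0) = \sum_i c_i^\circ y^i =: P^\circ(y)$ having $P^\circ(0) = 0$ and $(P^\circ)'(0) \neq 0$ — here we use that $\norm{P(0)}<1$ forces all $c_i^\circ = 0$ for the constant term, wait, more carefully: $\norm{P(0)} < 1$ means $c_0 \in \Kinf$, so $c_0^\circ = 0$; and $\norm{P'(0)} = 1$ means $c_1 \in (K^\circ)^\times$, so $c_1^\circ \neq 0$. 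Thus $\tilde P(y,z)$, viewed in the variable $y$ with parameters $z$, satisfies $\tilde P(0,0) = 0$ and $\partial_y \tilde P(0,0) = c_1^\circ \neq 0$; after normalizing the Gauss norm via axiom~(\ref{it:ax_norm}) we get an element of $\cB$ which is regular of degree $1$ in $y$. Third, apply Lemma~\ref{lem:implicit} to obtain $r(z) \in B_{s,\delta}$ with $\tilde P(r(z), z) = 0$ identically, and with $r(0) = 0$ (the lemma's proof gives $r_0(0)=0$). Fourth, specialize: evaluate $\sigma(r)$ at the point $a = (c_i - c_i^\circ)_i \in \Kinf^s \subset [-\delta,\delta]_K^s$ — this is legitimate by Lemma~\ref{le:automatic_continuity} and a rescaling via axiom~(\ref{it:rescale}) to land inside the radius of convergence — and conclude that $y_0 := \sigma(r)(a) \in K$ satisfies $P(y_0) = 0$; moreover $\norm{y_0} < 1$ since $r_0(0) = 0$ and $a$ is infinitesimal, so $y_0$ is the desired root in $\Kinf$.

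**Main obstacle.** The delicate point is the bookkeeping of radii of convergence and the coefficient axiom: one must check that the polynomial $\tilde P(y,z)$, once rescaled so its Gauss norm is $1$, is genuinely regular of degree $1$ in $y$ \emph{at $0$} in the sense of Definition~\ref{def:real.weierstrass.system} (i.e. that the top slice $\tilde P_0$ is regular of degree $1$), and that after shrinking to some $B_{s,\delta}$ the composition $\sigma(r)(a)$ makes sense for \emph{all} infinitesimal $a$ — not just for $a$ in a fixed real interval. This requires combining the rescaling axiom~(\ref{it:rescale}) with the observation that every infinitesimal element of $K$ has Gauss norm strictly less than any positive real, so lies in $[-\delta,\delta]_K$ for every $\delta \in \RR_{>0}$; hence no genuine obstruction arises, but the argument must be phrased carefully. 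A secondary subtlety is that $P$ may have large degree $s$ with several infinitesimal coefficients, so the auxiliary variable tuple $z$ has length $s$ (or $s+1$), and one should confirm Lemma~\ref{lem:implicit} is being applied in the right number of variables — which is fine, as that lemma is stated for $B_{m+1,\alpha}$ with arbitrary $m$. Once these technical points are in place, the conclusion that $K$ is henselian is immediate from the single-variable Hensel criterion verified above.
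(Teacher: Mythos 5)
Your proposal is correct in outline and follows essentially the same strategy as the paper: reduce henselianity to a one-variable Hensel criterion, replace the non-real (infinitesimal) data in the coefficients by extra variables to get an element of $\cB$ that is regular of degree $1$ in $y$, solve by Lemma~\ref{lem:implicit}, and evaluate via $\sigma$ at the infinitesimal tuple, with Lemma~\ref{le:automatic_continuity} controlling the norm. The one substantive difference is the choice of criterion, and it creates an obligation you only assert. The paper uses the criterion that every $p(y)=1+y+a_2y^2+\cdots+a_dy^d$ with $a_i\in\Kinf$ has a root in $K^{\circ}$; there any root in $K^{\circ}$ automatically has residue $-1$ (all $a_i$ reduce to $0$), so the bound $\norm{(\sigma r)(a)}\leq 1$ from Lemma~\ref{le:automatic_continuity} is all that is needed. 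Your criterion ($P(0)\in\Kinf$, $P'(0)$ a unit, root required in $\Kinf$) demands a root that is \emph{strictly} infinitesimal, and this is not automatic: the residue polynomial $P^{\circ}(y)=\sum_i c_i^{\circ}y^i$ may well have nonzero real roots (e.g.\ $P(y)=y-y^2+\varepsilon$), so Lemma~\ref{le:automatic_continuity} alone does not single out the root near $0$. Your justification ``$\norm{y_0}<1$ since $r_0(0)=0$ and $a$ is infinitesimal'' is true but needs an argument: for every real $\varepsilon>0$ choose a real $\rho>0$ with $\abs{r_0}<\varepsilon$ on $[-\rho,\rho]_{\RR}$, and apply Lemma~\ref{le:order-preserving} to $\varepsilon-r(\rho z)$ and $\varepsilon+r(\rho z)$ (both have Gauss norm $1$ and strictly positive top slice at $0$) to conclude $\abs{\sigma(r)(a)}<\varepsilon$ for all infinitesimal $a$; hence $\sigma(r)(a)\in\Kinf$. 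With that step supplied (or with the paper's choice of criterion, which sidesteps it), your proof is complete; the remaining bookkeeping you flag (rescaling into radius of convergence, regularity of the top slice) is handled exactly as in the paper.
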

\begin{proof}
Let $K$ be an ordered field with $\cB$-analytic structure. It suffices to show that any polynomial $p(y) \in K[y]$  of the form
\[ p(y) = 1 + y + a_2 y^2 + \dots + a_d y^d, \]
with all $a_i \in K^{\circ \circ}$ has a root in $K^{\circ \circ}$ (see e.g. \cite[Lem.~2.2]{vdD14}).
Replace each $a_i$ by a variable $x_i$, to obtain $q(x,y) \in \ZZ[x,y]$ such that $q(a,y) = p(y)$, where $a = (a_2,\dots,a_d)$.
Now consider the polynomial $\tilde{q}(x,y) = q(x-1,y)$ and remark that it satisfies the hypotheses of Lemma \ref{lem:implicit}.
Let $r(x)$ be the implicit function thus defined.
Then $(\sigma r)(a)$ is a zero of $p(y-1)$, and moreover $\norm{(\sigma r)(a)} \leq 1 $ by Lemma \ref{le:automatic_continuity}, using that $\norm{r}\leq 1$.
\end{proof}

We will need to extend analytic structures to algebraic field extensions, for which we use the following.

\begin{lem} \label{lem:alg-extension}
Let $L$ be an ordered algebraic field extension of $K$, then there exists a unique $\cB$-analytic structure on $L$ extending the one on $K$.
\end{lem}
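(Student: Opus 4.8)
The plan is to reduce, by a standard direct-limit argument, to the case where $L/K$ is finite, and then to build the extension of $\sigma$ by hand using Weierstrass division together with the implicit function statement of Lemma~\ref{lem:implicit}. First I would observe that any ordered algebraic extension is the union of its finite subextensions, and that a $\cB$-analytic structure on each finite piece, unique and compatible, glues to one on $L$; uniqueness on $L$ follows from uniqueness on each finite piece. So fix a finite ordered extension $L/K$. Since $\RR$ is real closed and sits inside $K$ with $K^\circ$ the convex closure of $\RR$, the extension $L/K$ is an extension of valued fields with residue field again $\RR$ (the residue field cannot grow, as $\RR$ is real closed) and with value group $G'\supseteq G$ of finite index; moreover $L$ carries a valuation ring $L^\circ$ which is again the convex closure of $\RR\subset L$, so the canonical-valuation framework persists.

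The heart of the matter is to define $\sigma^L_{n,\alpha}\colon B_{n,\alpha}\to \{L\text{-valued functions on }[-1,1]_L^n\}$. Given $f\in B_{n,\alpha}$ with $\norm f=1$ (the general case reduces to this by axiom~(\ref{it:ax_norm}), dividing by an element of $B_0\subset K\subset L$) and given $a\in [-1,1]_L^n$, I want to make sense of $f(a)$. Write $a=a^\circ+\varepsilon$ with $a^\circ\in[-1,1]_\RR^n$ and $\varepsilon\in (\Linf)^n$; after the rescaling/translation axiom~(\ref{it:rescale}) we may assume $a^\circ=0$, so we must evaluate $f$ at an infinitesimal point of $L^n$. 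Using iterated Weierstrass division of $f$ by the "coordinate minus $a_i$" data — exactly as in the description of the natural analytic structure in Example~\ref{ex:structure-natural}(2) — or, more robustly, using Lemma~\ref{lem:implicit} to solve, one variable at a time, for the value of $f$, one produces the desired element of $L$. Concretely: regard $L=K[\theta]$; each element of $[-1,1]_L$ is of the form $\sum_{j} c_j\theta^j$ with $c_j\in K$, and one evaluates $f$ on such a tuple by first substituting the $K$-parts via $\sigma^K$, then handling the infinitesimal $\theta$-perturbation by Weierstrass division inside $\cB$ (whose output radius $\delta$ by our amended axiom~(\ref{it:ax_Wdiv}) does not depend on the number of auxiliary variables introduced). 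One then checks that the resulting map is an $\RR$-algebra homomorphism, is compatible with the inclusions $B_{n,\alpha}\subset B_{n,\beta}$, respects the rescaling condition~(1)(c), and extends $\sigma^K$ when $a\in[-1,1]_K^n$; each of these is a routine verification because Weierstrass division is unique and $\RR$-linear, and because all the identities one needs already hold as identities of power series over $B_0$.

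For uniqueness, suppose $\tau$ is another $\cB$-analytic structure on $L$ extending $\sigma^K$. Given $f$ and $a\in[-1,1]_L^n$ as above, the Weierstrass division identity $f = Q\cdot(\xi_1-a_1)+\dots$ (iterated) is an identity in a suitable $B_{n+(\text{aux}),\delta}$ after adjoining the coordinates of $a$ as the auxiliary variables and specializing; applying either $\tau$ or our constructed $\sigma^L$ and using that both agree on $K$ (hence on the specialized auxiliary variables) and send $\xi_i\mapsto$ the $i$-th coordinate function forces $\tau(f)(a)=\sigma^L(f)(a)$. I expect the main obstacle to be bookkeeping rather than conceptual: one must ensure that the auxiliary variables needed to encode an element $a\in[-1,1]_L^n$ (its $K$-coordinates relative to a basis of $L/K$, together with its infinitesimal part) can genuinely be absorbed into the Weierstrass machinery with a radius $\delta$ that is uniform enough for the construction to close up, and that the algebraic relation defining $\theta$ over $K$ is respected — this is precisely where henselianity (Lemma~\ref{lem:cB.implies.henselian}) and Lemma~\ref{lem:implicit} do the real work, guaranteeing that the "value of $f$ at $a$" we cook up actually lies in $L$ and not in some larger field.
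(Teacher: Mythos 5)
Your overall architecture (reduce to evaluation at infinitesimal points, build $\sigma^L$ by Weierstrass division, get uniqueness from the same division identity) matches the paper's intent, but the one step that actually makes the construction work is missing, and the substitutes you offer do not function. You propose to evaluate $f\in B_{m,\alpha}$ at $a\in (L^{\circ\circ})^m$ by ``iterated Weierstrass division by the coordinate minus $a_i$'' as in Example~\ref{ex:structure-natural}(2), or by adjoining the coordinates of $a$ as auxiliary variables and specializing. The first option is not available here: $\xi_i-a_i$ is not an element of any $B_{n,\alpha}$ (nor of $\cA_{n,\alpha}((\Omega))$), since $a_i$ is an abstract element of $L$, not of $B_0$; in Example~\ref{ex:structure-natural}(2) this works only because the ambient field is $\RR((G))$ and the full system $\cA((G))$ literally contains its elements as constants. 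The second option is circular: dividing $f(\xi_1,\xi')$ by $\xi_1-\eta_1$ with $\eta_1$ a new variable produces the remainder $f(\eta_1,\xi')$, so ``specializing $\eta_1\mapsto a_1$'' is exactly the evaluation you are trying to define, and the uniqueness argument ``both structures agree on the specialized auxiliary variables because they agree on $K$'' fails because those specializations lie in $L\setminus K$. Likewise, Lemma~\ref{lem:implicit} solves $f(x,r(x))=0$ for $f$ regular of degree $1$; there is no such equation here, so it cannot ``solve for the value of $f$''.

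The missing idea is to divide not by $\xi_i-a_i$ but by the minimal polynomials of the $a_i$ over $K$, written as $p_i(x_i,y)\in\ZZ[x_i,y]$ with the coefficients replaced by auxiliary variables $y$ and then specialized to the actual coefficient tuple $b\in K^{\ell}$. Henselianity of the canonical valuation (Lemma~\ref{lem:cB.implies.henselian}) enters precisely to guarantee that all conjugates of each infinitesimal $a_i$ are infinitesimal, hence $b\in (K^{\circ\circ})^{\ell}$, so these specializations land in the domain where $\sigma^K$ is defined and $p_i(x_i,0)=x_i^{s_i}$ is regular in $x_i$. Iterated Weierstrass division of $f(x)$ by the $p_i(x_i,y)$ then yields a remainder $r(x,y)$ that is a \emph{polynomial} in each $x_i$ with $\cB$-coefficients in $y$; since $p_i(a_i,b)=0$, any extension must satisfy $(\sigma f)(a)=(\sigma r)(a,b)$, and the right-hand side is computed purely from $\sigma^K$ (applied at $b\in K$) and the ring structure of $L$ (polynomial evaluation at $a$). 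This single identity gives both uniqueness and the definition of the extension; without it, your construction never produces a well-defined element of $L$, and your uniqueness argument has no purchase on the new algebraic elements. (Your preliminary reductions — direct limit over finite subextensions, rescaling to infinitesimal $a$ — are fine, though the direct-limit step is not needed.)
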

\begin{proof}
Since the natural valuation on $K$ is henselian, it extends uniquely to the algebraic closure $K^\alg$ of $K$. In particular, this extension coincides on $L$ with the natural valuation on $L$.

This follows by Weierstrass division, as in e.g. \cite[Thm.~2.18]{CLR}.
We sketch the general idea.
Take an element $f\in B_{m, \alpha}$ and $a\in L^m$, we wish to evaluate $f(a)$. 
By axiom (\ref{it:ax_rescaling}) and compactness of $[-1,1]_{\RR}$, it suffices to consider the case where $a \in (L^{\circ \circ})^m$. 
Let $p_1(x,y), \dots, p_m(x_m,y) \in \ZZ[x,y]$ with $x = (x_1,\dots,x_m)$ and $y = (y_1,\dots,y_\ell)$ (for some $\ell \in \NN$), and $b \in K^\ell$ be such that $p_i(x_i,b)$ is the minimal polynomial of $a_i$.
By henselianity, all conjugates of each $a_i$ are infinitesimal in $K^\alg$, and hence all $b_j$ belong to $K^{\circ \circ}$ for $j=1, \ldots, \ell$.
By iterated Weierstrass division of $f(x)$ by $p_i(x_i,y)$ we obtain
\[
f(x) = p_1(x_1,y)  g_1(x,y) + \dots + p_m(x_m,y)  g_m(y)  + r(x,y),
\]
inside $B_{m+n, \delta}$, where $\delta \leq \alpha$, and $r(x,y)$ is a polynomial in each $x_i$. 
Hence if the analytic structure extends to $L$ then the value of $(\sigma f)(a)$ must be equal to $(\sigma r)(a,b)$, which is polynomial in the $a_i$ and thus uniquely determined by the $\cB$-analytic structure on $K$.
Conversely, expanding $\sigma$ in this way equips $L$ with $\cB$-analytic structure.
\end{proof}

\subsection{Almost real closed fields}

We recall the notion of almost real closed fields, and refer to~\cite{delon-farre, krapp} for more information.

\begin{defn}
An \emph{almost real closed field $K$} is an ordered field which admits a henselian valuation for which the residue field is real closed.
\end{defn}

\begin{remark}
Some authors define an almost real closed field as a field which admits a henselian valuation with real closed residue field. Any such field admits an order compatible with the residue field, by the Baer--Krull theorem. For us, the order is part of the data of an almost real closed field, but the henselian valuation is not.
\end{remark}

Almost real closed fields always come equipped with the natural valuation, for which the valuation ring is the convex closure of $\QQ$. We will more generally consider convex valuations.

\begin{defn}
If $K$ is an almost real closed field, then a \emph{convex valuation} on $K$ is a valuation whose valuation ring is a convex set with respect to the ordering on $K$.
\end{defn}

One can show that on an almost real closed field, a valuation is convex if and only if it is henselian, see~\cite[Lem.\,2.1]{KW76} and~\cite[Prop.\,2.9]{delon-farre}. In particular, an ordered field is almost real closed if and only if the natural valuation is henselian and has real closed residue field.

Fields with real analytic structure are automatically almost real closed, as the following lemma shows.

\begin{lem}\label{lem:cB.implies.almost.real.closed}
Any ordered field with $\cB$-analytic structure is almost real closed.
\end{lem}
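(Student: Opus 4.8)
The plan is to combine the two structural facts about ordered fields with $\cB$-analytic structure that have already been established in the excerpt. By Lemma~\ref{lem:cB.implies.henselian}, any such field $K$ is henselian with respect to its canonical valuation, whose valuation ring $K^\circ$ is the convex closure of $\RR$ (equivalently the convex closure of $\QQ$, since $\RR$ lies in the convex closure of $\QQ$ and vice versa). So it remains only to observe that the residue field of this canonical valuation is real closed.

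First I would recall, as noted in the paragraph preceding Lemma~\ref{lem:cB.implies.henselian}, that the residue field of $K$ with respect to the canonical valuation is isomorphic to $\RR$, and that the reduction map restricts to an isomorphism on the copy of $\RR\subset K$ coming from $B_0\supset\RR$. Indeed, $K^\circ$ is by definition the convex closure of $\RR$ in $K$, so every element of $K^\circ$ is within an infinitesimal of a (unique) real number, giving a surjection $K^\circ\to\RR$ with kernel exactly $\Kinf$; hence the residue field is $\RR$. Since $\RR$ is real closed, the canonical valuation is a henselian valuation with real closed residue field, and the order on $K$ is compatible with it because $K^\circ$ is convex. Therefore $K$ satisfies the definition of an almost real closed field.

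The one genuinely substantive point — and really the only obstacle — is the henselianity of the canonical valuation, but this is exactly the content of Lemma~\ref{lem:cB.implies.henselian}, which may be invoked directly. The remaining steps (that the residue field is $\RR$, that $\RR$ is real closed, that convexity of the valuation ring makes the order compatible) are all immediate from the setup recalled just before and just after that lemma, together with the characterization of almost real closed fields via the canonical valuation stated after the definition of convex valuation (namely that an ordered field is almost real closed iff its canonical valuation is henselian with real closed residue field). So the proof is a two-line citation: apply Lemma~\ref{lem:cB.implies.henselian} and use that the residue field of the canonical valuation is $\RR$.
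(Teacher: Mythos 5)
Your proof is correct and matches the paper's own argument: the canonical valuation is henselian by Lemma~\ref{lem:cB.implies.henselian} and has residue field $\RR$, which is real closed, so $K$ is almost real closed. Your additional remarks about convexity and compatibility of the order are fine but not needed beyond the definition.
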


\begin{proof}
The natural valuation has residue field $\RR$ and is henselian by Lemma~\ref{lem:cB.implies.henselian}.
\end{proof}

\subsection{Hensel minimality}

In this section we recall the definition of Hensel minimality in the equicharacteristic zero setting~\cite{CHR}. This section is unrelated to the previous ones. We first recall the set-up, and some notation for valued fields. 

Let $K$ be a valued field with valuation ring $\cO_K$ and value group $\Gamma_K$. We denote the valuation additively by $v_K: K\to \Gamma_K\cup \{\infty\}$. The maximal ideal of $K$ is denoted $\cM_K$ and the residue field by $k = \cO_K/\cM_K$. For $a\in K, \lambda\in \Gamma_K$ let $B_{> \lambda}(a)$ and $B_{\geq \lambda}(a)$ be the open resp.\ closed ball of radius $\lambda$ with centre $a$. For $\lambda\in \Gamma_{K,\geq 0}$, we define 
\[
\RV_\lambda = \frac{K^\times}{1+B_{>\lambda}(0)} \cup \{0\},
\]
and the natural map $K\to \RV_{\lambda}$ by $\rv_{\lambda}$. If there are multiple fields around, we will denote this by $\RV_{K, \lambda}$. We let $\RV_\lambda^\times = \RV_\lambda\setminus \{0\}$, and denote $\RV_0$ and $\rv_0$ by $\RV$ and $\rv$. The structure $\RV$ combines information from the residue field and the value group, via the exact sequence
\[
1\to k^\times \to \RV^\times \to \Gamma_K\to 0.
\]

Let $c$ be an element of $K$ and $\lambda\in \Gamma_{K,\geq 0}$, the \emph{balls $\lambda$-next to $c$} are precisely the fibres of the map $x\mapsto \rv_\lambda(x-c)$. If $C\subset K$ is a finite set, then a \emph{ball $\lambda$-next to $C$} is a (non-empty) intersection of balls $B_c$ for $c\in C$, where $B_c$ is $\lambda$-next to $c$. Say that a finite set $C$ \emph{$\lambda$-prepares} a set $X\subset K$ if each ball $\lambda$-next to $C$ is either contained in $X$, or disjoint from $X$.

We can now define what h-minimality is, following~\cite{CHR}.

\begin{defn}[{{\cite[Def.\,1.2.3]{CHR}}}]
Let $\cL$ be a language expanding the language of valued fields $\cL_{\val} = \{0,1,+,\cdot,\cO\}$, and let $T$ be a complete theory of $\cL$-structures which are equicharacteristic zero henselian valued fields. Let $\ell$ be a positive integer (including $0$) or $\omega$. Then we say that $\cT$ is \emph{$\ell$-h-minimal} if the following holds for every model $K$ of $T$:
\begin{enumerate}
\item[] For every $\lambda\in \Gamma_{K,\geq 0}$, every $A\subset K$, and every $A'\subset \RV_{K, \lambda}$ with $|A'|\leq \ell$, if $X\subset K$ is $\cL(A\cup \RV_K\cup A')$-definable, then there exists a finite $\cL(A)$-definable set $\lambda$-preparing $X$.
\end{enumerate} 
\end{defn}

The notion of h-minimality implies several tameness properties about definable objects, and is preserved under various natural operations, see e.g.\ ~\cite{CHR, CHRV, Verm:h-min}. We will discuss this in more detail in Section~\ref{sec:h.min.proof}, where we will apply it to almost real closed fields with analytic structure.

\subsection{Notation}

For convenience of the reader, we list here some recurring notation in the following sections.

\begin{longtable}{p{2.5cm}p{10cm}}
$K$ & an almost real closed field \\
$K^\rcl$ & the real closure of $K$ \\
$|\cdot |: K\to K_{>0}$ & the absolute value on $K$ \\
$\norm{\cdot}: K\to G$ & the natural valuation on $K$, with multiplicative notation \\
$K^\circ, K^{\circ \circ}$ & the valuation ring and maximal ideal for the natural  valuation on $K$ \\
$x^\circ, x\in K^\circ$ & the element $x^\circ$ in $\RR\subset K$ closest to $x$ \\
$v: K\to \Gamma$ & a convex valuation on $K$, with additive notation \\
$\cO_K, \cM_K$ & the valuation ring and maximal ideal of $v$\\

$\cB$ & a real Weierstrass system over $\Omega$, typically strong and rich \\
$\Omega$ & the value group of the Weierstrass system $\cB$ \\
$\norm{\cdot}$ & the Gauss norm on $\cB$ \\

%$\cL_{\ring}$ & the language of rings $\{0,1,+,\cdot\}$ \\
$\cL_{\val}$ & the language of valued fields $\{0,1,+,\cdot, \cO\}$ \\
$\Lor$ & the language of ordered rings $\{0,1,+,\cdot, <\}$ \\
$\cL_{\cB}$ & the language $\Lor$, together with a symbol $(\cdot)^{-1}$ for field inversion, and function symbols for all elements of $\cB$ \\
$\cL_{\ovf}$ & the language of ordered valued fields $\{0,1,+,\cdot, \cO, <\}$ \\
$\cL_{\val, \cB}$ & the union of the languages $\cL_{\val}$ and $\cL_{\cB}$ \\
$\cL_{\RV}$ & two-sorted language with $\cL_{\val}$ on the valued field sort and $\{0,1,\cdot, \oplus\}$ on the $\RV$-sort \\
$\cL_{\RV_\lambda, \cB}$ & two-sorted language with $\cL_{\val, \cB}$ on the valued field sort and the full induced structure on $\RV_\lambda$ \\
$\cL_{\ac, \cB}$ & three-sorted language with symbols for $\cB$ and an angular component map $\ac$

\end{longtable}

\section{Analysing terms}\label{sec:term.analaysis}

Let $\Omega$ be an ordered abelian group, and let $K$ be an ordered field equipped with real analytic $\cB$-structure, for some real Weierstrass system $\cB$ over $\Omega$. Crucial for us is a precise control of the $1$-terms \emph{with} parameters from $K$. Our approach to analyse these $1$-terms is as follows. First, to allow for parameters, we need to extend our Weierstrass system to a larger one where we include the parameters. Secondly, we prepare the desired term on the real closure of $K$, following the methods of~\cite{CLRr}. Finally, we pull back this preparation to the ground field $K$. These steps are achieved in the following three subsections, with Proposition~\ref{prop:term_analysis} being the main tool from this section. 

Let $\cL_\cB$ be the language $\{0,1,+,\cdot, (\cdot)^{-1}, <\}$ together with function symbols for all elements of $\cB$. Then the field $K$ is naturally an $\cL_\cB$-structure. Even though we will consider $K$ as a valued field for the natural valuation $\norm{\cdot}: K^\times\to G$, note that $\cL_\cB$ does not have a symbol for the valuation ring.

\subsection{An embedding theorem}\label{sec:embedding.theorem}
A key step in our approach is to first embed any field $K$ with $\cB$-structure into a Hahn series field $\RR((G))$ with the \emph{canonical} $\cB$-structure from Example \ref{ex:structure-natural}. This allows us to easily add parameters from $K$ to the elements from $\cB$. This adding of parameters is explained in more detail in Section~\ref{sec:extending.weierstrass}.

The following lemma is probably well-known to experts, but as we could not find a suitably precise reference, we give some details. 

\begin{lem} \label{lem:extend_section}
Let $K$ be an almost real closed field, with natural valuation $\norm{\cdot} \colon K^{\times} \to G$. 
Then, for any subgroup $\Gamma \leq G$, any partial section $s_0 \colon \Gamma \to K_{>0}$ of the valuation extends to a section $s \colon G \to K_{>0}$.
\end{lem}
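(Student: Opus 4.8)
The statement is: given an almost real closed field $K$ with canonical valuation $\norm{\cdot}\colon K^\times \to G$, a subgroup $\Gamma \le G$, and a partial section $s_0\colon \Gamma \to K_{>0}$ of the valuation, one can extend $s_0$ to a section $s\colon G \to K_{>0}$. Here "section" means a group homomorphism splitting the valuation, i.e. $\norm{s(g)} = g$ for all $g$, and we want the image inside the positive cone $K_{>0}$ so that everything is compatible with the ordering.

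The natural approach is a Zorn's lemma argument: consider the poset of pairs $(\Gamma', s')$ where $\Gamma \le \Gamma' \le G$ is a subgroup and $s'\colon \Gamma' \to K_{>0}$ is a section extending $s_0$, ordered by extension. Chains have upper bounds (take unions), so it suffices to show a maximal element $(\Gamma', s')$ has $\Gamma' = G$. Suppose not, and pick $g \in G \setminus \Gamma'$. There are two cases. \emph{Case 1: no positive integer multiple of $g$ lies in $\Gamma'$.} Then $\Gamma'' := \Gamma' \oplus \ZZ g$ is a direct sum; pick any $a \in K_{>0}$ with $\norm{a} = g$ and extend $s'$ by sending $g \mapsto a$. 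This is well-defined and a homomorphism into $K_{>0}$ since the decomposition is direct, and it is still a section. \emph{Case 2: some minimal $n \ge 2$ has $ng \in \Gamma'$.} Set $h := s'(ng) \in K_{>0}$, so $\norm{h} = ng = n\norm{(\text{any element of valuation }g)}$; then $h/b^n$ has valuation $1$ for any $b$ with $\norm{b} = g$, so we need an $n$-th root of a unit in the valuation ring. The key input here is that $K$ is almost real closed, hence henselian for the canonical valuation (the residue field is $\RR$); by Hensel's lemma the polynomial $y^n - (h/b^n)^\circ \cdot(\text{correction})$... more precisely, $h \cdot b^{-n}$ lies in $K^\circ$ with positive reduction in $\RR$, and since $\RR$ is real closed it has a positive $n$-th root in the residue field, which lifts by henselianity to a positive $n$-th root $u \in K_{>0}$ of $h b^{-n}$. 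Then $c := u b$ satisfies $c^n = h$ and $\norm{c} = g$; define $s''(g) := c$ and extend multiplicatively to $\Gamma''$. One checks this is well-defined (if $mg \in \Gamma'$ with $m < n$ then $n \mid m$ by minimality, forcing $m = 0$, so $\Gamma'' = \Gamma' \oplus (\ZZ/n)$-torsion-free extension and the only relations come from $ng = $ its assigned value, which we arranged to be consistent) and order-preserving since $c > 0$.

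The main obstacle is Case 2: ensuring the $n$-th root exists \emph{and} is positive \emph{and} the resulting extension is a well-defined homomorphism. Existence and positivity come cleanly from henselianity plus real-closedness of the residue field $\RR$ (positive reals have positive $n$-th roots, and Hensel lifts them while preserving sign via the convexity of the valuation ring). Well-definedness needs the observation that the subgroup of $\Gamma'$ generated together with $g$ is determined by the single relation $n\cdot g = (ng)$, and $s'(ng) = h = c^n$ by construction, so no inconsistency arises. I would also remark at the start that one may as well assume $\Gamma' \supseteq \Gamma$ is pure in $G$ up to the torsion-free quotient issues, but it is cleaner to just handle the two cases directly inside the Zorn argument rather than reducing. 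Finally, it is worth noting explicitly that $\norm{\cdot}$ being written multiplicatively means "section" is a multiplicative splitting $G \to K^\times$ landing in $K_{>0}$, which is automatically compatible with squares and hence with the ordering — so the positivity constraint is the only genuinely extra demand beyond the usual divisible-hull splitting argument, and it is exactly what real-closedness of the residue field supplies.
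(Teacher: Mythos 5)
Your proposal is correct and follows essentially the same route as the paper: a Zorn's lemma argument in which the torsion case is handled by producing a positive $n$-th root via henselianity of the canonical valuation together with real-closedness of the residue field, and the torsion-free case by choosing an arbitrary positive preimage; the paper merely packages the root-extraction as a preliminary claim and then reduces to the case where the quotient is torsion-free. One small correction: the residue field of a general almost real closed field is only real closed, not necessarily $\RR$, but real-closedness is all your argument actually uses.
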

\begin{proof}
We first prove the following claim:
\begin{enumerate}
	\item[] Let $a\in K_{>0}$ and $n\in \NN$ such that $\norm{a}$ is an $n$-th power in $G$. Then $a$ has a unique $n$-th root in $K_{>0}$.
\end{enumerate}
For the proof of this claim, take $b\in K_{>0}$ such that $ \norm{b}^n = \norm{a}$ and put $u = a/b^n$. Then $u>0$ and $\norm{u} = 1$. Since the residue field is real closed, Hensel's lemma implies that $K$ contains a unique positive root $c$ of the polynomial $x^n - u$. But then $bc$ is the unique positive $n$-th root of $a$ in $K_{>0}$, proving the claim.

Now Zorn's lemma yields a maximal subgroup $H \leq G$ equipped with a partial section $s \colon H \to K_{>0}$ extending the one on $\Gamma$. Assume that $H \neq G$, we will show how to extend $s$ to a larger subgroup of $G$.

So take  $g\in G\setminus H$, we wish to extend $s$ to the group $H'$ generated by $H$ and $g$. By the claim, we may already assume that $G/H$ is torsion-free.
We simply take any $t_g\in K_{>0}$ for which $\norm{t_g} = g$. 
Since $G/H$ is torsion-free, it follows that every element of $H'$ can be written uniquely as $h\cdot g^n$ with $h\in H$ and $n\in \ZZ$. 
We extend $s$ to $H'$ via $s(h\cdot g^n) = s(h)t_g^n$, which is hence well-defined. 
It is straightforward to see that this is also a group morphism. We conclude that there exists a section $s: G\to K_{>0}$.
\end{proof}

%For the remainder of this section, we work in the language $\Lemb$ of \cite[Def.~3.4.1]{CLip}. This is the following language of ordered fields $\Lor = \{{+},{-},{\cdot},(\cdot)^{-1},<,0,1\}$, extended by function symbols for all elements of $B_{n,\delta}$ for $\delta \geq 1$.
%Note that $\Lemb$ does not include a symbol for a valuation ring.
%Every field with $\cB$-analytic structure is naturally a $\Lemb$-structure.
%As before, we let $K$ be any field with $K$-analytic structure. 
For any language $\cL$ and $\cL$-structure on $K$, $a \in K$ and subset $A \subset K$ we denote by $\qftp_{\cL}(a/A)$ the set of all quantifier-free $\cL(A)$-formulas $\varphi(x)$ such that $\varphi(a)$ holds.
\begin{lem} \label{lem:qf-1-types}
Let $L$ be an $\cL_\cB$-substructure of $K$. Then, for each $a \in K$, $\qftp_{\cL_\cB}(a/L)$ is uniquely determined by $\qftp_{\Lor}(a/L)$.
\end{lem}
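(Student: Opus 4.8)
The statement says that the quantifier-free $\cL_\cB$-type of an element $a \in K$ over an $\cL_\cB$-substructure $L$ is determined by its quantifier-free type in the ordered ring language. The plan is to analyse an arbitrary quantifier-free $\cL_\cB$-formula with parameters in $L$ and reduce it, modulo $\qftp_{\Lor}(a/L)$, to a Boolean combination of ordered-ring conditions. Since $\cL_\cB$ consists of $\Lor$ together with inversion and function symbols for all elements of $\cB$, a quantifier-free $\cL_\cB(L)$-formula in the single variable $x$ is a Boolean combination of conditions of the form $\tau(x) = 0$, $\tau(x) > 0$ where $\tau$ is an $\cL_\cB$-term with coefficients in $L$. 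Each such term is built from $x$ and elements of $L$ using $+, \cdot, (\cdot)^{-1}$ and the function symbols $\sigma(f)$ for $f \in \cB$, so after clearing denominators (which only changes things by a nonzero factor whose sign is governed by $\qftp_{\Lor}$) it suffices to control the sign and vanishing of $\sigma(f)(g_1(x), \ldots, g_m(x))$ where $f \in B_{m,\alpha}$ and the $g_i$ are $\cL_\cB(L)$-terms.

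First I would reduce, using axiom~(\ref{it:ax_rescaling}) together with the fact that $\sigma(f)$ is only defined on $[-1,1]_K^m$, to the situation where the arguments $g_i(a)$ are infinitesimal, i.e.\ lie in $\Kinf$; outside a bounded region one is evaluating polynomials and the claim is immediate. The key device is then the embedding into a Hahn series field: by Lemma~\ref{lem:extend_section} (choosing a section of the canonical valuation) together with the standard Ax--Kochen--Ersov-style embedding of an almost real closed field with residue field $\RR$ into $\RR((G))$ respecting the $\cB$-analytic structure, one sees that $\sigma(f)(g_1(a), \ldots, g_m(a))$, for infinitesimal arguments, is computed by genuine substitution into the power series $f$, equivalently by iterated Weierstrass division of $f$ by the $(\xi_i - g_i(a))$ as in Example~\ref{ex:structure-natural}(2) and the proof of Lemma~\ref{lem:alg-extension}. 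This is the mechanism that turns an abstract analytic term into something algebraic over $L[a]$.

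The heart of the argument is then a Weierstrass-division / Taylor-expansion step. Writing $y_i = g_i(x)$ and expanding $f(y)$ around the values $y_i = g_i(a)$ — or more robustly, performing iterated Weierstrass division of $f$ by minimal-polynomial-type relations or simply by $(\xi_i - c_i)$ where $c_i = g_i(a)^\circ$ is the residue — one writes $\sigma(f)(g(a)) = P(a, c) + (\text{infinitesimal correction})$ where $P$ is a polynomial over $L$ (using that $L$ is an $\cL_\cB$-substructure, so the division data $R_0, \ldots, R_{s-1}$ evaluated at $L$-points stay in $L$, and crucially that in the Weierstrass division axiom the radius $\delta$ does not depend on the number of ambient variables). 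One must iterate: the correction term is again of the same shape but "more infinitesimal", and Lemma~\ref{le:automatic_continuity} (automatic continuity of $\sigma$) guarantees that these corrections have strictly increasing valuation, so the sign and the vanishing/non-vanishing of $\sigma(f)(g(a))$ are already decided by the polynomial part $P(a,c)$ — whose behaviour is governed by $\qftp_{\Lor}(a/L)$. I expect the main obstacle to be precisely this bookkeeping of radii of convergence and the uniformity of $\delta$ across the iteration, together with checking that all the intermediate quantities genuinely remain $\cL_\cB(L)$-definable (so that knowing $\qftp_{\Lor}(a/L)$ really suffices and one is not secretly using new parameters); the hypothesis that $L$ is an $\cL_\cB$-substructure, not merely an $\Lor$-substructure, is what makes this closure step go through.
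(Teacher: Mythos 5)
There is a genuine gap at the heart of your argument. After the Taylor/Weierstrass expansion around the residues $c_i = g_i(a)^{\circ}$ you claim that, because the successive corrections have strictly increasing valuation, the sign and the vanishing of $\sigma(f)(g_1(a),\dots,g_m(a))$ are ``already decided by the polynomial part''. That mechanism does not work: corrections of ever higher valuation can never detect \emph{vanishing} (consider a composite term that is identically zero, e.g.\ $f(y_1,y_2)=\exp(y_1)-1-y_2$ evaluated at $(a,\exp(a)-1)$: every finite truncation is a nonzero polynomial expression, yet the value is $0$), and they decide the \emph{sign} only when some finite truncation dominates the tail -- which is exactly what fails when cancellation occurs against the $L$-parameters sitting inside the $g_i$ (and note that $N\cdot v(a-c)$ need not be cofinal in the value group, so ``increasing valuation'' does not even force eventual domination). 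Controlling such cancellations is precisely the hard analytic content; it requires genuine Weierstrass preparation of composite terms in the spirit of Cluckers--Lipshitz--Robinson, i.e.\ the kind of term analysis the paper only develops later (Section~\ref{sec:term.analaysis}), not the soft continuity estimate of Lemma~\ref{le:automatic_continuity}. There is also a circularity problem: you invoke an embedding of $K$ into $\RR((G))$ respecting the $\cB$-analytic structure to say that evaluation at infinitesimal arguments is genuine power-series substitution, but such an embedding is not standard -- it is Theorem~\ref{thm:embedding} of this paper, whose proof \emph{uses} Lemma~\ref{lem:qf-1-types}. (A further, fixable, issue: recentering at the residues $c_i$ of the inner terms presupposes that $\qftp_{\Lor}(a/L)$ controls those residues, which already requires an induction on term complexity that you have not set up.)

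For comparison, the paper's proof sidesteps all of this: it extends the $\cB$-analytic structure uniquely to the real closures $L^{\rcl}\subset K^{\rcl}$ (Lemma~\ref{lem:alg-extension}), cites the o-minimality of real closed fields with $\cB$-analytic structure from \cite{CLip}, so that $\qftp_{\cL_\cB}(a/L^{\rcl})$ is determined by the cut of $a$ over $L^{\rcl}$, and then uses uniqueness of the real closure to see that this cut is determined by $\qftp_{\Lor}(a/L)$. If you want a self-contained direct argument along your lines, you would essentially have to reprove that o-minimality result; otherwise the clean route is the paper's reduction to the real closed case.
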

\begin{proof}
By Lemma \ref{lem:alg-extension} there is a unique extension of the $\cB$-analytic structure on $L$ and $K$ to their real closures $L^\rcl$ and $K^\rcl$.
Inside $K^{\rcl}$, the resulting $\qftp_{\cL_{\cB}}(a/L^{\rcl})$ clearly contains $\qftp_{\cL_{\cB}}(a/L)$.
Since the $\cL_{\cB}$-structure on $L^{\rcl}$ is o-minimal by \cite[Thm.~3.4.3]{CLip}, it follows that $\qftp_{\cL_{\cB}}(a/L^{\rcl})$ is determined by $\qftp_{\{<\}}(a/L^{\rcl})$.
By uniqueness of the real closure, the latter is completely determined by $\qftp_{\Lor}(a/L)$.
\end{proof}

The following embedding theorem can be seen as an extension of an embedding result by van den Dries--Macintyre--Marker~\cite[Cor.~3.6]{DMM}, and is the main result of this section.
Recall that if $\Omega$ is a a subgroup of an ordered abelian group $G$ and $\cB$ is a Weierstrass system over $\Omega$, then $\RR((G))$ has a natural $\cL_{\cB}$-structure, as described in Example \ref{ex:structure-natural}.
\begin{thm} \label{thm:embedding}
Let $\cB \subset \cA((\Omega))$ be a rich Weierstrass system and let $K$ be an ordered field with $\cB$-analytic structure, with value group $G$ for the natural valuation. Then there exists an inclusion $\Omega\hookrightarrow G$ of abelian groups, and an $\cL_{\cB}$-embedding $K\hookrightarrow \RR((G))$ where $\RR((G))$ is equipped with the natural $\cB$-analytic structure.
\end{thm}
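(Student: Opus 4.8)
The plan is to build the embedding $K\hookrightarrow\RR((G))$ by a standard back-and-forth/Zorn argument, using Lemma~\ref{lem:qf-1-types} to control the analytic structure through the order alone. First I would fix a section $s\colon G\to K_{>0}$ of the canonical valuation (it exists after noting $\QQ\otimes G$ is divisible, or simply by Zorn; in fact Lemma~\ref{lem:extend_section} is used to extend sections along the induction). The residue field of $K$ is $\RR$, so composing the reduction map with this section gives a distinguished copy of $\RR$ inside $K^\circ$ and, conversely, a canonical choice of monomial $t^g:=s(g)$ for each $g\in G$. The target $\RR((G))$ with its natural $\cB$-analytic structure is the ``model'' into which we want to map; the point of using the \emph{rich} hypothesis is that $\RR(\Omega)\subset B_0\subset K$, so the monomials $t^\omega$ for $\omega\in\Omega$ are already present in $K$ and the inclusion $\Omega\hookrightarrow G$ is forced (it is just $\omega\mapsto\norm{t^\omega}$, which one checks is an order-embedding of groups).

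Next I would run a Zorn's lemma argument on the poset of pairs $(L,\iota)$ where $L$ is an $\cL_\cB$-substructure of $K$ containing $B_0$ and $\iota\colon L\hookrightarrow\RR((G))$ is an $\cL_\cB$-embedding compatible with the valuations (so that $\iota$ sends the chosen section on $\norm{L^\times}\le G$ to the monomial section of $\RR((G))$). A maximal such $(L,\iota)$ must have $L=K$: otherwise pick $a\in K\setminus L$ and extend. Here one splits into cases. If $a$ is algebraic over $L$, then by Lemma~\ref{lem:alg-extension} the $\cB$-structure on $L\langle a\rangle$ is the unique one extending $L$'s, and the order type of $a$ over $L$ determines a cut in $\iota(L)$; realize that cut inside $\RR((G))$ (possibly after first realizing it in $\RR((G))^{\rcl}$ and checking the resulting element actually lies in $\RR((G))$ because it is algebraic of the right degree over $\iota(L)$), and extend $\iota$ by sending $a$ there. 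By Lemma~\ref{lem:qf-1-types}, the full $\qftp_{\cL_\cB}(a/L)$ is determined by $\qftp_{\Lor}(a/L)$, hence by the cut, so this extension is automatically an $\cL_\cB$-embedding. If $a$ is transcendental over $L$, the same idea works: one needs to find a realization in $\RR((G))$ of the cut that $\iota(a)$ should fill. For this I would use that $\RR((G))$ is, after all, large — concretely, one writes $a$ in terms of its leading term and residue, uses the section to peel off monomials, and either $a$ determines a new element of the value group (extend the section on the larger group using Lemma~\ref{lem:extend_section}, then $\iota(a)$ is the corresponding monomial times a unit) or $a$ is a pseudo-limit of a sequence from $L$ with no pseudo-limit in $L$, in which case one exhibits such a pseudo-limit in $\RR((G))$ by taking the ``obvious'' Hahn series $\sum c_g t^g$ whose partial sums match the reductions of the approximants. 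Throughout, Lemma~\ref{lem:qf-1-types} is what guarantees that matching order types suffices to match $\cL_\cB$-types.

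The main obstacle I expect is the transcendental step, specifically producing realizations of the relevant cuts \emph{inside} $\RR((G))$ rather than in some larger field, and doing so compatibly with the analytic structure. The order-theoretic content (every cut over a countable-cofinality situation, or every pseudo-Cauchy sequence, is realized in a Hahn series field) is classical, but one must be careful that the element constructed really does have the right quantifier-free $\cL_\cB$-type, i.e.\ that the value of every term $\sigma(f)(\iota(a))$ with $f\in\cB$ over parameters in $\iota(L)$ agrees with $\iota(\sigma(f)(a))$. This is exactly where Lemma~\ref{lem:qf-1-types} does the heavy lifting: it reduces the whole compatibility question to the order, so the remaining work is the purely valuation-theoretic statement that $\RR((G))$ (maximally complete, with residue field $\RR$ and value group $G$) realizes every cut over a subfield that is realized by an element generating no new residue-field or value-group data beyond what $\RR((G))$ already has. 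A secondary technical point is bookkeeping the section: each time we enlarge $L$ we may enlarge $\norm{L^\times}$, and we must keep the embedding sending our fixed section to the monomial section, which is precisely what Lemma~\ref{lem:extend_section} lets us arrange.
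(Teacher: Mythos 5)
Your proposal matches the paper's proof in essentially every respect: a Zorn argument on partial $\cL_\cB$-embeddings into $\RR((G))$ extending the one on $B_0$, with Lemma~\ref{lem:qf-1-types} reducing all compatibility with the analytic structure to order types, Lemma~\ref{lem:extend_section} supplying the section and monomials for value-group extensions, and pseudo-limits together with maximal completeness of $\RR((G))$ handling the immediate case. The only (minor, organizational) difference is that the paper splits the one-step extensions by the value group of $L$ --- torsion extension of $\Gamma_L$ (adjoin $s(g)\mapsto t^g$ via Lemma~\ref{lem:alg-extension}), torsion-free extension, and $\Gamma_L=G$ (immediate, where a sign comparison using that the pseudo-Cauchy sequence is of transcendental type, by henselianity of $L$, shows the pseudo-limit realizes the same cut) --- rather than by algebraic versus transcendental, which lets it sidestep your step of checking that algebraic realizations found in $\RR((G))^{\rcl}$ actually lie in $\RR((G))$.
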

\begin{proof}
The inclusions $\RR(\Omega) \subset B_0 \subset K$ determine an inclusion $\Omega \leq G$ and a partial section $s \colon \Omega \to B_0\subset K_{>0} \colon \omega \to t^{\omega}$ for the natural valuation on $K$. 
By Lemma \ref{lem:extend_section}, $s$ extends to a section $G \to K_{>0}$.
Additionally the inclusion $\Omega \leq G$ induces an $\cL_{\cB}$-embedding $\iota_0 \colon B_0 \hookrightarrow \RR((G))$.
Using Zorn's lemma, we construct a maximal $\cL_{\cB}$-substructure $L$ of $K$ together with an $\cL_{\cB}$-embedding $\iota \colon L \hookrightarrow \RR((G))$, extending the one on $B_0.$
We show that $L = K$.
Assume that $L \neq K$ and consider the following cases, where we write $\Gamma_L$ for the value group of $L$ with respect to the natural valuation:
\begin{enumerate}
	\item $\Gamma_L \neq G$ and $G/\Gamma_L$ has torsion, \label{it:cl-div}
	\item $\Gamma_L \neq G$ and $G/\Gamma_L$ is torsion-free, \label{it:cl-indep}
	\item $\Gamma_L = G$. 	\label{it:cl-final}
\end{enumerate}

We will consider $L$ as a subfield of $\RR((G))$ through $\iota$ (i.e.\ for $a \in L$, we will also write $a$ for its image $\iota(a) \in \RR((G))$).
We view $K$, $L$ and $\RR((G))$ as valued fields for their natural valuations $\norm{\cdot}$. 

(\ref{it:cl-div}) Take some $g \in G \setminus \Gamma_L$ and $h \in \Gamma_L$ such that $g^n = h$.
Then $s(g)$ is the unique positive $n$-th root of $s(h) \in L$.
Since Weierstrass systems extend uniquely to algebraic extensions by Lemma \ref{lem:alg-extension}, we can extend $\iota$ to $L(s(g))$ by mapping $s(g)$ to $t^g$, which is the unique positive $n$-th root of $t^h$.

(\ref{it:cl-indep}) Take again some element $g\in G\setminus \Gamma_L$ and write $s(g) = \lambda$.
Then, for all natural numbers $i \neq j$ and  $b,c \in L^{\times}$, we have $\norm{b \lambda^{i}} \neq \norm{c \lambda^{j}}$. 
In particular, if $p(x)  = \sum_{i =0}^n a_i x^i$, is a polynomial with coefficients in $L$, then there is a unique index $i_0$ such that $\norm{a_{i_0} \lambda^{i_0}}$ is maximal among  $\{\norm{a_i \lambda^i}\}_{i=0}^n$. 
It follows that $p(\lambda) > 0$ if and only if $a_{i_0} > 0$.
Since $i_0$ depends only on $g$ (and not on $\lambda$), it follows that $p(\lambda) > 0$ if and only if $p(t^g) > 0$.
Now Lemma \ref{lem:qf-1-types} allows us to extend $\iota$ on the substructure of $K$ generated by $L$ and $\lambda$, by mapping $\lambda$ to $t^g$, since $\qftp_{\Lor}(\lambda/L) = \qftp_{\Lor}(t^g/L)$.

(\ref{it:cl-final})
Since $L \neq K$, there exists some $\lambda \in \ifin{K} \setminus \ifin{L}$. 
Because $K$ is an immediate extension of $L$, we may find a sequence $(x_{\alpha})_{\alpha < \beta}$ in $L$ which pseudo-converges to $\lambda$ and which has no pseudo-limit in $L$~\cite[p.~104]{vdD14}. 
Now let $\mu \in \RR((G))$ be any a pseudo-limit of $(x_{\alpha})_{\alpha < \beta}$ in $\RR((G))$, which cannot be an element of $L$.
As in case (\ref{it:cl-indep}), it suffices to show that for any polynomial $p(x) \in L[x]$ we have $p(\lambda) > 0$ if and only if $p(\mu) > 0$.
Now notice that eventually $\norm{p(x_{\alpha}) - p(\lambda)} < \norm{ p(\lambda)}$, since the left hand side is eventually strictly decreasing and $\norm{p(x_{\alpha})}$ is eventually constant.
Indeed, $(x_{\alpha})_{\alpha < \beta}$ is of transcendental type, as $L$ is henselian of equicharacteristic zero, by Lemmas~\ref{lem:cB.implies.henselian} and~\cite[Corollary~4.22]{vdD14}. 
In particular, for all sufficiently large $\alpha$ we have $  \abs{p(x_{\alpha}) - p(\lambda)} <  \abs{ p(\lambda)} $. Hence the sign of $p(x_\alpha)$ is eventually constant, and equal to the sign of $p(\lambda)$. 
The same holds for $p(\mu)$, whence it has the same sign as $p(\lambda)$.
\end{proof}

\subsection{Extending real Weierstrass systems}\label{sec:extending.weierstrass}

Let $A$ be a subset of $K$. The goal of this section is to extend the Weierstrass system $\cB$ with elements from $A$, similar to \cite[Sec.~4.5]{CLip} for separated Weierstrass systems. The following proposition is the main goal.

\begin{prop}\label{prop:1.terms}
Let $\cB$ be a rich real Weierstrass system and let $K$ be an ordered field equipped with $\cB$-analytic structure. 
Given any $A$ subset of $K$, there exists a real Weierstrass system $\cB[A]$ and a $\cB[A]$-analytic structure on $K$ with the following property:
\begin{enumerate}
	\item[] For any $\cL_{\cB}(A)$ term $t_1(x)$ in any number of variables, there exists an $\cL_{\cB[A]}$-term $t_2(x)$ such that $K \models t_1(x) = t_2(x)$, and vice-versa.
%	\item For any $m \in \NN$, $\alpha >1 $ and $f \in B_{m,\alpha}[A]$ there exists a cover of $[-1,1]$ by finitely many closed intervals $I_1,\dots,I_n$ and  $\cL_\cB(A)$-terms $t_1(x),\dots,t_n(x)$ such that $K \models (\forall x \in I_i) (f(x) = t_i(x))$.
%	\item Conversely, for any $\cL_{\cB}(A)$-term $t_1(x)$ there an $\cL_{\cB[A]}$-term $t_2(x)$, such that $K\models t_1(x) = t_2(x)$.
\end{enumerate}
Moreover, if $\cB$ is a strong Weierstrass system then so is $\cB[A]$.
\end{prop}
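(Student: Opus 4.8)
The strategy is to build $\cB[A]$ by adjoining parameters coming from $A$ one at a time, using the embedding $K \hookrightarrow \RR((G))$ from Theorem~\ref{thm:embedding} to reduce to the case where the parameters are Hahn series, and then mimicking the construction of~\cite[Sec.~4.5]{CLip} in this Hahn-series setting. First I would reduce to a single parameter: it suffices to show that for any single $a \in K$ one can find a rich real Weierstrass system $\cB[a] \supseteq \cB$ with a $\cB[a]$-analytic structure on $K$ extending the given one, such that every $\cL_{\cB}(a)$-term equals some $\cL_{\cB[a]}$-term on $K$; iterating (transfinitely, taking unions at limit stages, which is harmless since the Weierstrass axioms and the analytic-structure axioms are of finite character) then handles an arbitrary subset $A$. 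Via Theorem~\ref{thm:embedding} fix an $\cL_{\cB}$-embedding $K \hookrightarrow \RR((G))$ with $\RR((G))$ carrying the natural $\cB$-structure; it is then enough to construct, for a given $a = \sum_i a_i t^{g_i} \in \RR((G))$, a Weierstrass system $\cB[a]$ over a possibly larger group $\Omega' \leq G$ containing the relevant exponents $g_i$, together with the natural $\cB[a]$-structure on $\RR((G))$.

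The concrete construction of $\cB[a]$ follows~\cite[Sec.~4.5]{CLip}: one adjoins to each $B_{n,\alpha}$ all expressions obtained by substituting (components of) $a$, or rescaled/translated copies thereof, into elements of $\cB$ with extra variables, and then closes off under the operations demanded by Definition~\ref{def:real.weierstrass.system}~(1)(a)--(e). Concretely, $B[a]_{n,\alpha}$ should be the $\RR$-subalgebra of $A_{n,\alpha}((\Omega'))$ generated by all $f(\xi, c)$ where $f(\xi, \eta) \in B_{n+\ell,\alpha'}$ for some $\alpha' > \alpha$ and $c$ is a tuple of entries of $a$ lying in the relevant polydisc (after the rescaling of axiom~(1)(c)), together with the coefficient slices forced by~(1)(b). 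The key verifications are then: (i) Weierstrass division (axiom~(2)) for $\cB[a]$ — here is where the uniformity of $\delta$ in $m$ (our amended axiom) is used, exactly as it is used in~\cite{CLip}, since dividing an element $f(\xi,c) \in B[a]_{n,\alpha}$ that is regular in $\xi_1$ reduces via the extra $\eta$-variables to a division in $\cB$ with a $\delta$ not depending on how many further variables the dividend has; (ii) richness of $\cB[a]$, which is immediate since $\RR(\Omega) \subset B_0 \subset (B[a])_0$; (iii) that the natural $\cB$-structure on $\RR((G))$ extends to a natural $\cB[a]$-structure, which again follows from the description of the natural structure via iterated Weierstrass division by the $(\xi_i - a_i)$ in the full system $\cA((G))$ — every generator $f(\xi, c)$ of $\cB[a]$ already lives in $\cA((G))$ and is evaluated there. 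The term-translation property is then essentially a bookkeeping statement: an $\cL_{\cB}(A)$-term is a composition of field operations and symbols $f \in \cB$ with constants from $A$ plugged in; pushing the constants inside using axiom~(1)(c) of Definition~\ref{def:real.weierstrass.system} (rescaling so that arguments land in $[-1,1]$) and the coefficient axiom shows each such composition is realized by an $\cL_{\cB[a]}$-term, and conversely the generators of $\cB[a]$ are by construction $\cL_{\cB}(A)$-terms. Finally, strongness of $\cB[a]$ when $\cB$ is strong: axiom~(3) is about splitting $f(\xi,\eta_1,\eta_2)$ modulo $\eta_1\eta_2 - \eta_3$; for a generator $f(\xi, c, \eta_1, \eta_2)$ of $\cB[a]$ one applies axiom~(3) for $\cB$ to the element $f(\xi, \zeta, \eta_1, \eta_2) \in B_{n+\ell+2,\alpha}$ (treating $\zeta$ as fresh variables) and then substitutes $\zeta = c$ in the resulting decomposition, noting the pieces remain in $\cB[a]$.

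\textbf{Main obstacle.}
The subtle point, and the one I would spend the most care on, is the uniform control of radii of convergence through the construction — ensuring that $\cB[a]_{n,\alpha} \subset A_{n,\alpha}((\Omega'))$ genuinely holds, i.e.\ that substituting a parameter $c$ from the polydisc of radius $\alpha'$ into $f \in B_{n+\ell,\alpha'}$ really produces a series convergent on a polydisc of radius $> \alpha$, and that the Weierstrass division radius $\delta$ for $\cB[a]$ can still be taken independent of the number of variables of the dividend. This is exactly the reason the amended Definition~\ref{def:real.weierstrass.system}~(2) (with $\delta$ independent of $m$) was introduced, and the delicate part is checking that adjoining parameters does not secretly reintroduce a dependence on $m$ — which it does not, because in the division step the parameter-containing element being divided by is $f(\xi,c)$ for a \emph{fixed} $f \in \cB$, so its division data is inherited from $\cB$. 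A secondary, purely organizational difficulty is making the transfinite iteration over all of $A$ coherent: one must check the directed union of the $\cB[A_0]$ over finite $A_0 \subset A$ is again a real Weierstrass system (resp.\ strong, resp.\ rich), which holds because all the relevant axioms quantify over finitely many elements at a time and the index sets are directed.
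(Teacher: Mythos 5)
Your overall route is the same as the paper's: embed $K$ into $\RR((G))$ via Theorem~\ref{thm:embedding}, build $\cB[A]$ by substituting parameters from (the substructure generated by) $A$ into elements of $\cB$ with extra variables, verify the Weierstrass axioms, and observe that the term translation is then pure bookkeeping. The organizational differences (adjoining one parameter at a time with a transfinite union, and allowing parameters anywhere in the polydisc rather than only infinitesimal ones as in Definition~\ref{def:expanded-Weierstrass}) are harmless, though the paper's normalization to infinitesimal parameters is what keeps the top-slice bookkeeping manageable.

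There is, however, a genuine gap in your verification of Weierstrass division for $\cB[A]$, which is the heart of the matter. You assert that dividing by a regular element $f(\xi,c)\in B_{n,\alpha}[A]$ ``reduces via the extra $\eta$-variables to a division in $\cB$'' because ``its division data is inherited from $\cB$''. This is not automatic: regularity of $f(\xi,c)$ in $\xi_1$ is a condition on the top slice of the substituted series inside $\cA((G))$, and it implies neither that $f(\xi,\eta)\in\cB$ is regular in $\xi_1$ at $0$, nor that it is regular of the same degree. For instance, with $g\in\Omega_{>0}$ (so $t^{-g}\in B_0$ by richness) and an infinitesimal parameter $\varepsilon$ of Gauss norm $t^{g}$, the element $f(x,y)=t^{-g}xy$ yields $f(x,\varepsilon)$ equal to a unit times $x$, regular of degree $1$, while $f(x,y)$, and every $B_0$-multiple of it, vanishes identically at $y=0$; so one cannot Weierstrass-divide by the chosen representative in $\cB$ and then specialize. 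The paper's proof of Lemma~\ref{lem:extended_Weierstrass} addresses exactly this via Lemma~\ref{lem:change_of_var}: using the Strong Noetherian Property of~\cite{CLip}, one proves $\norm{f_i(\varepsilon)}\leq 1$ for the coefficients, splits off their real parts $c_i$, and manufactures a new $g(x,z)\in\cB$ with $\norm{g}=1$, $g_0(x,0)=f_0(x,\varepsilon)$ and $g(x,h(\varepsilon))=f(x,\varepsilon)$ for an infinitesimal tuple $h(\varepsilon)$; regularity then transfers to $g$, the division is carried out in $\cB$ against $g$, and one substitutes $z=h(\varepsilon)$. This change-of-variables step is the real content of the construction and is missing from your plan; by contrast, the radius-uniformity issue you single out as the main obstacle is dispatched in one line once the division has been reduced to $\cB$ (the $\delta$ there is independent of the dividend). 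A similar, smaller omission occurs for axiom (1)(d): normalizing $f(x,\varepsilon)$ requires identifying its Gauss norm through a coefficient $f_{\mu_0}(\varepsilon)$, which your ``close off under the operations'' step does not make explicit.
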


The importance of the above proposition is that it allows one to understand terms with parameters from $A$ by understanding the Weierstrass system $\cB[A]$.
For the latter, we can use the tools developed in work by Cluckers--Lipshitz--Robinson~\cite{CLRr}.
First, we define how to extend parameters for substructures of $\RR((G))$. 
The general case can be reduced to this, via Theorem~\ref{thm:embedding}.
%One of the issues when extending a Weierstrass system with parameters is that the elements of a real Weierstrass system are always built up from converging power series on $\RR$.
%Compare this to separated Weierstrass systems~\cite{CLip}, where one starts much more generally with a ring together with a fixed ideal. 
%In such a situation extending a separated Weierstrass is more straightforward.
%
\begin{defn} \label{def:expanded-Weierstrass}
Let $\cB$ be a real Weierstrass system over $\Omega \leq G$.
Given an $\cL_\cB$-substructure $A$ of $\RR((G))$, we define a family of $\RR$-algebras $ \cB[A] = \{ B_{m,\alpha}[A]\}_{m,\alpha}$ where $m$ ranges over $\NN$ and $\alpha$ over $\RR_{>0}$: 
\[ B_{m,\alpha}[A] \coloneqq \{ f(x,\varepsilon) \mid f(x,y) \in B_{m+n,\alpha}, \varepsilon \in (A\cap \RR((G))^{\circ \circ})^n, n \in \NN   \} \subset \cA_{m,\alpha}((G)). \]
When $A$ is not a substructure, define $\cB[A]$ as $\cB[\langle A \rangle]$, where $\langle A \rangle$ is the $\cL_\cB$-substructure of $\RR((G))$ generated by $A$.
\end{defn}
\begin{lem} \label{lem:extended_Weierstrass}
The family of algebras $\cB[A] \subset \cA((G))$ as defined in Definition \ref{def:expanded-Weierstrass} is a real Weierstrass system.
Additionally, if $\cB$ is a strong real Weierstrass system, then so is $\cB[A]$.
\end{lem}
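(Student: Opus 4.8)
The plan is to verify each axiom of Definition~\ref{def:real.weierstrass.system} for the family $\cB[A]$, reducing everything to the corresponding property of $\cB$ together with the observation that the adjoined parameters $\varepsilon$ lie in $\RR((G))^{\circ \circ}$, so that substituting them preserves Gauss norms and radii of convergence. Throughout we may assume $A$ is an $\cL_\cB$-substructure of $\RR((G))$, as in Definition~\ref{def:expanded-Weierstrass}. The inclusions $\RR[\xi_1, \ldots, \xi_m] \subset B_{m,\alpha}[A] \subset \cA_{m,\alpha}((G))$ are immediate: the first because $\cB \subset \cB[A]$, the second because evaluating a power series $f(x,y) \in \cA_{m+n,\alpha}((G))$ at $y = \varepsilon$ with $\norm{\varepsilon_i} < 1$ yields an element of $\cA_{m,\alpha}((G))$ (the radius of convergence does not shrink below $\alpha$ when substituting topologically nilpotent elements).

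Next I would go through axiom~(1). For~\ref{it:ax_radii}: monotonicity in $m$ and $\alpha$ is inherited from $\cB$, and $B_{0,\alpha}[A] = B_{0,\alpha'}[A]$ since the parameters $\varepsilon$ range over the fixed set $A \cap \RR((G))^{\circ\circ}$ independently of $\alpha$, noting that this set is already a ring (as $A$ is a substructure). For~\ref{it:ax_coeff}: given $f(x,\varepsilon) \in B_{m+\ell,\alpha}[A]$, write $f = \sum_\mu f_\mu(\xi_1,\ldots,\xi_m)\eta^\mu$; applying~\ref{it:ax_coeff} for $\cB$ to the ``pre-substitution'' power series shows each coefficient is again a substitution instance, hence lies in $B_{m,\alpha}[A]$. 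For~\ref{it:ax_rescaling}: substitution and rescaling commute --- rescaling $f(x,\varepsilon)$ by $r\xi + a$ equals $(f(r\cdot\xi + a, y))|_{y=\varepsilon}$, and $f(r\xi+a,y) \in B_{m,\delta}$ by~\ref{it:ax_rescaling} for $\cB$, with the same $\delta$. For~\ref{it:ax_norm}: given $f(x,\varepsilon)$, first use~\ref{it:ax_norm} for $\cB$ to normalize the pre-substitution $f(x,y)$; since substituting $\varepsilon$ with $\norm{\varepsilon_i} < 1$ can only decrease the Gauss norm, one may then additionally multiply by a suitable element of $B_0 \subset B_0[A]$ to rescale back to norm $1$ --- here one must be slightly careful, but the key point is that $\norm{f(x,\varepsilon)} \le \norm{f(x,y)}$ always, and after normalizing $f(x,y)$ to have norm $1$, the top slice behaviour is controlled. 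For~\ref{it:ax_permutation}: permuting the $x$-variables of $f(x,\varepsilon)$ is the substitution instance of the corresponding permutation of $f(x,y)$ in the $x$-block, which lies in $B_{m+\ell,\alpha}$ by~\ref{it:ax_permutation} for $\cB$.

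The main work is axiom~(2), Weierstrass division, and this is where the condition (built into our amended Definition~\ref{def:real.weierstrass.system}) that $\delta$ does not depend on the number of variables of $g$ becomes essential. Suppose $f(x,\varepsilon) \in B_{m,\alpha}[A]$ has $\norm{f(x,\varepsilon)} = 1$ and is regular in $\xi_1$ of degree $s$ at $0$, where $f(x,y) \in B_{m+\ell,\alpha}$ and $\varepsilon \in (A \cap \RR((G))^{\circ\circ})^\ell$. Because $\norm{\varepsilon_i} < 1$, regularity and norm-$1$-ness of $f(x,\varepsilon)$ imply the same for $f(x,y)$ viewed in $B_{m+\ell,\alpha}$, regular in $\xi_1$; apply Weierstrass division in $\cB$ to get a radius $\delta$ (depending only on $f(x,y)$) such that every $g(x,\varepsilon') \in B_{m'+\ell',\alpha}$ admits $g = Q f + \sum_{i<s} R_i \xi_1^i$ in $\cB$ with the norm bounds. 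Substituting $y = \varepsilon$ (and the parameters of $g$) and using that substitution is a continuous ring homomorphism preserving the $\xi_1$-degree of the remainder terms, we obtain the decomposition in $\cB[A]$. Uniqueness transfers because the $\RR((G))$-valued evaluation is injective on these rings, or more directly because a nontrivial relation $Qf + \sum R_i\xi_1^i = 0$ in $\cB[A]$ would be a substitution instance of such a relation, contradicting uniqueness in $\cB$. The one genuine subtlety to handle carefully is that the same element of $B_{m,\alpha}[A]$ may arise from different pairs $(f(x,y),\varepsilon)$; one resolves this by fixing, for the statement of the division axiom, a single such presentation of $f$ and checking that the output is independent of the presentation of $g$ up to the established uniqueness. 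Finally, for the strong Weierstrass axiom~(3): given $f(\xi,\eta_1,\eta_2,\varepsilon) \in B_{n+2}[A]$ arising from $f(\xi,\eta_1,\eta_2,y) \in B_{n+3+\ell}$ wait --- more simply, apply axiom~(3) for $\cB$ to $f(\xi,\eta_1,\eta_2,y) \in B_{n+2+\ell,\alpha}$ treating the $y$-variables as part of the $\xi$-block, obtaining $f_1, f_2, Q$ in $\cB$; substituting $y = \varepsilon$ gives the required decomposition in $\cB[A]$, since substitution commutes with the algebraic identity $f = f_1 + \eta_2 f_2 + Q(\eta_1\eta_2 - \eta_3)$.
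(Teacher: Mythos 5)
The overall strategy (verify each axiom of Definition~\ref{def:real.weierstrass.system} by reducing to $\cB$ and then substituting the infinitesimal parameters) is the same as the paper's, and your treatment of axioms (1a)--(1c), (1e) and of the strong axiom (3) is fine. But the Weierstrass division step contains a genuine gap: you claim that if $\norm{f(x,\varepsilon)}=1$ and $f(x,\varepsilon)$ is regular in $\xi_1$, then the pre-substitution series $f(x,y)\in B_{m+\ell,\alpha}$ has norm $1$ and is regular in $\xi_1$ as well. This is false, because substituting $\varepsilon$ can produce cancellations that lower the Gauss norm of the presentation. Concretely, take $\omega\in\Omega_{>0}$, $\varepsilon=t^{\omega}$ and $f(x_1,y)=t^{-\omega}y-1+x_1$ (which lies in $B_{2,\alpha}$ as soon as $t^{-\omega}\in B_0$, e.g.\ for a rich or full system). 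Then $f(x_1,\varepsilon)=x_1$ has norm $1$ and is regular in $x_1$ of degree $1$, while $\norm{f(x_1,y)}=t^{-\omega}>1$; after rescaling by an element of $B_0$ to norm $1$, the top slice of $t^{\omega}f(x_1,y)=y-t^{\omega}+t^{\omega}x_1$ is $y$, which is not regular in $x_1$ of any degree, so Weierstrass division in $\cB$ cannot be applied to (any $B_0$-rescaling of) this presentation. This is exactly the difficulty the paper's proof is built around: Lemma~\ref{lem:change_of_var} uses the Strong Noetherian Property to replace the given presentation by a new one, $f(x,\varepsilon)=g(x,h(\varepsilon))$ with $h(\varepsilon)$ infinitesimal, $\norm{g}=1$ and $g_0(x,0)$ equal to the top slice of $f(x,\varepsilon)$ (hence $g$ regular of the same degree), and only then divides by $g$ in $\cB$ and substitutes. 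Your proposal has no substitute for this re-presentation step, which is the main content of the lemma; your closing remark about dependence on the presentation of $f$ gestures at the issue but does not resolve it.

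A secondary weakness of the same nature appears in your verification of axiom (\ref{it:ax_norm}): the Gauss norms of elements of $\cB[A]$ lie in $t^{G}$, not in $t^{\Omega}$, so multiplying by elements of $B_0$ alone cannot in general normalize (e.g.\ $f(x_1,\varepsilon)=\varepsilon x_1$ with $\norm{\varepsilon}=t^{g}$, $g\notin\Omega$). One must use a constant from $B_0[A]$ itself; the paper writes $f(x,\varepsilon)=\sum_{\mu}f_{\mu}(\varepsilon)x^{\mu}$, picks $\mu_0$ realizing the maximal $\norm{f_{\mu}(\varepsilon)}$, and multiplies by $f_{\mu_0}(\varepsilon)^{-1}$. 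Your uniqueness argument for the division is also shakier than needed (a relation holding after substitution need not lift to one before substitution), but this is easily repaired since $\cB[A]\subset\cA((G))$ and division is unique in the full system; the essential missing ingredient remains Lemma~\ref{lem:change_of_var}.
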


The following computation is the key technical tool for verifying Weierstrass division for $\cB[A]$.

\begin{lem} \label{lem:change_of_var}
Let $\cB$ be a real Weierstrass system over $G$. Suppose that $\varepsilon \in (\RR((G))^{\circ \circ})^n$ and $f(x,y) \in B_{m + n, \alpha}$ are such that  $\norm{f(x,\varepsilon)} = 1$. Then there exists some $\delta\leq \alpha$, $\ell \in \NN$, $g(x,z) \in B_{m + \ell, \delta}$ and some $h(y) \in (B_{n, \delta})^{\ell}$ such that $\norm{h(\varepsilon)} < 1$ and
\begin{equation} \label{eq:g-conditions}
	\left\{  
	\begin{array}{ll}
		\norm{g(x,z)} 			&= 1, \\
		g_0(x,0)				&= f_0(x,\varepsilon), \\
		g(x,h(\varepsilon))   	&= f(x,\varepsilon).
	\end{array} \right.
\end{equation} 
\end{lem}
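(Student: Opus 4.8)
The goal is to ``linearize'' the parameter-dependence of $f(x,\varepsilon)$: replace $\varepsilon$ by new variables $z$ that can be plugged back in via infinitesimal data $h(\varepsilon)$, while making the top slice of the resulting series independent of $z$. The key idea is to expand $f(x,y)$ around the residues $\varepsilon^{\circ} \in \RR^n$ of the components of $\varepsilon$. Write $\varepsilon = \varepsilon^{\circ} + \eta$ where $\eta = (\eta_1,\dots,\eta_n)$ has all components in $\RR((G))^{\circ\circ}$; since $\norm{f(x,\varepsilon)} = 1$, the point $\varepsilon^{\circ}$ lies in the interior of the polydisc of convergence, so by axiom~(\ref{it:ax_rescaling}) (rescaling/translation) the series $\tilde f(x,y') \coloneqq f(x, \varepsilon^{\circ} + y')$ lies in $B_{m+n,\alpha'}$ for some $\alpha' > 0$, and $\tilde f(x,\eta) = f(x,\varepsilon)$.

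\textbf{Main steps.} First, normalize: we may assume $\norm{\tilde f} = 1$, since $\norm{\tilde f(x,\eta)} = \norm{f(x,\varepsilon)} = 1$ forces the Gauss norm of $\tilde f$ to be $\geq$ its value after the infinitesimal substitution, and in fact equal (substituting infinitesimals into the $y'$-variables cannot lower the Gauss norm below $1$, and the top slice of $\tilde f$ evaluated at $y' = \eta$ reduces to the top slice of $\tilde f(x,0)$ modulo the maximal ideal). Indeed $\tilde f(x,0) = f(x,\varepsilon^{\circ})$ is a series in $A_{m,\alpha'}((\Omega))$ whose top slice, evaluated further, matches the top slice of $f(x,\varepsilon)$. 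So after this step $g(x,y') \coloneqq \tilde f(x,y')$ already satisfies $\norm{g} = 1$ and $g_0(x,0) = f_0(x,\varepsilon)$ (using that the reduction of $\eta$ is zero). We then set $\ell = n$, $z = y'$, $\delta = \alpha'$, $h(y) = y - \varepsilon^{\circ}$ — but $h$ must be an $n$-tuple of elements of $B_{n,\delta}$, i.e.\ honest elements of the Weierstrass system in the variables $y$; since $\varepsilon^{\circ} \in \RR^n \subset B_0$ and the coordinate functions $y_j$ lie in $\RR[y] \subset B_{n,\delta}$, indeed $h_j(y) = y_j - \varepsilon_j^{\circ} \in B_{n,\delta}$. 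Finally $h(\varepsilon) = \varepsilon - \varepsilon^{\circ} = \eta$ has $\norm{\eta} < 1$, and $g(x,h(\varepsilon)) = \tilde f(x,\eta) = f(x,\varepsilon)$, as required.

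\textbf{The obstacle.} The delicate point is verifying $g_0(x,0) = f_0(x,\varepsilon)$ rigorously: one must track how taking the top slice interacts with the infinitesimal translation $\varepsilon = \varepsilon^{\circ} + \eta$. The cleanest argument is to work in $A_{m+n,\alpha'}((\Omega))$ (which contains $\cB$ by definition) and note that $f(x,\varepsilon) = \tilde f(x,\eta)$ where $\tilde f(x,y') = \sum_i \tilde f_{i}(x,y') t^i$ with each $\tilde f_i \in A_{m+n,\alpha'}$; since all components of $\eta$ have Gauss norm strictly below $1$, substituting $y' = \eta$ does not create cancellation in the lowest-order ($t^{i_0}$, $i_0 = 0$ after normalization) term, and the $t^0$-slice of $\tilde f(x,\eta)$ is exactly $\tilde f_0(x,0) = g_0(x,0)$, because any monomial in $\tilde f_0$ involving a positive power of some $y'_j$ contributes something of strictly positive valuation after the substitution $y'_j \mapsto \eta_j$. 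This also confirms $\norm{g} = \norm{\tilde f} = 1$ is forced rather than merely arranged. The rest is bookkeeping with the rescaling axiom and the closure properties of $\cB$.
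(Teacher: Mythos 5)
There is a genuine gap, stemming from two related problems. First, you have misread the hypothesis: $\varepsilon$ lies in $(\RR((G))^{\circ\circ})^n$, i.e.\ its components are \emph{infinitesimal}, so their residues $\varepsilon^{\circ}$ are all zero. Your translation $\tilde f(x,y') = f(x,\varepsilon^{\circ}+y')$ is therefore vacuous, and your construction collapses to the trivial choice $g=f$, $h=\mathrm{id}$, $\ell=n$. Second, and more seriously, the step ``we may assume $\norm{\tilde f}=1$'' is not justified and is false in general: from $\norm{f(x,\varepsilon)}=1$ one only gets $\norm{f}\geq 1$, and the inequality can be strict because $f$ may have slices of negative valuation which become bounded only after the substitution $y=\varepsilon$. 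For instance $f(x,y)=t^{-1}y_1+x_1$ with $\varepsilon_1=t+t^2$ has $\norm{f(x,\varepsilon)}=1$ but $\norm{f}=t^{-1}>1$; here $g=f$ violates the condition $\norm{g(x,z)}=1$, and rescaling $f$ by an element of $B_0$ to normalize its Gauss norm destroys the required identity $g(x,h(\varepsilon))=f(x,\varepsilon)$. Your supporting claim that ``substituting infinitesimals cannot lower the Gauss norm below $1$'' is also off target: substitution \emph{can} lower the norm (e.g.\ $f=y_1-t$, $\varepsilon_1=t$, where cancellation between slices occurs), and the actual danger is that $\norm{f}$ exceeds $1$, which your argument cannot repair.

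This unbounded-coefficient phenomenon is precisely the content of the lemma, and it is why the new variables $z$ cannot simply be a recentering of $y$. The paper's proof uses the Strong Noetherian Property to write $f(x,y)=\sum_{\abs{i}\leq d} x^i f_i(y) U_i(x,y)$ with each $U_i$ a unit of norm $1$, proves by induction on the multi-index $i$ (comparing coefficients of $x^i$ in $f(x,\varepsilon)$) that every evaluated coefficient satisfies $\norm{f_i(\varepsilon)}\leq 1$, splits $f_i(\varepsilon)=c_i+\gamma_i$ with $c_i\in\RR$ and $\gamma_i$ infinitesimal, and then introduces one new variable $v_i$ per multi-index, setting $g(x,y,v)=\sum_i x^i(c_i+v_i)U_i(x,y)$ and $h=(\mathrm{id},(f_i(y)-c_i)_i)$. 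The new variables thus absorb the evaluated, recentered coefficients $f_i(\varepsilon)-c_i$, which is what makes $\norm{g}=1$ and $g_0(x,0)=f_0(x,\varepsilon)$ achievable in general. Your argument only covers the special case $\norm{f}=1$ (where indeed $g=f$, $h=\mathrm{id}$ works by the slice computation you sketch), which trivializes the statement and does not suffice for its application to Weierstrass division in $\cB[A]$.
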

\begin{proof}
By the Strong Noetherian Property~\cite[Thm.\,3.2.2]{CLip}, we may write
\[ f(x,y) = \sum_{\abs{i} \leq d} x^i f_i(y) U_i(x,y) \]
such that for each multi-index $i$, we have $\norm{U_i} = 1$ and $U_i(x,y) \in B_{m,\delta}^{\times}$, for some $\delta > 0$.
As the top slice of each $U_i(x,y)$ is a unit in $\RR[[x,y]]$, it follows that $U_i(0,0)^{\circ} \neq 0$. 
Also note that $U_i(0,\varepsilon)^{\circ} = U_i(0,0)^{\circ}$. 
Together, this implies $\norm{U_i(0,\varepsilon)} = 1$,.

We now prove that $\norm{f_i(\varepsilon)} \leq 1$ for all $i$ with $\abs{i} \leq d$, by induction for the coordinate-wise partial order on $\NN^n$. 
For $i =0$, the product $f_0(\varepsilon) U_0(0,\varepsilon)$ is the constant term of $f(x,\varepsilon)$. 
This implies that $\norm{f_0(\varepsilon) U_0(0,\varepsilon)} \leq 1$, which guarantees $\norm{f_0(\varepsilon)} \leq 1$, since $\norm{U_0(0,\varepsilon)} = 1$.
Now consider the case $\abs{i} > 0$. Write $J$ for the set of all $j \in \NN^n \setminus \{0\}$ such that $i - j \in \NN^n$. 
For all $j \in J$, we may write
\[ U_j(x,y) = \sum_{k \in \NN^n} x^k V_{j,k}(y), \]
where for all $k$ we have $\norm{V_{j,k}} \leq 1$.
Then the coefficient of $x^{i}$ in $f_0(x,\varepsilon)$ is
\[ f_i(\varepsilon) U_i(0,\varepsilon) + \sum_{j \in J} f_j(\varepsilon) V_{j,i-j}(\varepsilon). \]
By induction, the sum over $J$ belongs to $\RR((G))^{\circ}$. Again using that $\norm{U_i(0,\varepsilon)} = 1$, it follows that also $\norm{f_i(\varepsilon)} \leq 1$.

For each $i$ with $\abs{i} \leq d$, write $f_i(\varepsilon) = c_i + \gamma_i$, with $c_i \in \RR \subset B_0$ and $\gamma_i \in \Kinf$.
Let $z$ be a new tuple of variables, indexed by $I = \{ i\in  \NN^n \mid \abs{i} \leq d\}$. Then
\[ g(x,y,v) = \sum_{\abs{i} \leq d} x^i (c_i + v_i) U_i(x,y)  \]
satisfies the conditions of (\ref{eq:g-conditions}), with $z = (y,v)$ and $h = (\operatorname{id},(f_i(y) -c_i)_i )$.
\end{proof}
\begin{proof}[Proof of Lemma~\ref{lem:extended_Weierstrass}]
We verify the conditions of Definition \ref{def:real.weierstrass.system}. 
Conditions (\ref{it:ax_radii}),(\ref{it:ax_coeff}),(\ref{it:ax_rescaling}) and (\ref{it:ax_permutation}) follow in a straightforward way from the corresponding conditions for $\cB$.
For condition (\ref{it:ax_norm}), take any nonzero $f(x,\varepsilon) \in B_{m,\alpha}[A]$ and write $f(x,\varepsilon) = \sum_{\mu} f_{\mu}(\varepsilon) x^{\mu}$.
Since $f(x,\varepsilon)$ belongs to the full Weierstrass system $\cA((G))$ it has a well-defined Gauss norm $\norm{f(x,\varepsilon)}$, which is the maximum among all $\norm{f_{\mu}(\varepsilon)}$. 
Now let $\mu_0$ be any $\mu$ for which this maximum is attained. Then $\norm{ f_{\mu_0}(\varepsilon)^{-1} f(x,\varepsilon)  } = 1$, as required.
	
	%	First notice that $\cB[A]$ is a family of $\RR$-algebras, because $\cB$ is.
	
%	\paragraph{(a)(i)} From the condition (a)(i) for $\cB$, it follows $B_{m,\alpha}[A] \subset B_{m',\alpha'}[A]$ whenever $\alpha' \leq \alpha$ and $m \leq m'$. Now let $\alpha >0$ and take any $b \in B_{0,\alpha}[A] $. We need to show that for all $\alpha' > \alpha $ we also have $b \in B_{0,\alpha'}[A]$. So write $b = f(a)$ for some $f \in B_{n,\alpha}$ and $a \in A^n \cap [- \alpha,\alpha]^n$. By axiom (a)(iii), we find that $f(\frac{\alpha}{\alpha'} y )$ belongs to $B_{n,\alpha'}$. Now set $a' = \frac{\alpha'}{\alpha}a \in F^n \cap [-\alpha',\alpha']^n$. Then indeed $b = f(\frac{\alpha}{\alpha'} a' ) \in B_{0,\alpha'}[A]$, as desired.\todo{reword to make it clearer}
%	
%	\paragraph{(a)(ii)} Let $f(x,z,a) \in B_{m+\ell,\alpha}[A]$ and write $f(x,z,y) = \sum_{\mu} \bar{f}_{\mu}(x,y) z^{\mu}$. It follows from (a)(ii) for $\cB$ that $\bar{f}_\mu(x,a) \in B_{m,\alpha} $ for all $\mu$.
%	
%	\paragraph{(a)(iii)} If $f(x,a) \in B_{m,\alpha}[A]$, then also $f(rx,ra) \in B_{m,\alpha/r}$ by (a)(iii) for $\cB$. The argument for translations is similar.
	
%	\paragraph{(a)(iv)} Take $f(x,a) \in B_{m,\alpha}[A]$. 
%	%By Weierstrass division $f(x,a)$ belongs to the full Weierstrass system $\cA((G))$ and thus has a well-defined Gauss norm. 
%	Now write $f(x,a) = \sum_{\mu} \bar{f}_{\mu}(a) x^{\mu}$ and observe that $\norm{f(x,a)}$ is the maximum among all $\norm{\bar{f}_{\mu}(a)}$. Now let $\mu_0$ be a $\mu$ for which this maximum is attained. Then $\norm{ \bar{f}_{\mu_0}(a)^{-1} f(x,a)  } = 1$, as required.

To verify that Weierstrass division holds, let $f(x,\varepsilon) \in B_{m,\alpha}$ be regular in $x_m$ and let $k(x,\gamma) \in B_{m,\alpha}[A]$ be arbitrary.
Lemma \ref{lem:change_of_var} gives a $g(x,z)$ which is regular in $x_m$. 
Then Weierstrass division in $\cB$ yields
\[ k(x,u) = g(x,z) Q(x,z,u)  + R(x,z,u) \]
in some $B_{m + n + \ell + k,\delta}$, where $\delta \in \RR$ does not depend on $k(x,u)$.
Now plug in the tuples of infinitesimals $z = h(\varepsilon)$ and $u = \gamma$.

Finally, since condition (\ref{it:ax_strong})(3) allows parameters, it follows that $\cB[A]$ is a strong real Weierstrass system whenever $\cB$ is.
\end{proof}

\begin{proof}[Proof of Proposition~\ref{prop:1.terms}]
By Theorem \ref{thm:embedding}, we may view $K$ as an $\cL_\cB$-substructure of some $\RR((G))$, equipped with the natural $\cB$-analytic structure, where $\cB\subset \cA((G))$.
Let $\cB[A] \subset \cA((G))$ be constructed as in Definition \ref{def:expanded-Weierstrass}, using the inclusions $A \subset K \subset \RR((G)) $.

If $t(x)$ is an $\cL_{\cB[A]}$-term, then by construction it is also an $\cL_{\cB}(A)$-term. Conversely, since we have symbols for the elements of $A$ in $\cL_{\cB[A]}$, any $\cL_{\cB[A]}$-term is also naturally a $\cL_{\cB}(A)$-term.
\end{proof}
\subsection{Rings of analytic functions}
To analyze 1-terms with parameters in $\cL_{\cB}$, we will use the notion of rings of analytic functions from \cite{CLRr}.
First, we recall some notation for intervals and annuli.
Given $c,r \in K$, $r > 0$, write $I(c,r)$ for the closed interval with center $c$ and radius $r$:
\[ I(c,r) = \{ x \in K \mid \abs{x - c} \leq r \}  .\]
For $c,\delta,\varepsilon \in K$, we denote by $D(c,\delta,\varepsilon)$ the annulus with center $c$, inner radius $\delta$ and outer radius $\varepsilon$:
\[ D(c,\delta,\varepsilon) = \{ x \in K \mid \delta < \abs{x - c} < \varepsilon \}. \]
\begin{defn} \label{def:an_ring}
	Let $K$ be a field with $\cB$-analytic structure, where $\cB$ is a strong real Weierstrass system.
	For any interval $I = I(c,r)$ and any annulus $D = D(c,\delta,\varepsilon)$, define the rings of analytic functions on $I$ and $D$ as follows:
	\begin{align*}
		\cO_I(\cB) &\coloneqq \left\{ f\left(\frac{x - c}{r}\right) \middle| f \in B_{1,\alpha} \text{ for some } \alpha \in \RR_{>1}  \right\},  \\
		\cO_D(\cB) &\coloneq \left\{ g\left(\frac{\delta}{x - c}\right) + h \left(\frac{x - c}{\varepsilon}\right) 
		\middle| g,h \in B_{1,\alpha} \text{ for some } \alpha \in \RR_{>1} \text{ and } g(0) = 0 \right\}.
	\end{align*}
	We will also write $\cO_I$, $\cO_D$ instead of $\cO_I(\cB)$, $\cO_D(\cB)$ when the Weierstrass system $\cB$ is clear from the context.
\end{defn}
\begin{remark}
	The requirement that $\cB$ is strong, guarantees that $\cO_D(\cB)$ is, in fact, a ring.
	For more details, see \cite{CLip} and \cite[Remark~3.1]{CLRr}.
\end{remark}
\begin{remark}
	For each $I = I(c,r)$ and $D = D(c,\delta,\varepsilon)$, the rings $\cO_I$ and $\cO_D$ are normed $\RR$-vector spaces, for the respective norms
	\begin{align*}
		&\left \lVert {f\left(\frac{x - c}{r}\right)} \right \rVert = \norm{f}, \\
		&\left \lVert {h\left(\frac{x - c}{\varepsilon}\right) + g\left(\frac{\delta}{x - c}\right)} \right \rVert = \max\{\norm{h},\norm{g}\},
	\end{align*}
	where on the right-hand side $\norm{\cdot}$ stands for the Gauss norm in $B_{1,\alpha}$, for an appropriate $\alpha > 1$.
	Moreover, the norm on $\cO_I$ is multiplicative and the norm on $\cO_D$ is submultiplicative, see~\cite[Remark~3.1]{CLRr}.
\end{remark}
Let $v: K^\times\to \Gamma$ be any convex valuation on $K$, which is automatically henselian by~\cite[Prop.\,2.9]{delon-farre}. We will denote this valuation additively. Note that $v$ is a coarsening of the natural valuation $\norm{\cdot}$.
We first prove a series of lemmas, showing how to control $\rv_\lambda$ of these units.
\begin{lem} \label{lem:Lip-cont}
	Let $f \in B_{1,\alpha}$ with $\norm{f} \leq 1$, $\alpha > 1$ and $a,b \in [-1,1]_K$ .
	Then $v(f(a) - f(b)) \geq v(a - b)$.
\end{lem}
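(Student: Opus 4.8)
The plan is to reduce the statement to a basic Lipschitz-type estimate for elements of $B_{1,\alpha}$ and then transport it through $\sigma$. Write $f(x) = \sum_{i\geq 0} c_i x^i$ with $c_i \in B_0$, where $\norm{f}\leq 1$ forces $\norm{c_i}\leq 1$, i.e.\ each $c_i$ lies in $K^\circ$. The key algebraic identity is the factorization
\[
f(X) - f(Y) = (X - Y)\, g(X,Y), \qquad g(X,Y) = \sum_{i\geq 1} c_i \sum_{j=0}^{i-1} X^j Y^{i-1-j}.
\]
I would first argue that $g(X,Y)\in B_{2,\alpha'}$ for some $\alpha' \in (1,\alpha)$ with $\norm{g}\leq 1$: this is essentially the statement that the divided difference of a power series with radius of convergence $>\alpha$ again has radius of convergence $>\alpha'$ (for any $\alpha' < \alpha$), together with coefficient bounds, and it should follow from the Weierstrass-system axioms — concretely by Weierstrass-dividing $f(X) - f(Y)$ by the degree-one regular polynomial $X - Y$ in two variables (after the harmless substitution making $X-Y$ a coordinate), using axiom~(\ref{it:ax_Wdiv}) and the norm bound $\norm{g}\leq \norm{f(X)-f(Y)}\leq 1$. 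Note that $\alpha > 1$ is exactly what gives us room to land in some $B_{2,\alpha'}$ with $\alpha' > 1$, so that $\sigma$ is defined on $[-1,1]_K^2$.

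Next, applying $\sigma$ (which is an $\RR$-algebra homomorphism compatible with composition and with the coordinate inclusions, by Definition~\ref{def:real.analytic.structure}) and evaluating at $(a,b)\in[-1,1]_K^2$ gives
\[
\sigma(f)(a) - \sigma(f)(b) = (a-b)\cdot \sigma(g)(a,b).
\]
By Lemma~\ref{le:automatic_continuity}, since $\norm{g}\leq 1$ we get $\norm{\sigma(g)(a,b)}\leq 1$, i.e.\ $\sigma(g)(a,b)\in K^\circ$. Since $v$ is a coarsening of the canonical valuation $\norm{\cdot}$, the valuation ring $K^\circ$ is contained in $\cO_K$, so $v(\sigma(g)(a,b))\geq 0$. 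Therefore
\[
v\big(f(a) - f(b)\big) = v(a-b) + v\big(\sigma(g)(a,b)\big) \geq v(a-b),
\]
which is the desired inequality (here I am writing $f(a)$ for $\sigma(f)(a)$ as the paper does).

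The main obstacle is the first step: justifying cleanly that the divided-difference series $g(X,Y)$ is a genuine element of the Weierstrass system $\cB$ (in two variables, with a good radius and norm bound), rather than merely a formal power series in $A_{2,\alpha'}((\Omega))$. The cleanest route is the Weierstrass division argument sketched above — regularity of $X-Y$ in $Y$ of degree $1$ after reindexing variables (using axiom~(\ref{it:ax_permutation})), then axiom~(\ref{it:ax_Wdiv}) produces $Q\in B_{2,\delta}$ and a remainder $R_0(X)\in B_{1,\delta}$ with $g = f(X) - f(Y) = (Y - X)\cdot(-Q) + R_0(X)$; comparing with the formal identity forces $R_0 = 0$ and $Q = -g$, and the built-in bound $\norm{Q}\leq \norm{f(X)-f(Y)}\leq 1$ gives $\norm{g}\leq 1$. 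One should double-check that the division is set up so the resulting $\delta$ is still $>1$; since $\alpha > 1$ this is arranged by first rescaling via axiom~(\ref{it:rescale}) if necessary, or simply by noting the hypothesis $\alpha>1$ is there precisely to leave slack. Everything after that is a one-line application of Lemma~\ref{le:automatic_continuity} and the coarsening relation $K^\circ\subseteq\cO_K$.
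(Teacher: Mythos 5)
Your overall strategy is exactly the paper's: write $f(x)-f(y)=(x-y)g(x,y)$ via Weierstrass division, bound $\norm{g}\leq 1$ using the norm clause of axiom~(\ref{it:ax_Wdiv}), apply Lemma~\ref{le:automatic_continuity} to get $\sigma(g)(a,b)\in K^{\circ}\subset\cO_K$, and conclude since $v$ coarsens the canonical valuation. However, there is a genuine gap at the one step you yourself flag and then wave away: you need $g$ to lie in some $B_{2,\alpha'}$ with $\alpha'>1$ so that $\sigma(g)$ is defined at an arbitrary point $(a,b)\in[-1,1]_K^2$, but axiom~(\ref{it:ax_Wdiv}) only asserts the existence of \emph{some} $\delta\in\RR_{>0}$, with no lower bound in terms of $\alpha$; the regularity of the divisor is only ``at $0$'', and nothing in Definition~\ref{def:real.weierstrass.system} forces $\delta>1$ even when $\alpha>1$. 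Your proposed repairs do not work as stated: rescaling via axiom~(\ref{it:rescale}) applied to $g\in B_{2,\delta}$ produces an element of some $B_{2,\alpha'}$ with $\alpha'>1$ only at the cost of shrinking the set of points of $K$ at which the identity can be evaluated to $[-\delta/2,\delta/2]_K^2$ (it enlarges the formal radius by contracting the variables, it does not extend the domain of validity), and ``$\alpha>1$ leaves slack'' is not an argument, since membership in $B_{2,\delta}$ for small $\delta$ does not upgrade to membership in $B_{2,\alpha'}$ for $\alpha'>\delta$ (the inclusions of axiom~(\ref{it:ax_radii}) go the other way).

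The paper closes this gap with a case split that you should adopt. If $a-b$ is not infinitesimal, then $\norm{a-b}=1$, hence $v(a-b)=0$, while $f(a),f(b)\in K^{\circ}\subset\cO_K$ by Lemma~\ref{le:automatic_continuity}, so $v(f(a)-f(b))\geq 0=v(a-b)$ with no division needed. If $a-b$ is infinitesimal, then $a$ and $b$ have the same standard part $c\in[-1,1]_{\RR}$, and axiom~(\ref{it:ax_rescaling}) (together with the compatibility clause in Definition~\ref{def:real.analytic.structure}) lets you replace $f(x)$ by $f(x+c)$ and thereby assume $a,b\in\Kinf$; now whatever $\delta>0$ the Weierstrass division returns is good enough, because after a further real rescaling of the variables of $g$ one lands in a ring $B_{2,\alpha'}$ with $\alpha'>1$ while the (infinitesimal) arguments remain in $[-1,1]_K$, so Lemma~\ref{le:automatic_continuity} applies and your final two displayed computations go through verbatim. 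With that reduction inserted, the rest of your argument (the factorization, the uniqueness argument identifying the remainder, the norm bound, and the coarsening step) is correct and matches the paper's proof.
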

\begin{proof}
In the case where $a-b$ is not infinitesimal, we are done by Lemma \ref{le:automatic_continuity}.
When $a-b$ is infinitesimal, we may reduce to the case where both $a,b$ are inifinitesimal, by axiom (\ref{it:ax_rescaling}).
Weierstrass divide $f(x)$ by $(x-y)$ to obtain some $g(x,y) \in B_{2,\delta}^{\circ}$ such that
\[ f(x) - f(y) = (x-y) g(x,y) \]
and conclude by Lemma \ref{le:automatic_continuity} applied to $g$.
\end{proof}

\begin{lem} \label{lem:rv-I-unit}
Let $\lambda\in \Gamma_{\geq 0}$. If $I = I(c,r)$ is an interval and $h(x) \in \cO_I^{\times}$, then $\rv_{\lambda}(h(x))$ only depends on $\rv_{\lambda}(x - c)$.
\end{lem}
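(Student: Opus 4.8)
The statement claims that for a unit $h(x) \in \cO_I^{\times}$ on an interval $I = I(c,r)$, the value $\rv_\lambda(h(x))$ depends only on $\rv_\lambda(x-c)$. The plan is to reduce to the normalized situation $c = 0$, $r = 1$ by the substitution $x \mapsto (x-c)/r$, which is harmless since $\cO_I$ is defined precisely via this rescaling and $\rv_\lambda(x-c)$ and $\rv_\lambda((x-c)/r)$ determine one another (multiplication by the fixed unit $r$ is a bijection on $\RV_\lambda$). So we may assume $h(x) = f((x-c)/r)$ is literally the interpretation of some $f \in B_{1,\alpha}$ with $\alpha > 1$, evaluated on $[-1,1]_K$, and $h$ is a unit in $\cO_I$.

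\textbf{Key steps.} First I would record that, since $h$ is a unit in $\cO_I$, we have $\norm{h} = 1$ and $h_0(0) \neq 0$ after normalization; by Lemma~\ref{le:unit} (up to shrinking $\alpha$), $f$ is invertible in some $B_{1,\delta}$ with $\delta > 1$, and in particular $\norm{(\sigma h)(a)} = 1$ for all $a \in [-1,1]_K$ by Lemma~\ref{le:automatic_continuity} applied to $f$ and $f^{-1}$. Next, take $a, b \in [-1,1]_K$ with $\rv_\lambda(a) = \rv_\lambda(b)$; I want to show $\rv_\lambda(h(a)) = \rv_\lambda(h(b))$, i.e.\ $v(h(a) - h(b)) > v(h(a)) + \lambda = \lambda$ (using $v(h(a)) = 0$ since $\norm{h(a)} = 1$ forces $v(h(a)) = 0$ for the coarsening $v$). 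The hypothesis $\rv_\lambda(a) = \rv_\lambda(b)$ means $v(a - b) > \lambda$ (when $a, b \neq 0$; the case involving $0$ is immediate since then $a = b$). Now apply Lemma~\ref{lem:Lip-cont} to $f$: it gives $v(h(a) - h(b)) = v(f(a) - f(b)) \geq v(a - b) > \lambda$, which is exactly what is needed. Thus $h(a)$ and $h(b)$ lie in the same coset modulo $1 + B_{>\lambda}(0)$, so $\rv_\lambda(h(a)) = \rv_\lambda(h(b))$.

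\textbf{Main obstacle.} There is no serious obstacle here; the content is entirely in the already-established Lipschitz-continuity estimate (Lemma~\ref{lem:Lip-cont}) together with the fact that units have valuation $1$ (automatic continuity, Lemma~\ref{le:automatic_continuity}). The only points requiring a little care are: the bookkeeping of the rescaling by $r$ and ensuring $\rv_\lambda$ is genuinely invariant under it; handling the degenerate case where $x - c = 0$; and checking that strict inequality $v(a-b) > \lambda$ (rather than $\geq$) is what $\rv_\lambda(a) = \rv_\lambda(b)$ encodes and that it propagates through Lemma~\ref{lem:Lip-cont} to give $\rv_\lambda(h(a)) = \rv_\lambda(h(b))$. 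I expect the write-up to be short: normalize, quote Lemma~\ref{le:unit} for invertibility, quote Lemma~\ref{le:automatic_continuity} for $v(h(a)) = 0$, and quote Lemma~\ref{lem:Lip-cont} for the difference estimate.
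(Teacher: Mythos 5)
Your proof is correct and follows essentially the same route as the paper's: rescale to $f((x-c)/r)$, use the unit hypothesis together with Lemma~\ref{le:automatic_continuity} (applied to $f$ and $f^{-1}$) to get $v(h(a))=0$, and conclude with the Lipschitz estimate of Lemma~\ref{lem:Lip-cont}, noting that $\rv_\lambda(a)=\rv_\lambda(b)$ gives $v(a-b)>v(a)+\lambda\geq\lambda$. The only cosmetic difference is your detour through Lemma~\ref{le:unit}, which is unnecessary (and does not by itself yield a radius $>1$): invertibility is already part of the hypothesis $h\in\cO_I^{\times}$, so one can simply normalize $\norm{f}=\norm{f^{-1}}=1$ as the paper does.
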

\begin{proof}
	By assumption, $f(x) = h(r x + c)$ is a unit in some $B_{1,\alpha}$ with $\alpha > 1$.
	Multiplying by a constant, we may assume that $\norm{f} = \norm{f^{-1}} = 1$.
	We need to show that $v(f(a) - f(b)) > v(f(a)) + \lambda$ whenever $v(a  - b) > v(a) + \lambda$ for $a,b \in [-1,1]_K$.
	Since both $\norm{f(a)} \leq 1$ and $\norm{f^{-1}(a)} \leq 1$, it follows that $\norm{f(a)} = 1$.
	Hence, $v(f(a)) = 0$ for all $a \in [-1,1]_K$ and we conclude by Lemma~\ref{lem:Lip-cont}.
\end{proof}
\begin{lem} \label{lem:rv-strong-unit}
	If $D = D(c,\delta,\varepsilon)$ is an annulus and $U(x) \in \cO_D$ is a strong unit (i.e. $\norm{U(x) -1 } < 1$), then for every $\lambda \in \Gamma_{\geq 0}$ the value of $\rv_{\lambda}(U(x))$ only depends on $\rv_{\lambda}(x-c)$.
\end{lem}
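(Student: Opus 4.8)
The plan is to unwind the definition of $\cO_D$ and reduce everything to the Lipschitz-type estimate of Lemma~\ref{lem:Lip-cont}. Write $U(x) = g\!\left(\frac{\delta}{x-c}\right) + h\!\left(\frac{x-c}{\varepsilon}\right)$ with $g,h \in B_{1,\alpha}$ for some $\alpha > 1$ and $g(0) = 0$. Since the norm on $\cO_D$ is the maximum of the Gauss norms of the two summands, and $U-1$ has summands $g$ and $h-1$, the condition $\norm{U-1} < 1$ is equivalent to $\norm{g} < 1$ and $\norm{h-1} < 1$. Fix $\lambda \in \Gamma_{\geq 0}$ and $x_1,x_2 \in D$ with $\rv_\lambda(x_1-c) = \rv_\lambda(x_2-c)$; since $x_i \neq c$, unpacking this means $v(x_1 - x_2) > v(x_1 - c) + \lambda$. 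We must show $v(U(x_1) - U(x_2)) > v(U(x_1)) + \lambda$.

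Two preliminary observations streamline this. First, rescaling $g$ and $h-1$ by elements of $B_0$ of large Gauss norm supplied by axiom~(\ref{it:ax_norm}) and applying Lemma~\ref{le:automatic_continuity} (exactly as in its proof, but starting from a strict inequality) shows that $g$ and $h-1$ take only infinitesimal values on $[-1,1]_K$. Hence $U(x) - 1 \in \Kinf \subseteq \cM_K$ for every $x \in D$, so $U(x)$ is a $v$-unit with $v(U(x)) = 0$; it therefore suffices to prove $v(U(x_1) - U(x_2)) > \lambda$. Second, because $v$ is convex, the defining inequalities $\delta < \abs{x_i - c} < \varepsilon$ of the annulus give $v(\varepsilon) \leq v(x_i - c) \leq v(\delta)$; and from $v(x_1 - x_2) > v(x_1 - c) + \lambda \geq v(x_1 - c)$ together with the ultrametric inequality we get $v(x_1 - c) = v(x_2 - c)$.

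Now set $w_i = \frac{x_i - c}{\varepsilon}$ and $u_i = \frac{\delta}{x_i - c}$, both of which lie in $[-1,1]_K$. For the outer part, $h(w_1) - h(w_2) = (h-1)(w_1) - (h-1)(w_2)$, so Lemma~\ref{lem:Lip-cont} applied to $h-1$ gives
\[
v\bigl(h(w_1) - h(w_2)\bigr) \;\geq\; v(w_1 - w_2) \;=\; v(x_1 - x_2) - v(\varepsilon) \;>\; \bigl(v(x_1 - c) - v(\varepsilon)\bigr) + \lambda \;\geq\; \lambda ,
\]
the last step by $v(x_1 - c) \geq v(\varepsilon)$. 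For the inner part, $u_1 - u_2 = \frac{\delta(x_2 - x_1)}{(x_1 - c)(x_2 - c)}$, so using $v(x_1 - c) = v(x_2 - c)$ and Lemma~\ref{lem:Lip-cont} applied to $g$,
\[
v\bigl(g(u_1) - g(u_2)\bigr) \;\geq\; v(u_1 - u_2) \;=\; v(\delta) + v(x_1 - x_2) - 2\,v(x_1 - c) \;>\; \bigl(v(\delta) - v(x_1 - c)\bigr) + \lambda \;\geq\; \lambda ,
\]
the last step by $v(\delta) \geq v(x_1 - c)$. Adding the two parts yields $v(U(x_1) - U(x_2)) > \lambda$, which completes the proof. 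The work here is essentially bookkeeping on top of the earlier lemmas; the only points requiring genuine care are the strengthening of Lemma~\ref{le:automatic_continuity} that pins down $v(U(x)) = 0$ (so that the target reduces from ``$> v(U(x_1)) + \lambda$'' to ``$> \lambda$''), and keeping straight the chain $v(\varepsilon) \leq v(x_i - c) \leq v(\delta)$ forced by convexity of $v$, which is precisely what makes both estimates land strictly above $\lambda$ rather than above $\lambda$ minus a correction term.
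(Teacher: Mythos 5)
Your proof is in substance the paper's own argument: decompose $U$ into the inner part $g(\delta/(x-c))$ and the outer part, apply the Lipschitz estimate of Lemma~\ref{lem:Lip-cont} to each piece, use the convexity inequalities $v(\varepsilon)\leq v(x_i-c)\leq v(\delta)$ coming from the annulus condition, and finish with $v(U(x))=0$; the paper merely normalizes first to $D(0,\delta,1)$ by translation and rescaling, which shortens the same bookkeeping, and it asserts $v(U(a))=0$ without the extra detail you supply. One correction in that extra detail: the inclusion $\Kinf\subseteq\cM_K$ you invoke is backwards --- since $v$ is convex it is a \emph{coarsening} of the canonical valuation, so $\cM_K\subseteq\Kinf$ and not conversely (e.g.\ for a proper coarsening of the canonical valuation on $\RR((\ZZ^2_{\mathrm{lex}}))$). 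Your conclusion $v(U(x))=0$ is still correct, but for the right reason: $U(x)-1\in\Kinf\subseteq\Kfin\subseteq\cO_K$ gives $v(U(x))\geq 0$, while $\norm{U(x)}=1$ shows $U(x)\notin\Kinf\supseteq\cM_K$, hence $v(U(x))\leq 0$. With that one-line repair the argument is complete and matches the paper's route.
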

\begin{proof}
	Apply a translation and rescaling to reduce to the case where $D = D(0,\delta,1)$, with $\abs{\delta} < 1$.
	Since $U(x)$ is a strong unit, we can write $U(x) = 1 +  g(\delta/x) + h(x)$, with $g,h \in B_{1,\alpha}$ for some $\alpha > 1$ and $\norm{g},\norm{h} < 1$.
	Now let $a,b$ be distinct elements in $D$ such that $\rv_{\lambda}(a) = \rv_{\lambda}(b)$. 
	We apply Lemma \ref{lem:Lip-cont} to compute that
	\begin{align*}
		v(U(a) - U(b)) & = v( (h(a) - h(b)) + (g(\delta/a) - g(\delta/b) ) 	\\ 
		&	\geq \min\{ v(a - b),  v( b - a) - (v(ab) - v(\delta) ) \}   \\
		&	\geq \min\{ v(a-b), v(a - b) - v(a) \} \\
		& 	> \lambda, 
	\end{align*}
	where we used that $v(a) = v(b) \leq v(\delta)$ and $v(a- b) > v(a) + \lambda$.
	Because $v(U(a)) = 0$, the above computation shows that $\rv_{\lambda}(U(a))  = \rv_{\lambda}(U(b))$ 
\end{proof}

\subsection{Term analysis for real closed fields}\label{sec:term.real.closed} We now investigate the behavior of terms in the case where $K$ is real closed. This essentially follows work by Cluckers--Lipshitz--Robinson~\cite{CLRr}. For this section we assume $\cB$ to be strong and rich, and our field $K$ with $\cB$-analytic structure to be real closed. Recall that we have a convex valuation $v$ on $K$ with value group $\Gamma_K$.

\begin{prop} \label{prop:term_structure}
Let $A,K$ be two real closed fields with $\cB$-analytic structure, such that $A$ is an $\cL_{\cB}$-substructure of $K$, and let $\lambda\in \Gamma_{A, \geq 0}$.
Let $\tau(x)$ be an $\cL_\cB(A)$-term in one variable. 
Then there exist finitely many $a_0 = -\infty < a_1 < \ldots < a_n = \infty$ in $A$ and $\cL_{\cB[A]}$-terms $\tau_i(x)$, and finitely many $c_1,\dots,c_m \in A$, such that on each interval $(a_i, a_{i+1})_{K}$ we have that
\[
\tau(x) = R_i(x)\cdot \tau_i(x),
\]
for some rational function $R_i$ over $A$, and $\rv_{\lambda}(\tau_i(x))$ depends only on the tuple
\[
 (\rv_{\lambda}(x - c_i))_{i = 1}^m.
\]
\end{prop}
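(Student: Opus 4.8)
The plan is to reduce the analysis of an arbitrary $\cL_\cB(A)$-term in one variable to the situation already understood through the ring-of-analytic-functions machinery of Cluckers--Lipshitz--Robinson~\cite{CLRr}. First, using Proposition~\ref{prop:1.terms}, I would replace $\cB$ by the extended Weierstrass system $\cB[A]$, so that every $\cL_\cB(A)$-term becomes an $\cL_{\cB[A]}$-term; here we use that $\cB[A]$ is again a strong and rich real Weierstrass system. So it suffices to treat an $\cL_{\cB[A]}$-term $\tau(x)$ in one variable, and the ``rational function $R_i$'' will absorb the inverses and the constants appearing in $\tau$. The core case is that of a single function symbol $f \in B_{1,\alpha}$ applied to an argument, composed along the term tree.

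The main device is term-induction together with the standard ``decomposition into units'' coming from Weierstrass preparation over a real closed field. I would argue by induction on the structure of $\tau$. For a term of the form $g(\tau'(x))$ with $g \in B_{1,\alpha}$ (possibly after a translation/rescaling so the argument lands in $[-1,1]_K$), I would first apply the induction hypothesis to $\tau'$, obtaining a partition of $K$ by points of $A$ on each piece of which $\tau'(x) = R'(x)\tau'_i(x)$ with $\rv_\lambda(\tau'_i)$ controlled by the $\rv_\lambda(x-c_j)$. Then, on each piece, using Weierstrass preparation in $\cB[A]$ and the fact that $K$ is real closed (so that the relevant polynomials split and one can refine the $a_i$ to separate roots and sign-change points of the slices), one writes $g \circ \tau'$ as a rational function times a product of units in the rings $\cO_I(\cB[A])$ or strong units in $\cO_D(\cB[A])$. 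At that point Lemmas~\ref{lem:rv-I-unit} and~\ref{lem:rv-strong-unit} give exactly the required statement: $\rv_\lambda$ of each such unit depends only on $\rv_\lambda$ of its center-shifted argument, and since that argument is, inductively, an $\cL_{\cB[A]}$-term whose $\rv_\lambda$ is controlled by the finite tuple $(\rv_\lambda(x-c_j))_j$, one can enlarge the finite set $\{c_j\}$ and $\{a_i\}$ as needed and conclude. The atomic and the $\{+,\cdot,(\cdot)^{-1}\}$ steps are routine: sums and products of terms of the desired shape are again of that shape after refining the partition so that no cancellation of leading behavior occurs, and inversion is absorbed into $R_i$.

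Concretely, the heart of the argument is the reduction, on a suitable $A$-definable interval, of $f(x)$ for $f \in B_{1,\alpha}$, $\norm{f}=1$, to the form $(\text{polynomial over } A) \cdot (\text{unit in } \cO_I \text{ or strong unit in } \cO_D)$; this is precisely the content extracted from~\cite[\S3--\S4]{CLRr} in the real closed setting, where one uses that a regular-of-degree-$s$ power series, after Weierstrass preparation, becomes a distinguished polynomial times a unit, and over the real closed field $K$ that polynomial factors into linear pieces over $K^\rcl = K$, whose roots lie in $A^\rcl = A$ when the coefficients are in $A$ (hence after refining the partition by these roots, each linear factor has constant sign and can be absorbed, while the remaining unit has its $\rv_\lambda$ governed by Lemma~\ref{lem:rv-I-unit} or~\ref{lem:rv-strong-unit}).

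The main obstacle I anticipate is bookkeeping of the \emph{centers} $c_j$ and the \emph{endpoints} $a_i$ through the induction: one must ensure both lists remain finite and lie in $A$ (not just $K$), and that the centers arising at the outer composition step can be expressed through the inner ones. Over a real closed field this is manageable because all the auxiliary roots, sign changes, and branch points are cut out by $\cL_{\cB}(A)$-conditions in one variable, hence -- by o-minimality of the $\cL_{\cB[A]}$-structure on $K$ from~\cite[Thm.~3.4.3]{CLip} -- form a finite $A$-definable, and therefore finite-in-$A$, set. Making this precise, and checking that the rational functions $R_i$ and the finitely many $c_j$ can indeed be chosen uniformly across the finitely many pieces, is the part that requires the most care.
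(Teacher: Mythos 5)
Your overall toolkit is the right one (pass to $\cB[A]$, use the Cluckers--Lipshitz--Robinson decomposition into rational functions times units, then Lemmas~\ref{lem:rv-I-unit} and~\ref{lem:rv-strong-unit}), but the induction scheme you propose has a genuine gap at the composition step. Your inductive hypothesis for the inner term $\tau'$ is exactly the conclusion of the proposition: a piecewise factorization $\tau'=R'\cdot\tau'_i$ together with control of $\rv_\lambda(\tau'_i)$ by $(\rv_\lambda(x-c_j))_j$. That is far too weak to treat $g(\tau'(x))$: knowing the $\rv_\lambda$-behaviour of $\tau'_i$ gives no way to Weierstrass-prepare $g(R'(x)\tau'_i(x))$ in $\cB[A]$, nor even to see that this composite is, piecewise, given by an element of some $\cO_I(\cB[A])$ or $\cO_D(\cB[A])$ --- substitution into $g$ requires knowing the argument as an actual analytic function on the piece (and where its values land after rescaling), not just its leading-term data. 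The statement that must be carried through the term induction is the much stronger one that, away from finitely many points, $\tau$ itself belongs to $\cO_{X_i}(\cB[A])$ for a finite cover by intervals and annuli with centers and radii in $A$; this is precisely Proposition~\ref{prop:terms_belong_to_an_ring} (extending \cite[Prop.~3.13]{CLRr}), and only after it is in place does one factor as a rational function times a unit or strong unit (as in \cite[Cor.~3.5, Lem.~3.7]{CLRr}) and invoke the $\rv_\lambda$-lemmas. The paper's proof proceeds this way, after first using Theorem~\ref{thm:embedding} and elementarity $A\preccurlyeq K\preccurlyeq \RR((G))$ (from \cite[Thm.~3.4.3]{CLip}) to reduce to $A=K=\RR((G))$, where the CLRr results apply verbatim; your sketch never makes this reduction explicit, yet it is what makes the citation of \cite{CLRr} legitimate.

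The parameter bookkeeping you flag as the main obstacle is also not settled by o-minimality alone. Finiteness plus $A$-definability puts the points $a_i,c_j$ in $A$ only because $A\preccurlyeq K$ in $\cL_{\cB[A]}$; more importantly, the analytic functions occurring on each piece must lie in $\cO_{X_i}(\cB[A])$, and by Definition~\ref{def:expanded-Weierstrass} membership in $\cB[A]$ allows only \emph{infinitesimal} parameters from $A$. Achieving this requires the additional step in the proof of Proposition~\ref{prop:terms_belong_to_an_ring}: expressing the ring-membership statements by $\cL_\cB$-formulas with free parameter variables, transferring them to $A$ by quantifier elimination, and then cutting the intervals and annuli into finitely many smaller ones via axiom (1)(c) and compactness of $[-1,1]_\RR$ so that the remaining non-infinitesimal parameters can be absorbed into real constants. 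Without this, the $\tau_i$ you produce need not be $\cL_{\cB[A]}$-terms at all. Two smaller points: on an annulus the correct shape is $\tau(x)=R(x)h(x)U(x)$ with $h$ a unit of $\cO_{I(c,\varepsilon)}$ and $U$ a strong unit, so both Lemma~\ref{lem:rv-I-unit} and Lemma~\ref{lem:rv-strong-unit} are needed simultaneously; and the reduction to $x\in[-1,1]_K$ via $x\mapsto 1/x$ must be performed (and tracked in the rational factors) before Proposition~\ref{prop:terms_belong_to_an_ring} can be applied.
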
 

The below proposition is an extension of \cite[Prop.~3.13]{CLRr}. 
%Essentially, it reduces Propostion \ref{prop:term_structure} to the case where $\tau(x) = f(x,a)$ for a function symbol $f$ and parameters $a$.
The main difference is that we need control on the parameters from $A$. 
This requires functions from $\cB[A]$ to be used in the definition of the $\cO_{X_i}$ rather than only functions from $\cB$.
\begin{prop} \label{prop:terms_belong_to_an_ring}
Let $\cB$ be a strong real Weierstrass system over a divisible group $G$ and let $A$ be a real closed field which is an $\cL_\cB$-substructure of $\RR((G))$.
Let $\tau(x)$ be an $\cL_\cB(A)$-term in one variable. 

There exists a covering of $[-1,1]_{\RR((G))}$ by finitely many intervals and annuli $X_i$ with centers and radii in $A$ such that except for finitely many values of $x\in [-1,1]_{\RR((G))}$, we have for each $i$ that $\tau(x) \in \cO_{X_i}(\cB[A])$.
\end{prop}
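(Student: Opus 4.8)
The plan is to prove Proposition~\ref{prop:terms_belong_to_an_ring} by induction on the structure of the term $\tau(x)$, following the strategy of~\cite[Prop.~3.13]{CLRr} but carrying the parameters from $A$ along in the Weierstrass system. By Proposition~\ref{prop:1.terms} (applied over $\RR((G))$, using that $A$ is an $\cL_\cB$-substructure of $\RR((G))$), every $\cL_\cB(A)$-term is equivalent on $\RR((G))$ to an $\cL_{\cB[A]}$-term, and $\cB[A]$ is again a strong real Weierstrass system by Lemma~\ref{lem:extended_Weierstrass}; so it suffices to prove the statement for an $\cL_{\cB[A]}$-term $\tau(x)$ with the rings $\cO_{X_i}$ formed using $\cB[A]$. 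I would then induct on the term. For a variable or constant the claim is trivial (one interval $[-1,1]$). For sums and products, one intersects the two finite coverings obtained for the subterms and throws away the finitely many bad points; here one needs that $\cO_{X_i}(\cB[A])$ is a ring (which uses strongness, as recalled in the remark after Definition~\ref{def:an_ring}), and that the value group is divisible so that intervals and annuli can be subdivided with centers and radii in $A$. The genuinely substantive cases are function application $\tau \mapsto f(\tau)$ for $f \in B_{1,\alpha}$, and inversion $\tau \mapsto \tau^{-1}$.

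For the function-application step, suppose by induction that on each piece $X_i$ of a finite covering we have $\tau(x) \in \cO_{X_i}(\cB[A])$ away from finitely many points, and we must analyse $f(\tau(x))$. The idea is to further refine the covering so that on each refined piece $\tau$ maps $X_i$ into an interval $I(c,r)_{\RR((G))}$ with $c,r \in A$, on which $f$ (rescaled by the affine change $z \mapsto (z-c)/r$ and, if necessary, after Weierstrass preparation to isolate the finitely many zeros of the top slice) is either a unit of $\cO_I(\cB[A])$ or a monomial times such a unit. Composing: a unit in $\cO_I$ precomposed with an element of $\cO_{X_i}$ mapping into $I$ again lies in $\cO_{X_i}$, possibly after shrinking radii and discarding finitely many $x$ where the top-slice zeros of $f$ are hit, which accounts for the ``finitely many values of $x$'' escape clause. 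The refinement that $\tau$ maps each small piece of the domain into a fixed interval $I(c,r)$ with parameters in $A$ is where one must be careful: since $\tau \in \cO_{X_i}$ and $A$ is real closed, the image of $X_i$ under $\tau$ is controlled (it is ``close to'' the image of the top slice, which is a real analytic function on a real interval), and the endpoints/centers of the covering can be chosen in $A$ by o-minimality of the $\cL_\cB$-structure on $A$ (from~\cite[Thm.~3.4.3]{CLip}) together with the fact that $A$ is real closed.

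For the inversion step, one uses that $\tau \in \cO_{X_i}(\cB[A])$ and, after refining, that $\tau$ is bounded away from $0$ on each piece — i.e. $\tau$ is a \emph{unit} of $\cO_{X_i}(\cB[A])$ — except near the finitely many zeros of $\tau$. On an interval $I$, a unit of $\cO_I$ with $\|\cdot\|$-normalisation is of the form (monomial in $(x-c)/r$) times a strong unit by Weierstrass preparation, and on an annulus $D$ the analogous Weierstrass-type factorisation from~\cite{CLRr} applies; in either case the inverse again lies in the corresponding ring. The zeros of $\tau$ themselves lie in $A$ (real closedness) and contribute only to the finite exceptional set and, possibly, to new annuli in the covering centered at those zeros. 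The main obstacle, as in~\cite{CLRr}, is precisely the bookkeeping in the function-application case: controlling the image interval $I(c,r)$ with center and radius in $A$ uniformly, and managing the interaction between the (finitely many) real zeros of the top slice of $f$ and the (finitely many) exceptional points in the domain. The new ingredient compared to~\cite[Prop.~3.13]{CLRr} is that all of this must be done with $\cB[A]$ in place of $\cB$, but since $\cB[A]$ is a bona fide strong real Weierstrass system (Lemma~\ref{lem:extended_Weierstrass}) with the same formal properties, the arguments of~\cite{CLRr} transfer essentially verbatim; the one point demanding attention is that $\delta$ in the Weierstrass division for $\cB[A]$ does not depend on the number of variables, which is exactly the amended axiom~(\ref{it:ax_Wdiv}) and is what makes the induction terminate uniformly.
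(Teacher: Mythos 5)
Your route is genuinely different from the paper's: the paper does \emph{not} re-run the term induction of~\cite{CLRr} in $\cB[A]$; it quotes \cite[Prop.~3.13]{CLRr} verbatim for the case $A = \RR((G))$ and then transfers the conclusion down to $A$ by a model-theoretic argument. That transfer is the actual content of the proposition, and it is exactly the point your sketch glosses over. Concretely, the paper observes that ``on $I(c,r)$ the term $\tau$ agrees, off finitely many points, with $f((x-c)/r,a)$ for this fixed $f\in B_{1+n,\alpha}$'' (and the analogous statement for annuli) is expressible by an $\cL_\cB$-formula in the free variables $c,r,a$; then, since $A$ is a real closed $\cL_\cB$-substructure of $\RR((G))$ and the theory of real closed fields with $\cB$-analytic structure has quantifier elimination by \cite[Thm.~3.4.3]{CLip}, one has $A\preccurlyeq\RR((G))$ and can choose $c,r$ and the parameter tuples $a,b$ \emph{in $A$}; finally, because $\cO_{X_i}(\cB[A])$ only allows \emph{infinitesimal} parameters from $A$ (Definition~\ref{def:expanded-Weierstrass}), a further step is needed: axiom~(\ref{it:ax_rescaling}) plus compactness of $[-1,1]_\RR$ is used to cut each piece so that the $A$-parameters can be recentred at real points and thereby made infinitesimal, after which one transfers back up to $\RR((G))$.

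The gap in your proposal is precisely this parameter control. Your induction, carried out over $\RR((G))$, produces centers, radii and representing data a priori in $\RR((G))$; the single sentence invoking ``o-minimality of the $\cL_\cB$-structure on $A$ together with real closedness'' does not by itself put the whole representation (not just finitely many endpoints, but the functions together with the tuples plugged into them) over $A$, nor does it address the requirement that those tuples be infinitesimal -- and non-infinitesimal constants from $A$ do arise in the induction (values of subterms at centers, ratios of coefficients), so the recentring/rescaling step cannot be skipped. What is needed is the first-order expressibility of the conclusion plus elementarity $A\preccurlyeq\RR((G))$ (for $\cL_{\cB[A]}$, using Lemma~\ref{lem:extended_Weierstrass} so that \cite[Thm.~3.4.3]{CLip} applies), i.e.\ essentially the paper's argument; if instead you insist on redoing the CLRr induction, you must verify at each step (in particular the composition and inversion cases, which are the hard part of \cite{CLRr} and do not follow from the one-line ``a unit of $\cO_I$ precomposed with an element of $\cO_{X_i}$ lies in $\cO_{X_i}$'') that every choice can be made $\cL_{\cB[A]}$-definably with infinitesimal $A$-parameters. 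Two smaller points: Proposition~\ref{prop:1.terms} formally assumes $\cB$ rich, which is not a hypothesis here; the translation between $\cL_\cB(A)$-terms and $\cL_{\cB[A]}$-terms over $\RR((G))$ that you need follows directly from Definition~\ref{def:expanded-Weierstrass} (no embedding theorem is required since $A\subset\RR((G))$ is given). Also, intersecting the coverings for subterms does not immediately return sets of the form $I(c,r)$ or $D(c,\delta,\varepsilon)$ with data in $A$, and restriction of $\cO_X$-elements to smaller pieces itself requires the composition lemmas, so even the ``easy'' sum/product step is not purely formal.
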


In other words, in the above proposition we will have that $\tau(x)$ agrees with an element $\tau_i(x) \in \cO_{X_i}(\cB[A])$ on each $X_i$, except at finitely many points of $X_i$.

\begin{proof}
%Clearly, it suffices to prove this in the case where $A = A^{\rcl}$.
In \cite[Prop.~3.13]{CLRr} it is shown that this proposition holds when $A = \RR((G))$. 
Hence, there exist $X_i$, $\tau_i$ as in the statement but with $X_i$ defined over $\RR((G))$ and $\tau_i \in \cO_{X_i}(\RR((G)))$.
Now this proposition essentially follows from the fact that $A \preccurlyeq \RR((G))$ as $\cL_{\cB[A]}$-structures. We give some details.

%Then, since (by quantifier elimination) $A \preccurlyeq \RR((G_A))$ as $\cL_{\cB}$-structures, it follows that there similarly exist annuli and intervals $X_i'$, defined over $A$, and $\tau_i'(x) \in \cO_{X_i}(A)$ such that the $X_i'$ cover $[-1,1]_A$ and $\tau(x) = \tau_i'(x)$ for all but finitely many $x \in X_i'$.
%Indeed, we can express the existence of such a decomposition in intervals and annuli in a first order way. 
First observe that there exists an $\cL_\cB$-formula in free variables $c,r,a$ expressing that on the interval $I = I(c,r)$ it holds that (except at certain finite number of points)
\begin{equation} \label{eq:term_OI_generalized}
	\tau(x) = f((x-c)/r,a)
\end{equation}
for a certain $f \in B_{1+n,\alpha}$, and some $n \in \NN$, $\alpha > 1$.
Similarly, there is an $\cL_\cB$-formula in free variables $c,\delta,\varepsilon,a,b$ expressing that on the annulus $D = D(c,\delta,\varepsilon)$ it holds that (expect at certain finite number of points)
\begin{equation} \label{eq:term_OD_generalized}
	\tau(x) = g(\delta/(x-c),a) + h((x - c)/\varepsilon,b) 
\end{equation}
for specific $g,h \in B_{1+n,\alpha}$ (some $n \in \NN$, $\alpha > 1$). 

Now recall that we have quantifier elimination in $\cL_\cB$ for the theory of real closed fields with $\cB$-analytic structure, by~\cite[Thm.~3.4.3]{CLip}. %(note that the proof of only uses properties of the Weierstrass system and the fact that the underlying field is real closed, hence quantifier elimination holds more generally for the theory of real closed fields with $\cB$-analytic structure).
Hence, $A \preccurlyeq \RR((G))$ and there exists a covering of $[-1,1]_A$ by finitely many intervals and annuli with centers and radii in $A$, such that on each annulus $\tau(x)$ is of the form (\ref{eq:term_OD_generalized}) and on each interval $\tau(x)$ is of the form (\ref{eq:term_OI_generalized}) for $a,b \in A$ (except at a finite number of points).
If all $a,b$ appearing in (\ref{eq:term_OD_generalized}) and (\ref{eq:term_OI_generalized}) can be taken infinitesimal, then the right-hand sides belong to $\cO_I(\cB[A])$ and $\cO_{D}(\cB[A])$ respectively, and we are done.
Suppose this is not the case for some annulus $D$.
Then apply axiom (\ref{it:ax_rescaling}) and compactness of $[-1,1]_{\RR}$ to cut up $D$ into finitely many annuli $D_1,\dots,D_q$ such that $\tau_{|D}$ belongs to $\cO_{D_j}(\cB[A])$ for every $j = 1,\dots,q$, except at finitely many points. The case of an interval is similar.

Let $X_1',\dots,X'_{n'}$ be the resulting covering of $[-1,1]_A$ by intervals and annuli and $\tau_i' \in \cO_{X_i'}(\cB[A])$ the corresponding terms.
Since $A \preccurlyeq \RR((G))$ for $\cL_{\cB[A]}$, it follows that the formulas defining $X_i'$ and $\tau_i'$ then work over $\RR((G))$ as well.
This concludes the proof.
\end{proof}
\begin{proof}[Proof of Proposition \ref{prop:term_structure}]
%It suffices to prove the proposition in the case that $A = A^\rcl$ and $K = K^\rcl$.
%Then, by quantifier elimination and Theorem \ref{thm:embedding}, we have that $A \preccurlyeq K$ annd $A \preccurlyeq \RR((G_A))$ (where $G_A$ is the value group of $A$) for the language $\cL_{\cB}$.
%Note that we may express in a first order way in $\cL_{\cB}(A)$ that there exists $a_1,\dots,a_n$ such that on each interval $(a_i,a_{i+1})$ the stated decomposition holds, for certain $\cL_{\cB}(\RR((G_A)))$-terms $R_i'$, $\tau_i'$ with the the prescribed properties.  
%Hence, since $A \preccurlyeq \RR((G_A))$ it suffices to prove the statement for the $\cL_{\cB}(\RR((G_A)))$-structure on $\RR((G_A))$.
%Indeed, since $A \preccurlyeq \RR((G_A))$ there will then also exist $a_i',c_i' \in A$ as well as  $\cL_{\cB}(A)$-terms $R_i'$, $\tau_i'$ which are which satisfy the statement of the proposition.
%Now using that $A \preccurlyeq K$, the same $a_i'$, $c_i'$, and $R_i'.\tau_i'$ work over $K$.
%So without loss of generality, we may work in the setting where $K = \RR((G_A))$.
Theorem \ref{thm:embedding} produces an $\cL_\cB$-embedding $K \hookrightarrow \RR((G))$, where the latter is equipped with the natural $\cB$-analytic structure.
Similarly to Proposition \ref{prop:terms_belong_to_an_ring}, we may use that $A \preccurlyeq K \preccurlyeq \RR((G))$ as $\cL_{\cB[A]}$-structures to reduce to the case where $A = K = \RR((G))$ and $\cB = \cB[A] = \cB[K]$, and where $\cB$ is still strong, but not necessarily rich.
Furthermore, performing the change of variables $x \mapsto 1/x$, it suffices to consider only $x \in [-1,1]_K$, so that Proposition \ref{prop:terms_belong_to_an_ring} applies.

This yields a covering of $[-1,1]_K$ by finitely many intervals and annuli $X_i$ such that on each of them $\tau(x)$ agrees with an element of $\cO_{X_i}(\cB[A])$, except at finitely many points.
Fix any interval or annulus $X$ of this cover of $[-1,1]_{K}$.
If $X$ is an interval, then similarly to \cite[Cor.~3.5]{CLRr} we may find finitely many intervals $\{I_i\}_{i = 1}^n$ covering $X$, polynomials $P_i(x) \in K[x]$ and units $U_i(x) \in \cO_{I_i}^{\times}$ such that $\tau_{|I_i}(x) = P_i(x) U_i(x)$ except at finitely many points, for all $i = 1,\dots,n$. Then use Lemma~\ref{lem:rv-I-unit} to control $\rv_{\lambda}(U_i(x))$.

If $X = D(c,\delta,\varepsilon)$ is an annulus, then apply the same arguments as in \cite[Lemma~3.7]{CLRr}. 
%(this guarantees that for example $\frac{x -c}{\varepsilon} \tau(x) $ is an element of $\cO_X$, at all but finitely many points).
We may split of any number of intervals, since we already know how to deal with these.
So let $D = D(c,\delta,\varepsilon)$ be one of the finitely many resulting annuli.  
The arguments of \cite[Lemma 3.7]{CLRr} show that
\[ \tau_{|D}(x) = R(x) h(x) U(x), \]
where $R(x)$ is a rational function $h(x)$ is a unit in $\cO_I(c,\varepsilon)$ (and not just $\cO_{D}$) and $U(x)$ is a strong unit.
Note that the above equality between \emph{terms} then also holds on the intervals $(c-\varepsilon,c-\delta)$ and $(c+\delta,c+\varepsilon)$ seperately.
Now use Lemmas~\ref{lem:rv-I-unit} and~\ref{lem:rv-strong-unit} to find finitely many $c_1,\dots,c_n \in K$ such that each occurring $\rv_{\lambda}(h(x))$ and $\rv_{\lambda}(U(x))$ is completely determined by the finite tuple $(\rv_{\lambda}(x - c_i))_{ i = 1}^n$.
Note that the finitely many points where $\tau(x)$ does not belong to any $\cO_X$ are harmless, as we can simply add these points to the set of $a_i$'s.
\end{proof}

\subsection{Pulling back from the real closure}\label{sec:pulling.back}

In Proposition~\ref{prop:term_structure} we have shown how to prepare an $\cL_{\cB}(A)$-term in one variable when the field is real closed. We now show how to pull back our preparing set from the real closure to the ground field. More concretely, the goal is to prove the following.

\begin{lem}\label{lem:pull_back_from_rcl}
Let $A\subset K$ be a subfield, and let $D\subset A^\rcl$ be a finite set. Then there exists finite set $C\subset K$ which is $\cL_{\mathrm{val}}(A)$-definable such that for every $\lambda\in \Gamma_{K,\geq 0}$, every ball $B\subset K$ $\lambda$-next to $C$ is contained in a ball $\lambda$-next to $D$.
\end{lem}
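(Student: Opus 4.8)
The plan is to reduce the statement to a finite amount of data about how the elements of $D\subset A^\rcl$ sit relative to $K$, and then to encode that data by an $\cL_{\val}(A)$-definable finite set $C\subset K$. The starting observation is that each $d\in D$ is algebraic over $A$, hence over $K$. Write $p_d(y)\in K[y]$ for the minimal polynomial of $d$ over $K$ (its coefficients actually lie in $A$, but we only need that they lie in $K$; for the $\cL_{\val}(A)$-definability we will instead use the minimal polynomial over $A$ together with the finitely many conjugates). Since $K$ is henselian (Lemma~\ref{lem:cB.implies.henselian}), the canonical valuation extends uniquely to $K^\rcl\supset A^\rcl$, and this is the valuation we work with throughout; note that a coarsening by $v$ does not change which balls are "$\lambda$-next" to a point once $\lambda$ is fixed in the appropriate value group, so it suffices to argue for the canonical valuation and then transport.

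Here is the key step. For a finite set $D\subset K^\rcl$ and a parameter $\lambda$, whether a ball $B\subset K$ is contained in a ball $\lambda$-next to $D$ is governed, for each $d\in D$ separately, by the function $x\mapsto \rv_\lambda(x-d)$ restricted to $x\in K$. I claim that for each fixed $d$ there is a finite $\cL_{\val}(A)$-definable set $C_d\subset K$ such that $\rv_\lambda(x-d)$ (computed in $K^\rcl$) is constant on every ball $\lambda$-next to $C_d$, for all $\lambda\in\Gamma_{K,\ge0}$ simultaneously. To produce $C_d$, take the conjugates $d=d_1,\dots,d_r$ of $d$ over $K$ and consider the "distances" $\norm{d_i-d_j}$ and, more importantly, for a variable $x\in K$ the quantity $\min_i\norm{x-d_i}$, which is the distance from $x$ to the $\mathrm{Gal}$-orbit. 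Because $K$ is henselian with real closed residue field, the closest conjugate to a given $x\in K$ is controlled by finitely many "critical" values: roughly, $C_d$ should consist of the images in $K$ of the pairwise averages / the finitely many $K$-rational points that realise the possible distances $\norm{x-d_i}$. Concretely, one can take $C_d$ to be the (finite, $\cL_{\val}(A)$-definable because it is the set of $K$-points cut out by a condition on the minimal polynomial of $d$ over $A$) set of $a\in K$ such that $a$ is a "best $K$-approximation" to some conjugate of $d$, i.e.\ $\norm{a-d_i}$ is maximal among $K$-points for some $i$; equivalently, one uses that the set $\{\norm{x-d}\;:\;x\in K\}$ has a maximum $\mu_d$ and picks $C_d$ to be the $K$-points achieving a value cofinal with the finitely many "jump" values. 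Then for $x,x'\in K$ in the same ball $\lambda$-next to $C_d$ one shows $\norm{x-x'}>\lambda+$ (the relevant value), hence $\rv_\lambda(x-d)=\rv_\lambda(x'-d)$ by the ultrametric inequality applied in $K^\rcl$.

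Finally set $C=\bigcup_{d\in D}C_d$, which is finite and $\cL_{\val}(A)$-definable. If $B$ is a ball $\lambda$-next to $C$, then $B$ lies inside a ball $\lambda$-next to $C_d$ for each $d$, so by the claim $\rv_\lambda(x-d)$ is constant on $B$ for every $d\in D$; that constancy for all $d\in D$ is exactly the statement that $B$ is contained in a single ball $\lambda$-next to $D$.

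\emph{Main obstacle.} The delicate point is the construction of $C_d$ and the verification that a \emph{single} finite set works uniformly in $\lambda$: one must show that the "distance profile" of a $K$-point $x$ to the finite Galois orbit of $d$ is determined, up to the precision needed for $\rv_\lambda$, by which ball $\lambda$-next to $C_d$ contains $x$. This is where henselianity and the real closed residue field are used (to control conjugates: e.g.\ in the situation of Lemma~\ref{lem:alg-extension}'s proof, all conjugates of an infinitesimal are infinitesimal), and where one has to be careful that $C_d$ is genuinely $\cL_{\val}(A)$-definable — it should be defined purely from the minimal polynomial of $d$ over $A$, with no reference to $K^\rcl$. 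I expect the cleanest route is: pass to the splitting field, use that $\norm{x-d_i}$ for $x\in K$ takes only finitely many "generic" values plus the maximum, and let $C_d$ be the finite $\cL_{\val}(A)$-definable set of $K$-points realising those values together with $0$; then the ultrametric computation closes the argument.
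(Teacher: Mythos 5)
Your reduction is sound: it suffices, for each $d\in D$, to produce a finite $\cL_{\val}(A)$-definable $C_d\subset K$ on whose $\lambda$-next balls $x\mapsto\rv_\lambda(x-d)$ is constant, and then take $C=\bigcup_d C_d$. But the heart of the lemma is precisely the construction of $C_d$ together with the ultrametric verification, and that is exactly what your sketch leaves open (you flag it yourself as the ``main obstacle''). As written, the proposed construction does not work: the set of best $K$-approximations to a conjugate $d_i$ is not finite --- if the distance $\mu_d=\sup\{\norm{x-d_i}:x\in K\}$ is attained, the points attaining it form (the trace on $K$ of) a whole ball; ``$K$-points achieving a value cofinal with the finitely many jump values'' likewise does not single out a finite set, let alone a canonical one; the existence of the maximum $\mu_d$ is asserted without proof (it does hold here, by defectlessness in equicharacteristic zero, but it needs an argument); and the $\cL_{\val}(A)$-definability of whatever finite set you extract is never established, since your defining conditions refer to the conjugates $d_i\in K^\rcl$ and distances to them rather than to data expressible over $A$ inside $K$. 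Also, your opening remark that one may pass between the canonical valuation and the convex valuation $v$ ``without changing which balls are $\lambda$-next'' is not correct as stated (balls of a coarsening are unions of balls of the finer valuation); fortunately no such transfer is needed, since the argument should be run directly for the given convex valuation.

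The paper fills this gap with a different, concrete device: an induction on the degree of the minimal polynomial $p\in A[x]$ of $d$. One prepares $\rv(p(x))$ on balls $1$-next to a finite $\cL_{\val}(A)$-definable set (Flenner), includes by induction a set preparing the roots of $p'$, and then shows that a valuative ball containing two roots of $p$ would, by Rolle's theorem applied in the real closed field $K^\rcl$ (using convexity of balls for a convex valuation), contain a root of $p'$; multiplicativity of $\rv$ over the factorization of $p$ then yields a contradiction if $\rv(x-\alpha)\neq\rv(y-\alpha)$ for some root $\alpha$ while $x,y$ lie in the same ball $1$-next to the prepared set. This handles $\lambda=1$, and a short radius comparison upgrades the statement to all $\lambda\in\Gamma_{K,\geq 0}$ simultaneously. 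Your proposal contains no substitute for this derivative/Rolle mechanism (or any other mechanism) that actually produces the finite definable set and the inequality $\norm{x-x'}$ versus $\lambda+\norm{x-d}$, so it does not yet constitute a proof.
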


\begin{proof}
We prove the following claim via induction:
\begin{enumerate}
\item[] Let $p\in A[x]$ be a non-constant polynomial with roots $\alpha_1, \ldots, \alpha_n$ in $A^\rcl$. Then there exists a finite $\cL_\val(A)$-definable set $C\subset K$ such that if $x,y\in K$ are in the same ball $1$-next to $C$, then $\rv(x-\alpha_i) = \rv(y-\alpha_i)$ for every $i$.
\end{enumerate}
%To obtain the lemma from this claim, simply apply it to all minimal polynomials of the elements of $D$. 

We use induction on $n = \deg (p)$. If $n=1$ it is clear, since then we can simply take $C = \{\alpha_1\}$. So we assume that $n > 1$ and that the statement is proven for smaller $n$. By induction, let $C$ be a finite set for the claim applied to $p'$, and also such that $\rv(p(x))$ is constant on balls $1$-next to $C$, which is possible by~\cite[Prop.~3.6]{Flen}. This $C$ is $\cL_\val(A)$-definable, and we claim that it suffices. So let $x,y$ be in the same ball $1$-next to $C$, and assume that the claim is false. Then there exists some root $\alpha$ of $p$ such that $\rv(x-\alpha)\neq \rv(x-\alpha)$. Let $B\subset K^\rcl$ be the closed valuative ball around $x$ of radius $v(x-y)$, then $\alpha$ is an element of $B$. If $B$ would contain another root $\alpha'\in K^\rcl$ of $p$, then by Rolle's theorem it would also contain a root of $p'$, since $K^\rcl$ is real closed and $B$ is convex. But this contradicts the construction of $C$, and so the only root of $p$ in $B$ is $\alpha$. Hence if $\alpha'$ is another root of $p$, then $\rv(x-\alpha') = \rv(y-\alpha')$. But if we now compute
\[
\rv(p(x)) = \rv(x-\alpha) \prod_{\alpha'\neq \alpha} \rv(x-\alpha') \neq \rv(y-\alpha') \prod_{\alpha'\neq \alpha} \rv(y-\alpha) = \rv(p(y)),
\]
which contradicts the construction of $C$. This concludes the proof of the claim.

We now turn to proving the lemma, for which we apply the claim to all minimal polynomials of elements of $D$ to obtain a finite $\cL_{\val}(A)$-definable set $C$ in $K$. Now if $B\subset K$ is a ball $1$-next to $C$ then by construction it is contained in a ball $B'$ which is $1$-next to $D$. Let $B_\lambda\subset B$ be a ball which is $\lambda$-next to $C$. Then the radius of $B_\lambda$ is $\lambda + \radop (B)$, and so $B_\lambda$ is contained in a ball $B_\lambda'\subset B'$ in $K^\rcl$ of radius $\lambda + \radop (B')$. But then $B'_\lambda$ is $\lambda$-next to $D$, as desired.
\end{proof}

\begin{remark}
This lemma in fact also holds when $D$ is a subset of $A^\alg$, the algebraic closure of $A$, with the same proof. In that case one should replace the use of Rolle's theorem by the fact that if $p\in K^{\alg}[x]$ has two roots in the valuation ring, then its derivative also has a root in the valuation ring.
\end{remark}

We can finally prove that we may prepare terms in a good way.

\begin{prop}\label{prop:term_analysis}
Let $K$ be an almost real closed field with real analytic $\cB$-structure, where $\cB$ is strong and rich. Let $A\subset K$ and let $\tau(x)$ be an $\cL_{\cB}(A)$-term in one variable. Then there exists a finite $\cL_{\val, \cB}(A)$-definable set $C\subset K$ such that for each $\lambda\in \Gamma_{\geq 0}$, the value of $\rv_{\lambda}(\tau(x))$ depends only on the tuple $(\rv_{\lambda}(x-c))_{c\in C}$. 
\end{prop}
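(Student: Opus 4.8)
The plan is to reduce the statement over the almost real closed field $K$ to the already-established statement over its real closure $K^\rcl$, and then pull the preparing set back down to $K$. First I would pass to the real closure: by Lemma~\ref{lem:alg-extension}, the $\cB$-analytic structure on $K$ extends uniquely to $K^\rcl$, and the convex valuation $v$ (as well as the canonical valuation) extends to $K^\rcl$. The term $\tau(x)$ is an $\cL_\cB(A)$-term with $A\subset K\subset K^\rcl$, so Proposition~\ref{prop:term_structure}, applied with the pair of real closed fields $A^\rcl \preccurlyeq K^\rcl$ (or with $A$ replaced by $A^\rcl$), produces a finite set of breakpoints $a_0=-\infty<a_1<\dots<a_n=\infty$ in $A^\rcl$, rational functions $R_i$ over $A^\rcl$, $\cL_{\cB[A^\rcl]}$-terms $\tau_i$, and finitely many $c_1,\dots,c_m\in A^\rcl$ such that on each interval $(a_i,a_{i+1})_{K^\rcl}$ one has $\tau(x)=R_i(x)\tau_i(x)$, with $\rv_\lambda(\tau_i(x))$ depending only on $(\rv_\lambda(x-c_j))_{j=1}^m$.

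Next I would assemble, inside $K^\rcl$, a single finite set $D\subset K^\rcl$ that controls everything: throw in the breakpoints $a_i$, the centres $c_j$, the zeros and poles of each rational function $R_i$ (so that $\rv_\lambda(R_i(x))$ is controlled by $(\rv_\lambda(x-d))_{d\in D}$), and, if needed, enough extra points coming from Flenner-type preparation of the $R_i$'s. By construction, for every $\lambda\in\Gamma_{\geq 0}$ the value $\rv_\lambda(\tau(x))$ is determined by the tuple $(\rv_\lambda(x-d))_{d\in D}$ for $x\in K^\rcl$ (and in particular for $x\in K$): fixing which ball $\lambda$-next to $D$ the point $x$ lies in pins down which interval $(a_i,a_{i+1})$ contains $x$, hence pins down $i$, hence pins down $\rv_\lambda(R_i(x))$ and $\rv_\lambda(\tau_i(x))$, hence their product. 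Here I must be a little careful that the finite exceptional sets where $\tau$ fails to agree with $R_i\tau_i$ are simply absorbed into $D$, as in the last lines of the proof of Proposition~\ref{prop:term_structure}.

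Finally I would descend to $K$. The set $D$ lives in $A^\rcl\subset A^\alg$, not in $K$, so I invoke Lemma~\ref{lem:pull_back_from_rcl}: it yields a finite $\cL_\val(A)$-definable set $C\subset K$ such that for every $\lambda\in\Gamma_{K,\geq 0}$, every ball $\lambda$-next to $C$ in $K$ is contained in a ball $\lambda$-next to $D$. Since $\rv_\lambda(\tau(x))$ for $x\in K$ is constant on balls $\lambda$-next to $D$ (restricted to $K$), it is a fortiori constant on balls $\lambda$-next to $C$, i.e.\ $\rv_\lambda(\tau(x))$ depends only on $(\rv_\lambda(x-c))_{c\in C}$. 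The set $C$ obtained from Lemma~\ref{lem:pull_back_from_rcl} is $\cL_\val(A)$-definable, hence certainly $\cL_{\val,\cB}(A)$-definable, which is what is claimed. (If one wants $C$ itself to witness the dependence through the $\rv_\lambda$-maps rather than just through balls, one notes that the fibres of $x\mapsto (\rv_\lambda(x-c))_{c\in C}$ are exactly the balls $\lambda$-next to $C$, so the two formulations coincide.)

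\textbf{Main obstacle.} The genuinely substantive work is all in the results being cited — Proposition~\ref{prop:term_structure} (term analysis over the real closure, built on the rings $\cO_I,\cO_D$ and the extended Weierstrass system $\cB[A]$) and Lemma~\ref{lem:pull_back_from_rcl} (the Rolle's-theorem pull-back). Given those, the only real care needed in this proof is bookkeeping: making sure the breakpoints, the data controlling the rational functions $R_i$, the centres $c_j$, and the finite exceptional sets are all folded into one finite set $D\subset A^\rcl$ before applying the pull-back lemma, and checking that "depends only on the tuple $(\rv_\lambda(x-c))_{c\in C}$" is correctly matched with "constant on balls $\lambda$-next to $C$''. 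I do not expect any serious difficulty beyond this organisational step.
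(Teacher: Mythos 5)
Your proposal is correct and follows essentially the same route as the paper: extend the analytic structure to $K^\rcl$, apply Proposition~\ref{prop:term_structure} over $A^\rcl\subset K^\rcl$ to get a finite controlling set $D\subset A^\rcl$, and pull it back to a finite $\cL_{\val}(A)$-definable set $C\subset K$ via Lemma~\ref{lem:pull_back_from_rcl}. If anything, you are slightly more explicit than the paper about the bookkeeping step of folding the breakpoints, the centres, the zeros and poles of the rational functions $R_i$, and the finite exceptional sets into the single set $D$ before pulling back.
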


\begin{proof}
Without loss of generality, we may assume that $\dcl_{\cL_{\cB}}(A) = A$, in particular $A$ is a subfield of $K$. We apply Proposition~\ref{prop:term_structure} to $A^{\rcl}$ in $K^{\rcl}$ to the term $\tau(x)$ to find a finite set $D\subset A^{\rcl}$ such that for $x\in K^{\rcl}$ the value of $\rv_{\lambda}(\tau(x))$ depends only on the tuple $(\rv_{\lambda}(x-d))_{d\in D}$. We now conclude by applying Lemma~\ref{lem:pull_back_from_rcl} to the set $D$.
\end{proof}

\section{Relative quantifier elimination}\label{sec:QE}

In this section we prove two relative quantifier elimination results. 

%\subsection{Relative quantifier elimination}

\subsection{Eliminating to $\RV_\lambda$}\label{sec:QE.RV}

The proof of h-minimality is based on relative quantifier elimination results in a larger many-sorted language.
First we apply the strategy of Denef--van den Dries \cite{DvdD} to reduce to the case where the variables over which we quantify occur only polynomially. Then we apply the relative quantifier elimination of result of Flenner \cite{Flen} for the pure valued field structure.
To go from relative quantifier elimination to $\omega$-h-minimality, we follow the method from~\cite[Sec.\,6]{CHR}.

For our first relative quantifier elimination result we work in a two-sorted setting with a valued field sort and an $\RV$-sort. We denote by $\cL_{\RV}$ the language consisting of the valued field language $\cL_{\val} = \{0,1,+,\cdot,\cO\}$ on the valued fields sort, the language $\{0,1,\cdot, \oplus\}$ on the $\RV$-sort, and a symbol $\rv$ for a map from the valued field to $\RV$. Any valued field may be interpreted as an $\cL_{\RV}$-structure where all symbols have their usual meaning, and $\oplus$ is a ternary relation on $\RV$ interpreted as
\[
\oplus(\alpha, \beta, \gamma) \Leftrightarrow \exists x,y,z\in K: \alpha=\rv(x) \wedge \beta =\rv(y)\wedge \gamma=\rv(z)\wedge x+y=z.
\]
We call this the \emph{partial addition}. We recall a first relative quantifier elimination result due to Flenner (see also \cite{Basarab} for an earlier result in a related language).

\begin{theorem}[{{\cite[Prop.\,4.3]{Flen}}}]\label{thm:flenner}
Let $K$ be a henselian valued field of equicharacteristic zero. Then $\Th_{\cL_{\RV}}(K)$ eliminates valued field quantifiers. Moreover, this still holds when the language is expanded by adding structure purely on $\RV$.
\end{theorem}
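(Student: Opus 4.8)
The plan is to establish this by the standard embedding test for relative quantifier elimination. Recall that $\Th_{\cL_{\RV}}(K)$ eliminates valued field quantifiers precisely when the following holds: for all $M,N\models\Th_{\cL_{\RV}}(K)$ with $N$ being $|M|^{+}$-saturated, every $\cL_{\RV}$-substructure $A\subseteq M$, every $\cL_{\RV}$-embedding $f\colon A\to N$ whose restriction to the $\RV$-sort is elementary, and every $a\in\VF(M)$, the map $f$ extends to an $\cL_{\RV}$-embedding of the substructure generated by $A$ and $a$. I would first record this criterion. The final clause of the theorem, about expanding the language purely on $\RV$, then comes for free: the valued-field part of the argument treats $\RV$ as a black box, so replacing $\cL_{\RV}$ by $\cL_{\RV}$ together with arbitrary extra $\RV$-sorted structure changes nothing, provided we keep the hypothesis that $f$ is elementary on $\RV$.

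For the one-variable extension step I would first reduce to the situation where $A$ is a henselian valued subfield of $M$ containing $\QQ$: one may replace $A$ by the fraction field of the ring it generates and then by its henselization, since both are immediate extensions (so the $\RV$-part is essentially unchanged) and the henselization embeds uniquely over $A$ into the henselian fields $M$ and $N$. Given $a\in\VF(M)\setminus A$, I would then analyse the cut of $a$ over $A$ through the set $S=\{v(a-c):c\in A\}\subseteq\Gamma_{M}$. If $S$ has a maximum attained at some $c_{0}\in A$, one normalises $a-c_{0}$ and separates a residue-field extension --- where the residue of $(a-c_{0})/d$, for a suitable $d\in A$ with $v(d)=v(a-c_{0})$, is either transcendental over or already equal to an element of the residue field of $A$, using Hensel's lemma and equicharacteristic zero to lift in the latter case and then iterating --- from a value-group extension, where the new element $\rv(a-c_{0})\in\RV(M)$ together with $\RV$-elementarity of $f$ supplies the required image in $N$. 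If $S$ has no maximum, then $a$ is a pseudo-limit of a pseudo-Cauchy sequence $(x_{\alpha})_{\alpha<\beta}$ in $A$ having no pseudo-limit in $A$.

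This last case is where I expect the main work to lie, and it is exactly where henselianity and residue characteristic zero are indispensable. Since $A$ is henselian of equicharacteristic zero, Kaplansky's analysis of pseudo-Cauchy sequences forces $(x_{\alpha})_{\alpha<\beta}$ to be of transcendental type --- an algebraic-type sequence would induce a residue-field or value-group enlargement already realised inside the henselian field $A$, contradicting the absence of a pseudo-limit there; compare the argument used in the proof of Theorem~\ref{thm:embedding} above. Consequently $\tp_{\cL_{\RV}}(a/A)$ is pinned down by the $\cL_{\RV}(A)$-data $c\mapsto\rv(a-c)$ already visible over $A$, and by saturation $N$ contains a pseudo-limit $a'$ of the image sequence $(f(x_{\alpha}))_{\alpha<\beta}$; sending $a$ to $a'$ yields the desired $\cL_{\RV}$-embedding. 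The remaining bookkeeping --- checking that in the residue-field and value-group cases the data to be matched is precisely what the $\RV$-sort records and is transported by an $\RV$-elementary map --- is then routine, and assembling the local steps into a back-and-forth completes the relative quantifier elimination, with the expansion-on-$\RV$ clause surviving verbatim.
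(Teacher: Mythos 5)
First, note that the paper does not prove this statement at all: it is imported verbatim from Flenner (his Prop.~4.3), so there is no internal proof to compare against, and Flenner's own argument is of a more syntactic flavour (a direct analysis showing that $\rv$ of polynomials in one variable is governed by finitely many centers, cf.\ his Prop.~3.6, which this paper also uses in Lemma~\ref{lem:pull_back_from_rcl}). Your embedding-test/back-and-forth strategy is the standard alternative route and is viable in principle, but as written the one-variable extension step has a genuine gap.

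Your trichotomy (residue-field extension / value-group extension / pseudo-limit of a sequence of transcendental type) tacitly treats the new element $a$ as behaving transcendentally over $A$. When the maximum of $S=\{v(a-c):c\in A\}$ is attained at $c_0$ and $v(a-c_0)$ is torsion modulo $v(A)$, the element $a$ is (after a Hensel correction) algebraic over $A$ --- think of $A$ henselian containing $t$ and $a$ a square root of $t$ in $M$. The substructure generated by $A$ and $a$ then satisfies exact polynomial identities such as $a^2=t$, which are not consequences of the data $c\mapsto\rv(a-c)$, so ``$\rv(a-c_0)$ together with $\RV$-elementarity of $f$ supplies the required image'' is not sufficient: you must invoke Hensel's lemma in $N$ to produce an actual root of the transported minimal polynomial whose $\rv$ matches the transported $\rv(a-c_0)$, and then check that this particular choice preserves all atomic $\cL_{\RV}$-formulas. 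Relatedly, in the residue-transcendental and value-transcendental subcases, the point you defer as ``routine bookkeeping'' --- that $\rv(p(a))$ for \emph{every} polynomial $p\in A[x]$ is determined by $\rv(a-c_0)$ and $\RV$-data over $A$ (a Gauss-valuation/Newton-polygon computation using henselianity of $A$) --- is precisely the technical heart of the theorem and cannot be assumed; it is the analogue of what Flenner proves directly. Two smaller points: passing to the fraction field of the ring generated by $A$ is not an immediate extension (both residue field and value group can grow), so you should instead argue that no new $\RV$-data is created outside what $f$ already controls, or simply first extend the $\RV$-part of $f$ to all of $\RV(M)$ by saturation; the immediate case itself is fine and mirrors case~(\ref{it:cl-final}) in the proof of Theorem~\ref{thm:embedding}.
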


However, we need quantifier elimination after adding symbols for the elements of a Weierstrass system. Moreover, to prove $\omega$-h-minimality, we need quantifier elimination to $\RV_\lambda$. So let $\cB$ be a real Weierstrass system which is strong, let $K$ be an almost real closed field equipped with $\cB$-analytic structure, and take $\lambda\in \Gamma_{K,\geq 0}$. Let $\cL_{\RV_\lambda,\cB}$ be the language with a valued field sort and an $\RV_\lambda$-sort, where we take the language $\cL_{\val, \cB}$ on the valued field sort, the map $\rv_\lambda$ from the valued field to $\RV_\lambda$, and the full induced structure on $\RV_\lambda$. In other words, we add a predicate for each $\emptyset$-definable subset of $\RV_\lambda^n$, for each $n$. 

\begin{prop}\label{prop:QE.RV}
With notation as above, $\Th_{\cL_{\RV_\lambda,\cB}}(K)$ eliminates valued field quantifiers.
\end{prop}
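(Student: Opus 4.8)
The plan is to reduce Proposition~\ref{prop:QE.RV} to Flenner's relative quantifier elimination (Theorem~\ref{thm:flenner}) together with the term analysis of Proposition~\ref{prop:term_analysis}. The general strategy for this kind of statement, going back to Denef--van den Dries and Pas, is a back-and-forth (or Shoenfield-style) argument: one shows that any $\cL_{\RV_\lambda,\cB}$-definable subset of a power $K^n$ of the valued field sort is, after an induction on $n$, defined by a quantifier-free formula, and it suffices to treat the case $n=1$, i.e.\ to show that a formula $\exists x\,\varphi(x,y)$ with $x$ a single valued-field variable and $\varphi$ quantifier-free is equivalent to a quantifier-free formula in $y$. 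A quantifier-free $\cL_{\RV_\lambda,\cB}$-formula $\varphi(x,y)$ is (a Boolean combination of) conditions of the form $\rho(\rv_\lambda(\tau_1(x,y)),\dots,\rv_\lambda(\tau_k(x,y)))$ where the $\tau_j$ are $\cL_{\cB}$-terms and $\rho$ is a predicate for a $\emptyset$-definable subset of $\RV_\lambda^k$, together with atomic conditions $\tau_j(x,y)\in\cO$; note the latter can be absorbed into the former since $\cO$ is $\rv_\lambda$-definable.

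The key step is then the following: for fixed parameters $y=b\in K^m$, and working with the $\cL_{\cB}(b)$-terms $\tau_j(x,b)$ in the single variable $x$, Proposition~\ref{prop:term_analysis} furnishes a finite $\cL_{\val,\cB}(b)$-definable set $C\subset K$ such that each $\rv_\lambda(\tau_j(x,b))$ depends only on the tuple $(\rv_\lambda(x-c))_{c\in C}$. Consequently the truth value of $\varphi(x,b)$ depends only on $(\rv_\lambda(x-c))_{c\in C}$, i.e.\ the set $\{x : \varphi(x,b)\}$ is a union of balls $\lambda$-next to $C$, hence it is $\lambda$-prepared by the finite $\cL_{\val,\cB}(b)$-definable set $C$. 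Such a set is nonempty if and only if one of the finitely many balls $\lambda$-next to $C$ that it could consist of is nonempty, and the ``$\lambda$-next to $C$'' structure, together with which balls lie in the set, can be read off in the $\cL_{\RV_\lambda,\cB}$-language purely from data about $\rv_\lambda$ of differences of elements of $C$ and of the $\tau_j$ evaluated at suitable points --- i.e.\ from $\RV_\lambda$-sort data attached to the $\cL_{\val,\cB}$-definable family $C=C(y)$. This is exactly the situation handled by Flenner's theorem applied in the language $\cL_{\val,\cB}$ on the field sort with the full induced structure on $\RV_\lambda$: once the $\cL_{\cB}$-terms have been ``tamed'' so that the only role of the field variable $x$ is through $\rv_\lambda$ of linear polynomials $x-c$, the remaining elimination is the classical henselian one.

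Concretely I would argue as follows. By a standard reduction it suffices to eliminate a single valued-field quantifier from $\exists x\,\varphi(x,y)$ with $\varphi$ quantifier-free, and by compactness (or by working in a monster model and then using a definability-of-types / uniformity argument) it suffices to do this pointwise in $y=b$ and then note that all the sets $C(b)$, the finitely many ball-patterns, and the resulting conditions are obtained uniformly in $b$ from the proof of Proposition~\ref{prop:term_analysis}, which is itself uniform because it rests on the term-structure Proposition~\ref{prop:term_structure} (uniform over $\cL_{\cB}$-substructures) and the pull-back Lemma~\ref{lem:pull_back_from_rcl} (which produces an $\cL_{\val}(A)$-definable set). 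Having replaced $\varphi(x,b)$ by an equivalent condition ``$\big(\rv_\lambda(x-c)\big)_{c\in C(b)}\in S$'' for an $\cL_{\RV_\lambda,\cB}(b)$-definable $S\subseteq \RV_\lambda^{C(b)}$, the existence of such an $x$ is equivalent to $S$ meeting the image of the ``valuative configuration'' map $x\mapsto (\rv_\lambda(x-c))_{c\in C(b)}$, and this image is cut out in $\RV_\lambda$ by a formula not involving the field sort (this is precisely where Flenner's Theorem~\ref{thm:flenner}, or rather its proof/content about henselian valued fields, is used: the field sort has already been stripped away). This yields a quantifier-free $\cL_{\RV_\lambda,\cB}$-formula in $y$, completing the elimination.

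The main obstacle I expect is the uniformity in the parameters $y$: Proposition~\ref{prop:term_analysis} is stated for a fixed parameter set $A\subset K$, and to get genuine quantifier elimination (rather than a model-by-model statement) one must check that the preparing set $C$, the finite data describing which balls $\lambda$-next to $C$ belong to the definable set, and hence the resulting $\RV_\lambda$-condition, can all be chosen uniformly definably in $y$. This should follow by inspecting the proofs of Propositions~\ref{prop:term_structure} and~\ref{prop:term_analysis} --- the coverings by intervals/annuli, the splitting into units and strong units, and the pull-back from the real closure are all first-order and depend on $y$ only through $\cL_{\cB}$-terms --- but it requires care, and is the step where one genuinely uses that all the constructions in Section~\ref{sec:term.analaysis} were engineered to be definable over the field sort. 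A secondary, more routine point is the bookkeeping needed to absorb the atomic formulas $\tau_j(x,y)\in\cO$ and the partial-addition/ring structure on $\RV_\lambda$ into the ``full induced structure'' so that Flenner's theorem applies verbatim after the term taming.
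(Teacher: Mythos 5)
There is a genuine gap, and it lies at the heart of your strategy rather than in the bookkeeping you flag. You propose to deduce the quantifier elimination from Proposition~\ref{prop:term_analysis}: for fixed $y=b$ you obtain a finite $\cL_{\val,\cB}(b)$-definable set $C(b)$ such that $\varphi(x,b)$ depends only on $(\rv_\lambda(x-c))_{c\in C(b)}$, and you then want to express ``the image of the configuration map meets $S_b$'' as a quantifier-free condition on $y$. But the elements of $C(b)$ are merely \emph{definable} from $b$ (by a formula which may itself contain field quantifiers); they are not given by $\cL_{\cB}$-terms in $b$. A valued-field-quantifier-free $\cL_{\RV_\lambda,\cB}$-formula in $y$ can only access the field sort through $\rv_\lambda$ of terms in $y$, so any condition phrased in terms of $\rv_\lambda(c-c')$, $\rv_\lambda(\tau_j(c,b))$, etc.\ for $c,c'\in C(b)$ is not of the required shape, and re-quantifying over the members of $C(y)$ reintroduces exactly the field quantifiers you are trying to eliminate. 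The deeper version of the same problem appears in your uniformity step: to conclude that the truth of $\exists x\,\varphi(x,y)$ depends only on the quantifier-free $\RV_\lambda$-data of $y$, you would need to know that the defining formula of $C(y)$, the ball-pattern, and the set $S_y$ are all controlled by that quantifier-free data --- which is essentially the relative quantifier elimination you are trying to prove. So the semantic preparation statement does not by itself yield the syntactic QE statement; it is the \emph{combination} of the QE with Proposition~\ref{prop:term_analysis} that the paper uses (in Section~\ref{sec:h.min.proof}) to get $\omega$-h-minimality, not the other way around. A further, smaller point: Theorem~\ref{thm:flenner} allows extra structure only on $\RV$ and only the pure valued field language on the field sort, and is stated for $\RV$ rather than $\RV_\lambda$; for general $\lambda$ one needs~\cite[Prop.\,6.2.3]{CHR}, and one must first have reduced, inside the formula itself, to polynomial occurrences of the variable being eliminated.

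The paper's actual proof is syntactic and does not invoke the term analysis at all: after absorbing divisions and compositions by introducing auxiliary valued-field variables, one uses the Strong Noetherian Property and Weierstrass division (following Denef--van den Dries and Cluckers--Lipshitz--Robinson), together with compactness of $[-1,1]_\RR^{m+n}$, to rewrite each term as (unit)$\times$(polynomial in the innermost variable $z_m$), where $\rv_\lambda$ of the unit factors through $\rv_\lambda$ of its arguments; the quantifier over $z_m$ is then eliminated by Theorem~\ref{thm:flenner} (or~\cite[Prop.\,6.2.3]{CHR} for nontrivial $\lambda$), and one iterates over the remaining auxiliary variables. If you want to salvage a more semantic route, you would have to set up a proper back-and-forth or embedding test for relative QE and show that quantifier-free $\RV_\lambda$-types determine types, which is substantially more than what is written here.
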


\begin{proof}
First note that any valued field quantifier-free $\cL_{\RV_\lambda,\cB}$-formula is equivalent to one of the form $\phi(\rv_\lambda t_1(x, y), \ldots, \rv_\lambda t_r(x, y))$ where $\phi$ is a formula in the $\RV_\lambda$-sort with $\RV_\lambda$-parameters, and the $t_i$ are $\cL_{\cB, \RV_\lambda}$-terms with $x$ and $y = (y_1, \ldots, y_n)$ valued field variables. So it suffices to eliminate the field quantifier from a formula of the form
\[
\exists x \, \phi(\rv_\lambda t_1(x, y), \ldots, \rv_\lambda t_r(x, y)).
\]
By cutting up the domain and using the map $a \mapsto 1/a$, we can assume that all variables run over $[-1,1]_K$. We can introduce new variables so that the formula $\phi(\rv_\lambda t_1(x, y), \ldots, \rv_\lambda t_r(x, y))$ is equivalent to a formula of the form
\[
\exists z_1, \ldots, z_m\, \theta (\rv_\lambda f_1(z, y), \ldots, \rv_\lambda f_s(z, y)),
\]
where $z = (z_1, \ldots, z_m)$ are valued field variables, the $f_i$ are either in $B_{m+n, \alpha}$ or a polynomial over $K$, and no field division occurs. Let $(z_0, y_0)\in [-1,1]^{m+n}$. Using the Strong Noetherian property from~\cite[Theorem 3.2.2]{CLip}, there exist an integer $d$ and a real number $\beta(z_0, y_0)>0$, unit elements $u_{ij}\in B^{\circ}_{m+n,\beta}$, elements $h_{ij}\in B_{n,\beta}$ and  a subset $J_i$ of $\{0,...,d\}^m$ such that for every $i = 1, \ldots, s$,
\[
f_i(z, y)=\sum_{j\in J_i} h_{ij}(y-y_0)(z-z_0)^j u_{ij}(z-z_0,y-y_0)
\]
if $|z-z_0|, |y-y_0|<\beta(z_0,y_0)$. Since the real cube $[-1,1]_{\RR}^{m+n}$ is compact, we only need to eliminate the quantifiers from
\[
\exists z  \,\, |z-z_0| < \beta(z_0, y_0)\wedge |y-y_0| < \beta(z_0, y_0)\wedge \theta (\rv_\lambda f_1(z, y), \ldots, \rv_\lambda f_s(z, y)),
\]
for finitely many points $(z_0, y_0)\in [-1, 1]_{\RR}^{m+n}$. Without loss of generality, we can assume that $(z_0, y_0) = (0,0)$. Now, by a coordinate transformation and Weierstrass division similar to~\cite[Proof of Theorem 2.14]{CLRr} and \cite[Lemma 4.13 and Proof 4.14]{DvdD}, we can shrink $\beta$ if needed and write $f_i(z,y)$ as 
\[
h_{ij}(y)v_i\left(y,\frac{h_{i \ell}(y)}{h_{j \ell}(y)}-c, z\right) g_i\left(y,\frac{h_{i \ell}(y)}{h_{j \ell}(y)}-c, z\right),
\]
for some fixed $j$, $c\in [-1,1]_{\RR}$, where $v_i$ is a unit and $g_i$ polynomial in the last variable $z_m$. Now, if $v\in B_{n', \alpha}^\circ$ is a unit, then by construction of our language, there exists a definable map $\overline{v}$ on $\RV_\lambda^{n'}$ such that $\rv_\lambda(v(x)) = \overline{v}(\rv_\lambda(x))$. Hence $\rv_\lambda(f_i(z,y))$ depends only on $\rv_\lambda(z,y), \rv_\lambda(h_{i \ell}(y)/h_{j \ell}(y)-c)$ and $\rv_\lambda(g_i( y,h_{i \ell}(y)/h_{j \ell}(y)-c, z))$. In other words, the variable $z_m$ only occurs polynomially in our formula (with some parameters depending on $y$). If $\lambda = 1$, we can then conclude from Theorem~\ref{thm:flenner} to eliminate the quantifier over $z_m$ and iterate this procedure. If $\lambda\neq 1$, then the result follows from~\cite[Prop.\,6.2.3]{CHR}.
\end{proof}

\subsection{Eliminating to the residue field and value group}\label{sec:QE.RF.VG}

In this section we prove a variant of the relative quantifier elimination result from the previous section, down to the residue field and the value group. We will choose a more specific language, rather than taking the full induced structure on $\RV$, which will give us an understanding of the induced structure on the residue field and the value group.

Let $\cL_{\ac}$ denote the three-sorted language with sorts $\VF, \RF$ and $\VG$ with $\cL_{\val}$ on $\VG$, the ring language on $\RF$, and $\cL_{\oag}$ on $\VG$. We add the valuation $\norm{\cdot}$ from $\VF$ to $\VG$, and an angular component map $\ac: \VF\to \RF$, which we recall is a multiplicative morphism $\ac: \VF^\times \to \RF^\times$ extending the residue map on $\cO^\times$. In this setting, Pas has proven relative quantifier elimination (see also \cite{Wei76} for a similar result in a related language).

\begin{theorem}[{{\cite[Thm.\,4.1]{Pas}}}]\label{thm:Pas.QE}
Let $K$ be a henselian valued field of equicharacteristic zero with an angular component map $\ac$. Then $\Th_{\cL_{\ac}}(K)$ eliminates valued field quantifiers.
\end{theorem}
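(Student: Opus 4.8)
The plan follows the classical route to relative quantifier elimination. It is enough to eliminate a single valued-field quantifier $\exists x$ from a quantifier-free $\cL_{\ac}$-formula $\varphi(x,\bar y,\bar\eta,\bar\gamma)$, where $\bar y$ is a tuple of $\VF$-parameters, $\bar\eta$ a tuple of $\RF$-parameters and $\bar\gamma$ a tuple of $\VG$-parameters. Putting $\varphi$ into disjunctive normal form and distributing $\exists x$ over the disjunction, I reduce to a single conjunction of literals, and the literals not containing $x$ come outside the quantifier. Since $\cL_{\val}$ has no inversion every $\VF$-term is a polynomial, and since the $\RF$- and $\VG$-sorts carry only the ring language and $\cL_{\oag}$, every atomic subformula mentioning $x$ is either a $\VF$-equality $f(x,\bar y)=0$, an $\RF$-condition on finitely many $\ac\bigl(g_i(x,\bar y)\bigr)$, or a $\VG$-condition on finitely many $\norm{h_i(x,\bar y)}$, with $f,g_i,h_i$ polynomial in $x$ and coefficients polynomial in $\bar y$. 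A positive $\VF$-equality $f(x,\bar y)=0$ with $f\not\equiv 0$ is removed by the usual device: its solution set is finite, and once the preparation step below exhibits the $K$-roots of $f$ as definable parameters, $\exists x$ restricted to that set is a finite disjunction of substitutions. Hence I may assume no $\VF$-equality is present, at the harmless cost of deleting finitely many points of $K$.

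The technical heart is a \emph{polynomial preparation lemma}, uniform in parameters: given polynomials $p_1(x),\dots,p_N(x)\in K[x]$ there is a finite set $C\subseteq K$, definable from the $\ac$ and $\norm{\cdot}$ of their coefficients, such that on every nonempty fibre $B$ of the map $x\mapsto\bigl(\ac(x-c),\norm{x-c}\bigr)_{c\in C}$ the quantities $\ac\bigl(p_i(x)\bigr)$ and $\norm{p_i(x)}$ are given, respectively, by fixed terms in $\bigl(\ac(x-c)\bigr)_{c\in C}$ and in $\bigl(\norm{x-c}\bigr)_{c\in C}$ together with the coefficient data. I would prove this by induction on $\sum_i\deg p_i$. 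For the inductive step one analyses a polynomial $p$ of maximal degree through its Newton polygon: translating by an approximate root produced by Hensel's lemma — available since $K$ is henselian of equicharacteristic zero — one reaches the situation where, for $x$ whose valuation is in generic position relative to the vertices of the Newton polygon, exactly one monomial $a_{j_0}x^{j_0}$ of $p$ has strictly smallest valuation, so that $\norm{p(x)}=\norm{a_{j_0}x^{j_0}}$ and $\ac\bigl(p(x)\bigr)=\ac(a_{j_0})\,\ac(x)^{j_0}$; the finitely many critical valuations of $x$, and the balls around the roots of $p$ that they cut out, are absorbed by enlarging $C$ and recursing on $\deg p'<\deg p$. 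The delicate point — and the main obstacle of the whole argument — is to carry this out so that $C$ and the describing terms depend on $\bar y$ through a single $\cL_{\ac}$-formula; this uniformity is exactly the content of the classical arguments of Pas and of later uniform cell decomposition, and one could instead organise the proof so as to start from Flenner's Theorem~\ref{thm:flenner} and refine the $\RV$-sort by means of the section of $\RV^\times\to\Gamma$ supplied by $\ac$.

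Granting the preparation lemma, I finish as follows. Apply it to all polynomials occurring in $\varphi$, using the substitution $x\mapsto 1/x$ to also cover the region $\norm{x}>1$. On the fibre $B$ determined by a tuple of values $(u_c)_{c\in C}\in\RF^{|C|}$ and $(\delta_c)_{c\in C}\in\VG^{|C|}$, every $\ac\bigl(g_i(x,\bar y)\bigr)$ equals a term in the $u_c$ and the coefficient data and every $\norm{h_i(x,\bar y)}$ equals a term in the $\delta_c$ and the data, so after substituting these the conjunction becomes an $\cL_{\ac}$-condition of the shape $\Theta\bigl((u_c)_c,\bar\eta,\dots\bigr)\wedge\Psi\bigl((\delta_c)_c,\bar\gamma,\dots\bigr)$ in the $\RF$- and $\VG$-sorts, with no occurrence of $x$. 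It then remains only to describe, without a valued-field quantifier, which tuples $\bigl((u_c)_c,(\delta_c)_c\bigr)$ correspond to a nonempty fibre, i.e.\ are realised by some $x\in K$. This ``genericity of points in a ball'' is again a Hensel-type statement: once the $(\delta_c)$ pin down a ball, the angular component $\ac(x-c)$ of $x$ can be prescribed freely, and the only remaining constraints are the compatibilities among the different centres, which are governed entirely by the $\norm{c-c'}$ and $\ac(c-c')$ and hence are expressible in the $\RF$- and $\VG$-sorts alone. Substituting this description, the original $\exists x$ is replaced by an existential quantifier over $\RF^{|C|}$ and one over $\VG^{|C|}$, and no valued-field quantifier remains.
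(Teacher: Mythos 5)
The paper does not prove this statement at all: it is quoted directly from Pas \cite[Thm.\,4.1]{Pas}, so there is no internal proof to compare against; what matters is whether your sketch would stand on its own, and as written it does not. The genuine gap is that your ``polynomial preparation lemma'', uniform in the parameters $\bar y$, \emph{is} the content of Pas's theorem (equivalently, of Denef--Pas cell decomposition), and you explicitly defer exactly that point: you write that the required uniformity ``is exactly the content of the classical arguments of Pas''. The Newton-polygon/Hensel induction you outline is the right local picture for a single fixed polynomial over a fixed field, but the whole difficulty is producing the finite centre set $C$ and the describing $\RF$- and $\VG$-terms by a single $\cL_{\ac}$-construction valid in every model and for all parameter values simultaneously (the centres must be given by definable functions of $\bar y$, approximate roots produced by Hensel's lemma need not lie in $K$ or be definable, and the case distinctions along the Newton polygon must themselves be made quantifier-free). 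Without that, the argument is circular. A secondary flaw: your removal of a $\VF$-equality $f(x,\bar y)=0$ by ``substituting the $K$-roots of $f$ as definable parameters'' is not available, since $\cL_{\ac}$ has no Skolem functions and individual roots are in general not definable elements; the standard fix is to absorb $f(x)=0$ into the valuation-theoretic data (e.g.\ as $\norm{f(x)}=\infty$, handled inside the preparation) rather than to substitute roots.

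If you want a complete argument compatible with what this paper actually uses, the cleanest route is the one you mention only in passing: start from Flenner's $\RV$-elimination (Theorem~\ref{thm:flenner}, which allows arbitrary extra structure on $\RV$), note that an angular component map splits the exact sequence $1\to k^\times\to\RV^\times\to\Gamma_K\to 0$, so that $\rv(x)\mapsto(\ac(x),\norm{x})$ identifies $\RV$ with $k^\times\times\Gamma_K$ definably, translate any $\cL_{\ac}$-formula into an $\cL_{\RV}$-formula over this enriched $\RV$-sort, eliminate the valued-field quantifiers there, and translate back, turning $\RV$-quantifiers into $\RF$- and $\VG$-quantifiers. That derivation is short and avoids redoing the cell-decomposition machinery, whereas your current draft leaves the key uniform preparation step unproved.
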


We extend this result by adding analytic structure from a real Weierstrass system. So let $\cB$ be a real Weierstrass system over $\Omega$ which is rich. From $\cB$ we construct another Weierstrass system $\cB'$ which we will use for the language on the residue field. The Weierstrass system $\cB'$ will have value group $0$, and $B'_{n, \alpha}\subset \RR[[\xi_1, \ldots, \xi_n]]$ is generated as an algebra over $\RR[\xi_1, \ldots, \xi_n]$ by all top slices of units of $\cB_{n, \alpha}$. Note that this is indeed a real Weierstrass system, as each element of $\cB'$ is a top slice of some element of $\cB$. As an example, if $\cB$ is the full Weierstrass system $\cA((\Omega))$, then $\cB'$ is the full Weierstrass system $\cA((0))$. In other words, $\cB'$ essentially consists of restrictions of analytic functions  to compact subsets of $\RR^n$.

We consider the following three-sorted language $\cL_{\ac, \cB}$ with sorts $\VF$, $\RF$ and $\VG$ for the valued field, the residue field, and the value group respectively. We equip $\VF$ with the language $\cL_{\val, \cB} = \cL_{\val}\cup \cL_{\cB}$, $\VG$ with the language of ordered abelian groups $\cL_{\oag}$, and $\RF$ with language $\cL_{\cB'}$. As maps between the sorts, we add the valuation $\norm{\cdot}$ from $\VF$ to $\VG$, and an angular component map $\ac$ from $\VF$ to $\RF$. Recall that $B_0$ sits naturally inside $\VF$, and since $\cB$ is rich $\RR[\Omega]$ is a subset of $B_0$. If $K$ is an $\cL_{\ac, \cB}$-structure, we say that $\ac$ and $\cB$ are \emph{compatible} if for each element $\omega\in \Omega$, $\ac(t^\omega) = 1$. 

\begin{lem}
Let $\cB$ be a real Weierstrass system over $\Omega$ which is rich, and let $K$ be an almost real closed field with $\cB$-analytic structure. Then $K$ equipped with the natural valuation has $\cL_{\ac, \cB}$-structure such that $\ac$ and $\cB$ are compatible.
\end{lem}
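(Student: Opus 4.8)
The plan is to exhibit an explicit angular component map $\ac\colon K^\times\to\RR^\times$ and check that it is a multiplicative homomorphism extending the residue map on units, and moreover that $\ac(t^\omega)=1$ for all $\omega\in\Omega$. Recall from Lemma~\ref{lem:cB.implies.almost.real.closed} that $K$ equipped with the canonical valuation $\norm{\cdot}\colon K^\times\to G$ is henselian with residue field $\RR$. The key observation is that there is a canonical section of the residue map: for $x\in K^\circ$, the element $x^\circ\in\RR\subset K$ is the unique element with $x-x^\circ\in\Kinf$, and in particular for a unit $x\in (K^\circ)^\times$ one has $x^\circ\neq 0$.

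First I would produce a section $s\colon G\to K_{>0}$ of the canonical valuation which sends $\omega\in\Omega$ to $t^\omega$. This is exactly what Lemma~\ref{lem:extend_section} provides: the inclusions $\RR(\Omega)\subset B_0\subset K$ give a partial section $\Omega\to K_{>0}$, $\omega\mapsto t^\omega$, which by Lemma~\ref{lem:extend_section} extends to a section $s\colon G\to K_{>0}$. Then I would define
\[
\ac(x) = \left(\frac{x}{s(\norm{x})}\right)^\circ \quad\text{for } x\in K^\times,
\]
and $\ac(0)=0$. This is well-defined since $x/s(\norm{x})$ has canonical valuation $1$, hence lies in $(K^\circ)^\times$, so its residue is a nonzero real number; moreover since $s(\norm{x})>0$ and we will want to be careful about signs, note $\ac(x)$ has the same sign as $x$ (as $s$ takes positive values and $(\cdot)^\circ$ preserves the order), so $\ac$ lands in $\RR^\times$.

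Next I would verify the defining properties of an angular component map. Multiplicativity: for $x,y\in K^\times$, $\norm{xy}=\norm{x}\norm{y}$ and $s$ is a group homomorphism, so $s(\norm{xy})=s(\norm{x})s(\norm{y})$, whence $xy/s(\norm{xy}) = (x/s(\norm{x}))(y/s(\norm{y}))$; since the residue map $(\cdot)^\circ$ is a ring homomorphism on $K^\circ$ and both factors are units, $\ac(xy)=\ac(x)\ac(y)$. Extension of the residue map on $\cO^\times$: recall $\cO_K=K^\circ$ here since we use the canonical valuation, so if $\norm{x}=1$ then $s(\norm{x})=s(1)=1$ and $\ac(x)=x^\circ$, which is precisely the residue of $x$. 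Compatibility with $\cB$: for $\omega\in\Omega$, $\norm{t^\omega}=\omega$ (by the way $\Omega\hookrightarrow G$ is set up, using that $\norm{t^\omega}$ is the image of $\omega$ under the canonical embedding; here one uses that the Gauss norm of $t^\omega$ in $\RR(\Omega)\subset B_0$ maps to $\omega\in G$) and $s(\omega)=t^\omega$ by construction, so $t^\omega/s(\norm{t^\omega}) = t^\omega/t^\omega = 1$ and $\ac(t^\omega)=1^\circ=1$. Finally I would note that $K$ naturally interprets all the sorts and symbols of $\cL_{\ac,\cB}$: $\VF$ carries the $\cL_{\val,\cB}$-structure (with $\cO$ interpreted as $K^\circ$, and the function symbols of $\cB$ via $\sigma$), $\RF=\RR$ carries the $\cL_{\cB'}$-structure since $\cB'$ consists of top slices of elements of $\cB$ which evaluate on $\RR$ via the composition of $\sigma$ and the residue map, $\VG=G$ carries the ordered abelian group structure, $\norm{\cdot}$ and the above $\ac$ give the sort maps.

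The main obstacle, though fairly minor, is bookkeeping around the two places where ``residue field $\RR$'' and ``section'' interact: one must be careful that the embedding $\Omega\hookrightarrow G$ used implicitly in the statement is the one induced by $\RR(\Omega)\subset B_0\subset K$ and that under it $\norm{t^\omega}$ really equals $\omega$, so that the partial section $\omega\mapsto t^\omega$ is genuinely a section of the valuation and Lemma~\ref{lem:extend_section} applies; and secondly one should double-check that an angular component map in this three-sorted setup is only required to be defined on $K^\times$ (multiplicatively) and restrict to the residue map on $\cO^\times$, so that the sign issue is not a problem — $\ac$ simply remembers the sign of $x$ in addition to the usual residue-theoretic data, which is consistent with $\RR^\times$ having two components. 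No deeper input from the analytic structure is needed beyond henselianity and the residue field being $\RR$, both already established.
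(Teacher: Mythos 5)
Your proposal is correct and follows essentially the same route as the paper: extend the partial section $\omega\mapsto t^\omega$ to a full section $s\colon G\to K_{>0}$ via Lemma~\ref{lem:extend_section}, and set $\ac(x)=\res\bigl(s(\norm{x})^{-1}x\bigr)$, interpreting all remaining symbols of $\cL_{\ac,\cB}$ in the obvious way. The additional verifications you spell out (multiplicativity, restriction to the residue map on units, $\ac(t^\omega)=1$) are exactly the routine checks the paper leaves implicit.
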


\begin{proof}
We interpret all symbols of $\cL_{\ac, \cB}$, except for $\ac$, in the usual way. Note that the residue field of $K$ is simply $\RR$, so that the language on the residue field makes sense. 

We need to construct a compatible angular component map. For this, recall that $\Omega$ is a subgroup of the valuation group $G$ of $K$, and hence by Lemma~\ref{lem:extend_section} the partial section $\Omega\subset G\to K: \omega\mapsto t^\omega$ extends to a section $s: G\to K$. Now define $\ac(x) = \res(s(\norm{x})^{-1}x)$ to conclude.
\end{proof}

From now on, whenever we consider an almost real closed field $K$ as an $\cL_{\ac, \cB}$-structure, we always assume that $\ac$ and $\cB$ are compatible. 

\begin{prop}\label{prop:QE.RF.VG}
Let $\cB$ be a rich real Weierstrass sytem, and let $K$ be an almost real closed field. Then $\Th_{\cL_{\ac, \cB}}(K)$ eliminates valued field quantifiers. In particular, the sorts $\RF$ and $\VG$ are orthogonal.
\end{prop}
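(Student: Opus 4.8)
The plan is to adapt the proof of Proposition~\ref{prop:QE.RV} to the three-sorted $\cL_{\ac, \cB}$-setting, using Pas's relative quantifier elimination (Theorem~\ref{thm:Pas.QE}) in place of Flenner's. As in the two-sorted case, the first reduction is to bring an arbitrary valued field quantifier-free formula into the shape $\exists x\, \phi$, where $\phi$ is built from $\ac(t_i(x,y))$ and $\norm{t_i(x,y)}$ for $\cL_{\cB}$-terms $t_i$, together with the residue-field language $\cL_{\cB'}$ and the ordered abelian group language on the value group. Using the change of variables $a\mapsto 1/a$, we may assume all valued-field variables range over $[-1,1]_K$, and by introducing auxiliary variables we may assume the terms occurring are elements of some $B_{m+n,\alpha}$ (or polynomials over $K$), with no field inversion. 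Then apply the Strong Noetherian Property from~\cite[Thm.~3.2.2]{CLip} together with compactness of $[-1,1]^{m+n}_{\RR}$ to reduce, exactly as in the proof of Proposition~\ref{prop:QE.RV}, to a finite list of points $(z_0,y_0)$ which we may take to be the origin, and after a coordinate transformation and Weierstrass division we reduce to the situation where each $f_i(z,y)$ is written as a product of a parametrized unit $v_i$ with something polynomial in the last variable $z_m$.

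The key new point is how $\ac$ and $\norm{\cdot}$ interact with such a unit. If $v\in B^{\circ}_{n',\alpha}$ is a unit, then $\norm{v(a)} = 1$ for all $a\in[-1,1]_K^{n'}$ by Lemma~\ref{le:automatic_continuity} together with Lemma~\ref{le:unit}, so $v$ contributes nothing to the value group. For the angular component, $\ac(v(a))$ equals the residue $\res(v(a))$ since $v(a)$ is a unit in $\cO_K$; and $\res(v(a))$ is obtained by applying to $\res(a)$ the \emph{top slice} of $v$, which by construction is an element of $\cB'$. Thus $\ac(v(a))$ is an $\cL_{\cB'}$-term in $\ac(a)$. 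Combining this with compatibility of $\ac$ and $\cB$ (so that the section factors cleanly and $\ac(h_{i\ell}(y)/h_{j\ell}(y)-c)$ makes sense as a residue-field element), we conclude that $\ac(f_i(z,y))$ and $\norm{f_i(z,y)}$ each depend only on residue-field and value-group data attached to $\ac(z,y), \norm{z,y}$ and to finitely many further residue-field quantities, together with the polynomial dependence of $g_i$ on $z_m$. Hence, after this rewriting, the variable $z_m$ occurs only polynomially in the formula (with parameters depending on $y$ and residue-field/value-group auxiliary variables), and we may invoke Theorem~\ref{thm:Pas.QE} to eliminate the quantifier over $z_m$, then iterate over the remaining valued-field variables. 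Orthogonality of $\RF$ and $\VG$ is then an immediate formal consequence: a quantifier-free $\cL_{\ac, \cB}$-formula with only residue-field and value-group variables cannot mix the two sorts, since $\ac$ and $\norm{\cdot}$ take inputs from $\VF$ only.

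The main obstacle I anticipate is making the reduction to ``polynomial in $z_m$'' fully rigorous in the three-sorted language: one must check that every occurrence of $z_m$ inside an $\ac$ or $\norm{\cdot}$ after the Weierstrass division and coordinate change really does factor through a parametrized unit times a polynomial, and that the ``parameters'' appearing (the $h_{i\ell}(y)/h_{j\ell}(y)-c$, and the various $\ac$ and $\norm{\cdot}$ of units) can all be named by new sort-appropriate variables over which Pas's theorem allows us to conclude. This is the same delicate bookkeeping as in~\cite[Proof of Theorem 2.14]{CLRr} and~\cite[Lemma 4.13, Proof 4.14]{DvdD}, but now spread across three sorts; the compatibility assumption on $\ac$ and $\cB$ is exactly what keeps the section $s\colon G\to K$ from introducing spurious residue-field dependence. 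Once that bookkeeping is in place, the induction and the final appeal to Theorem~\ref{thm:Pas.QE} are routine, and orthogonality falls out of the shape of quantifier-free formulas.
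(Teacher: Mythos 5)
Your proposal follows essentially the same route as the paper's proof: reduce by the same syntactic normalization and the Strong Noetherian Property/compactness/Weierstrass division steps of Proposition~\ref{prop:QE.RV} to a formula depending only polynomially on the last valued-field variable, observe that a unit $v$ contributes trivially to the value group ($\norm{v}=1$) and contributes to the residue field only through its top slice in $\cB'$, and then conclude with Theorem~\ref{thm:Pas.QE} in place of Flenner, with orthogonality falling out of the relative quantifier elimination and the choice of language. This matches the paper's argument, including its level of detail on the bookkeeping you flag as the main obstacle.
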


\begin{proof}
The proof is largely the same as the proof of Proposition~\ref{prop:QE.RV}, but we give some details. By syntactical considerations, it suffices to eliminate the valued field quantifier from a formula of the form
\[
\exists\, x \phi(\ac (t_1(x,y), \ldots, \ac(t_r(x,y))\wedge \psi(\norm{t_1(x,y)}, \ldots, \norm{t_r(x,y)}),
\]
where $\phi$ is a formula (with parameters) over $\RF$, $\psi$ is a formula (with parameters) over $\VG$, and $t_1, \ldots, t_r$ are $\cL_{\cB}$-terms. Now follow the proof of Proposition~\ref{prop:QE.RV} exactly, until one has written $f_i(z,y)$ on $[-\beta, \beta]_K$ as
\[
h_{ij}(y)v_i\left(y,\frac{h_{i \ell}(y)}{h_{j \ell}(y)}-c, z\right) g_i\left(y,\frac{h_{i \ell}(y)}{h_{j \ell}(y)}-c, z\right),
\]
where $v_i$ is a unit in $\cB$ and $g_i$ depends polynomially on the last variable $z_m$. Our formula depends only on $\ac (f_j(z,y))$ and $\norm{f_j(y,z)}$. Now if $v$ is a unit, its top slice $v_{0}$ is a symbol in the language on the residue field and we have that $\ac(v(x)) = v_0(\ac(x))$. Since also $\norm{v} = 1$, we conclude that our formula depends only polynomially on $z_m$. Hence by Theorem~\ref{thm:Pas.QE} we may eliminate the valued field quantifier, and iterate the procedure.

The last fact about orthogonality follows directly from the relative quantifier elimination and our choice of language.
\end{proof}

\begin{remark}
The compatibility assumption is included to simplify the treatment. If one drops the assumption that $\ac$ and $\cB$ are compatible, we have to put slightly more structure on the residue field. Namely, we have to name constants from $B_0$ by including symbols for the values of $\ac(t^\omega)$, for each $\omega\in \Omega$. Note that this uses the richness assumption. In this larger language, one has relative quantifier elimination again. See also Remark~\ref{rem:ac.cB.not.compatible}.
\end{remark}

\begin{remark}\label{rem:any.convex.valuation}
For an arbitrary convex valuation, one can still obtain relative quantifier elimination by adding symbols for all elements of $\cB$ to the language on the residue field. However, it is then not so clear what the language on the residue field actually is. We expect that in general, the residue field will be another almost real closed field equipped with $\cB''$-analytic structure, for some related Weierstrass system $\cB''$ over a quotient $\Omega''$ of $\Omega$. In fact, this is what happens above for the natural valuation. Indeed, the new Weierstrass system $\cB'$ constructed above has value group $\Omega' = 0$, which is related to the fact that the residue field $\RR$ has value group $0$.
\end{remark}

\section{Proofs of our main results}

\subsection{Hensel minimality}\label{sec:h.min.proof}

With all the results from the previous sections, we can quickly deduce that almost real closed fields equipped with analytic structure are $\omega$-h-minimal. 

\begin{proof}[Proof of Theorem~\ref{thm:real.h.minimal}]
Recall that $\cB$ is a real Weierstrass system which is strong and rich, that $K$ is an almost real closed field with real analytic $\cB$-structure, and that $\cO_K$ is a convex valuation ring of $K$. We wish to prove that the theory of $K$ in $\cL_{\val,\cB}$ is $\omega$-h-minimal.

Let $K'$ be elementarily equivalent to $K$, $A\subset K'$ and $X\subset K'$ be $(A\cup \RV_{K', \lambda})$-definable, for some $\lambda$ in the value group of $K'$. Let $\phi(x)$ be an $(A\cup \RV_{K', \lambda})$-formula defining $X$. 

By Lemma~\ref{prop:QE.RV} we may assume that $\phi(x)$ contains no valued field quantifiers, and hence by inspection it is enough to $\lambda$-prepare finitely many functions of the form $x\mapsto \rv_{\lambda}(\tau(x))$, where $\tau$ is an $A$-term in one variable. Now apply Proposition~\ref{prop:term_analysis} to conclude.
\end{proof}

We briefly discuss some consequences of $\omega$-h-minimality about tameness of definable objects. Let $K$ be an almost real closed field equipped with $\cB$-analytic structure, for some real Weierstrass system which is rich and strong. We fix some convex valuation $v: K^\times \to \Gamma$ on $K$, then we know that the resulting theory is $\omega$-h-minimal in the language $\cL_{\val, \cB}$. 

We begin with some results which follow directly from~\cite{CHR}.

\begin{enumerate}
\item (Jacobian property~\cite[Lem.\,2.8.5]{CHR}) If $f: K\to K$ is a definable function, then there exists a finite set $C\subset K$ such that for each ball $B$ disjoint from $C$ there exists a $\lambda\in \Gamma$ such that for all $x,y\in B$
\[
v(f(x) - f(y)) = v(x-y) + \lambda.
\]
Moreover, we can take $C$ to be definable over the same parameters as $f$.
\item (Differentiation~\cite[Cor.\,3.1.6]{CHR}) Continuing with the notation from the Jacobian property, there exists a finite set $C\subset K$ such that on each ball $B$ disjoint from $C$, $f$ is $C^1$, $v(f'(x))$ is constant for $x\in B$, and for all $x,y\in B$
\[
v(f(x)- f(y)) = v(x-y) + v(f'(x)).
\]
Again, we may take $C$ to be definable over the same parameters as $f$.
\item (Cell decomposition~\cite[Thm.\,5.2.4]{CHR}) There exists a good notion of cells such that every definable set $X\subset K^n$ may be written as a finite union of definable cells. Using cells, one also obtains a good dimension theory of definable sets~\cite[Prop.\,5.3.4]{CHR}.
\item (Resplendency~\cite[Thm.\,4.1.19]{CHR}) If $\cL'\supset \cL_{\val, \cB}$ is any expansion of the language by predicates on Cartesian powers of $\RV$, then $\Th_{\cL'}(K)$ is also $\omega$-h-minimal. For example, we may add an angular component map $\ac: K\to k$ by adding a splitting $\Gamma\to \RV$, and the resulting theory will still be $\omega$-h-minimal.
\end{enumerate}

Since $\Th_{\cL_{\val, \cB}}(K)$ is $\omega$-h-minimal, we obtain also slow growth of rational points on definable transcendental curves~\cite{CNSV}.

\begin{example}\label{ex:counting.dim}
We use the notion of rational points and counting dimension from~\cite{CNSV}. Consider $K = \RR((t))$ for the natural valuation and equipped with analytic structure from the full Weierstrass system $\cA((\ZZ))$. Let $X$ be the graph of the exponential function $\exp: [-1,1]_K\to K$, which is a definable transcendental curve. For $s\geq 1$ an integer, let $X_s$ denote the rational points of height at most $s$, i.e.\ those points $(x,y)\in X$ for which $x,y\in \RR[t]$ are polynomials of degree at most $s-1$. Then $X_1$ is infinite, as it contains all points of the form $(x,\exp(x))$ for $x\in \RR$. However, for every $s\geq 1$ we have that $X_s = X_1$, and so the counting dimension of $X$ is bounded by $(1,1,1)$. 

In general, if $X\subset K^n$ is a definable transcendental curve, then~\cite[Thm.\,2.2.1]{CNSV} shows that for every $\varepsilon>0$, the counting dimension of $X$ is bounded by $(N_\varepsilon, 1, \lceil \varepsilon s\rceil)$, where $N_\varepsilon$ is a constant depending on $\varepsilon$.
\end{example} 

Additionally, $1$-h-minimal fields automatically satisfy many other nice topological properties, by \cite{Now22,Now24,Now24closed}.
Note that the statements \cite{Now24} require that the residue field and value group are orthogonal, and that the structure on the latter is the pure ordered abelian group structure.
By Theorem \ref{thm:induced.structure.intro}, our ordered fields with real analytic structure do indeed satisfy this additional condition.

\subsection{The induced structure on the residue field and value group}

Let $\cB$ be a real Weierstrass system which is strong and rich, and recall the construction of the Weierstrass system $\cB'$ and the resulting three-sorted language $\cL_{\ac, \cB}$ from Section~\ref{sec:QE.RF.VG}. %Any almost real closed field $K$ with $\cB$-analytic structure can be equipped with $\cL_{\ac, \cB}$-structure, such that $\ac$ and $\cB$ are compatible.

\begin{theorem}\label{thm:induced.structure}
Let $\cB$ be a real Weierstrass system which is strong and rich. Let $K$ be an almost real closed field equipped with $\cB$-analytic structure, considered as a valued field for the natural valuation, and equip it with $\cL_{\ac, \cB}$-structure. Assume that $\ac$ and $\cB$ are compatible, then
\begin{enumerate}
	\item the theory $\Th_{\cL_{\ac, \cB}}(K)$ is $\omega$-h-minimal,
	\item the definable subsets in the residue field are those definable in $\cL_{\cB'}$,
	\item the definable subsets in the value group are those definable in the language of ordered abelian groups,
	\item the residue field and value group are stably embedded and orthogonal.
\end{enumerate}
\end{theorem}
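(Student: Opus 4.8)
The plan is to derive everything from the relative quantifier elimination result \cref{prop:QE.RF.VG} together with the term analysis of \cref{prop:term_analysis}, in close analogy with \cref{thm:real.h.minimal}. For part (1), I would run the same argument as in the proof of \cref{thm:real.h.minimal}: given a definable $X \subseteq K'$ over $A \cup \RV_{K',\lambda}$ in an elementarily equivalent model $K'$, one must produce a finite $\cL_{\ac,\cB}(A)$-definable set that $\lambda$-prepares $X$. Here one should note that the $\cL_{\ac,\cB}$-structure is an expansion of $\cL_{\val,\cB}$ by the angular component splitting $\ac$, which lives purely on $\RV$ (it is a section $\Gamma \to \RV$ together with the residue map); hence $\omega$-h-minimality is preserved by the resplendency result \cite[Thm.~4.1.19]{CHR}, quoted as item (4) in \cref{sec:h.min.proof}. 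So (1) is immediate from \cref{thm:real.h.minimal} once one observes that passing to the canonical valuation and adding $\ac$ stays within the resplendent framework.

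For parts (2) and (3), the engine is the valued-field quantifier elimination of \cref{prop:QE.RF.VG}. Let $Y \subseteq \RF^n$ be $\cL_{\ac,\cB}(K)$-definable. By \cref{prop:QE.RF.VG} it is defined by a Boolean combination of formulas of the form $\phi(\ac(t_1(x)),\dots)$ and $\psi(\norm{t_1(x)},\dots)$ with the $t_i$ being $\cL_{\cB}$-terms and $x$ ranging over valued-field and residue-field and value-group variables; after replacing valued-field parameters by their images and using that there are no remaining valued-field quantifiers, any such definition of a subset of $\RF^n$ reduces to one in which only residue-field variables and residue-field parameters occur, and the only residue-field operations are those of $\cL_{\cB'}$ — precisely because, as in the proof of \cref{prop:QE.RF.VG}, for a unit $v$ of $\cB$ one has $\ac(v(x)) = v_0(\ac(x))$ with $v_0$ a symbol of $\cL_{\cB'}$, while the $\norm{\cdot}$-conditions contribute nothing to a purely residue-field set. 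This gives (2). The analogous bookkeeping for $\VG$ — where the $\ac$-conditions become trivial and one is left with $\cL_{\oag}$-conditions on the $\norm{t_i}$ — gives (3); here one uses that for a unit $v$ one has $\norm{v(x)} = 1$, so that all norm-terms reduce to $\ZZ$-linear combinations of $\norm{x_j}$ as in the classical Presburger reduction.

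Part (4) then follows formally. Orthogonality of $\RF$ and $\VG$ is already the last sentence of \cref{prop:QE.RF.VG}: any definable subset of a product $\RF^m \times \VG^n$ splits, after eliminating valued-field quantifiers, into a finite union of rectangles because the $\ac$-atoms and the $\norm{\cdot}$-atoms never interact in a quantifier-free $\cL_{\ac,\cB}$-formula once the valued-field variables are gone. Stable embeddedness of each of $\RF$ and $\VG$ is likewise read off the same quantifier elimination: a subset of $\RF^n$ definable with arbitrary parameters from $K$ is, by the reduction in the previous paragraph, definable with parameters from $\RF$ alone, and similarly for $\VG$; this is the standard criterion for stable embeddedness.

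The main obstacle I anticipate is part (2) — more precisely, checking that after valued-field quantifier elimination a definable subset of $\RF^n$ really does become definable using \emph{only} the operations of $\cL_{\cB'}$ and no hidden remnants of the valued-field or value-group structure. One has to be careful that the terms $t_i$ appearing in \cref{prop:QE.RF.VG}, after the Weierstrass preparation $f_i = h_{ij}\cdot v_i \cdot g_i$ used in that proof, contribute to an $\ac$-atom only through top slices $v_0$ of units (which are $\cL_{\cB'}$-symbols) and through polynomial pieces $g_i$ in the remaining variable (whose $\ac$ is again polynomial in $\ac$ of the arguments), so that no genuinely new residue-field function sneaks in; the ratios $h_{i\ell}/h_{j\ell}$ and the constants $c$ must be tracked to confirm they live in $\RR \subseteq \RF$ and hence cause no trouble. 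Once this is pinned down, (3) and (4) are routine consequences, and I would present them briefly.
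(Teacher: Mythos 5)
Your proposal is correct and follows essentially the same route as the paper: part (1) is obtained from Theorem~\ref{thm:real.h.minimal} via the resplendency result of \cite[Thm.\,4.1.19]{CHR}, since the angular component is an expansion by structure living on $\RV$, and parts (2)--(4) are read off from the relative quantifier elimination of Proposition~\ref{prop:QE.RF.VG}. The extra bookkeeping you carry out for (2)--(4) (top slices of units landing in $\cL_{\cB'}$, no mixed $\RF$--$\VG$ atoms after eliminating valued-field quantifiers) is exactly what makes the paper's ``follows directly'' step work.
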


\begin{proof}
The first part follows from resplendency~\cite[Thm.\,4.1.19]{CHR}, since adding an $\ac$ map to $\cL_{\val, \cB}$ is an $\RV$-expansion. The other parts follow directly from the relative quantifier elimination result from Proposition~\ref{prop:QE.RF.VG}.
\end{proof}

In particular, if $\cB$ is the full Weierstrass system, note that the residue field becomes the structure $\RR_\an$.

\begin{remark}
The above result is complementary to the main Theorem by Cubides-Kovacsics and Haskell in \cite{CubidesHaskell}. They investigate real closed fields with separated and overconvergent analytic structure (instead of real analytic structure), and prove quantifier elimination in a one-sorted language. From this they deduce that such structures are weakly o-minimal. 
	
In our setting, an appropriate version of the latter also holds: if $K$ is a real closed field with $\cB$-analytic structure for some real Weierstrass system $\cB$, then $T_{\cL_{\cB}}(K)$ is weakly o-minimal. Indeed, as $T_{\cL_{\cB}}(K)$ is the expansion of an o-minimal structure by a convex predicate, it is weakly o-minimal by~\cite[{\S}~4]{BP98} (see also~\cite{Bai01}). %(note that an earlier version of \cite[{\S}~4]{BP98} is attributed to B. Baizhanov, but we have been unable to track down this original Russian article).
\end{remark}
 
\subsection{An Ax--Kochen--Ersov theorem}

We can now prove our Ax--Kochen--Ersov theorem, continuing with the language and set-up from above. 

\begin{theorem}\label{thm:AKE}
Let $\cB$ be a real Weierstrass system which is strong and rich. Let $K$ and $K'$ be almost real closed fields equipped with $\cB$-analytic structure, considered as valued fields for the natural valuation, and equip them with $\cL_{\ac, \cB}$-structure. Let $G$ and $G'$ be the value groups of $K$ and $K'$ and assume that $\ac$ and $\cB$ are compatible. Then
\[
K \equiv_{\cL_{\ac, \cB}} K' \text{ if and only if } G \equiv_{\cL_{\oag}} G'.
\]
\end{theorem}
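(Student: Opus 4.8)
The proof of Theorem~\ref{thm:AKE} is an immediate application of the relative quantifier elimination from Proposition~\ref{prop:QE.RF.VG}, together with the fact that the residue field is always $\RR$ with its fixed $\cL_{\cB'}$-structure. The forward direction is trivial: if $K \equiv_{\cL_{\ac, \cB}} K'$, then in particular any $\cL_{\oag}$-sentence about the value group, being expressible as an $\cL_{\ac, \cB}$-sentence (quantifying only over the $\VG$-sort), holds in $K$ iff it holds in $K'$, so $G \equiv_{\cL_{\oag}} G'$. The content is in the converse.

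So suppose $G \equiv_{\cL_{\oag}} G'$. First I would reduce to proving elementary equivalence of $K$ and $K'$ sentence by sentence. By Proposition~\ref{prop:QE.RF.VG}, every $\cL_{\ac, \cB}$-formula is equivalent modulo $\Th_{\cL_{\ac, \cB}}(K)$ (and likewise over $K'$) to one without valued field quantifiers. A quantifier-free formula in the $\VF$-sort is a Boolean combination of atomic formulas, each of which — after moving everything through $\ac$ and $\norm{\cdot}$ — becomes a statement in the $\RF$-sort and the $\VG$-sort about the angular components and valuations of $\cL_{\cB}$-terms evaluated at the free variables. Since a \emph{sentence} has no free $\VF$-variables, after eliminating all $\VF$-quantifiers one is left with a sentence all of whose quantifiers range over $\RF$ and $\VG$; that is, the relative quantifier elimination plus the orthogonality of $\RF$ and $\VG$ (also from Proposition~\ref{prop:QE.RF.VG}) shows that $\Th_{\cL_{\ac, \cB}}(K)$ is determined by $\Th_{\cL_{\cB'}}(\RF)$ together with $\Th_{\cL_{\oag}}(\VG)$. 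Concretely: any $\cL_{\ac, \cB}$-sentence is equivalent to a Boolean combination of an $\cL_{\cB'}$-sentence about the residue field and an $\cL_{\oag}$-sentence about the value group.

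Now the residue field of both $K$ and $K'$ (for the canonical valuation) is $\RR$, equipped with exactly the same $\cL_{\cB'}$-structure — this is built into the definition of almost real closed field with $\cB$-analytic structure and was already used in Theorem~\ref{thm:induced.structure}. Hence $\RF_K \equiv_{\cL_{\cB'}} \RF_{K'}$ automatically, indeed they are literally isomorphic. Combined with the hypothesis $G \equiv_{\cL_{\oag}} G'$, every Boolean combination of an $\cL_{\cB'}$-sentence about the residue field and an $\cL_{\oag}$-sentence about the value group takes the same truth value in $K$ and in $K'$. Therefore $K \equiv_{\cL_{\ac, \cB}} K'$.

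**Main obstacle.** The one place requiring care is making the reduction in the previous paragraph precise: one must check that after eliminating $\VF$-quantifiers via Proposition~\ref{prop:QE.RF.VG}, the remaining $\RF$- and $\VG$-quantifiers can genuinely be separated, i.e. that there is no residual interaction between the two sorts beyond Boolean structure. This is exactly the orthogonality statement in Proposition~\ref{prop:QE.RF.VG} (the two sorts share no common definable structure and there are no definable maps between them beyond the trivial ones), so the obstacle is really only bookkeeping. A secondary subtlety: the compatibility assumption on $\ac$ and $\cB$ is what guarantees that the induced $\cL_{\cB'}$-structure on the residue field is the \emph{standard} one (no extra named constants from $B_0$ beyond $\RR$), so that $\RF_K$ and $\RF_{K'}$ are $\cL_{\cB'}$-elementarily equivalent; this is why the hypothesis appears in the statement. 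Both points are handled by invoking results already proved, so beyond citing Proposition~\ref{prop:QE.RF.VG} and Theorem~\ref{thm:induced.structure} there is nothing deep left to do.
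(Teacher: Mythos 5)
Your proposal is correct and follows essentially the same route as the paper: invoke the relative quantifier elimination and orthogonality from Proposition~\ref{prop:QE.RF.VG}, note via Theorem~\ref{thm:induced.structure} that the residue fields carry the identical $\cL_{\cB'}$-structure on $\RR$, and thereby reduce elementary equivalence of $K$ and $K'$ to that of the value groups. Your write-up merely spells out the sentence-by-sentence bookkeeping (and the role of the compatibility hypothesis) that the paper leaves implicit.
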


\begin{proof}
By Theorem~\ref{thm:induced.structure}, the induced structure on the residue field of $K$ and $K'$ is exactly the same. Also, the induced structure on the value group is exactly the structure of an ordered abelian group. Hence the result follows from the relative quantifier elimination from Proposition~\ref{prop:QE.RF.VG}.
\end{proof}

\begin{remark}\label{rem:ac.cB.not.compatible}
Note that in this theorem, we tacitly assume that $\ac$ and $\cB$ are compatible. Without this assumption, the residue fields carries slightly more structure by adding symbols for $\ac(t^\omega)$ for every $\omega\in \Omega$. In particular, this result remains true without the compatibility assumption, but one has to add the requirement that the residue fields are elementarily equivalent with this extra structure of naming constants.
\end{remark}

\begin{remark}
We expect that one can prove a similar AKE principle when considering arbitrary convex valuations on $K$ and $K'$, again via relative quantifier elimination. However, it is not entirely clear what the structure on the residue field will be. As explained in Remark~\ref{rem:any.convex.valuation}, we expect that the residue field is itself another almost real closed field with analytic structure (from some different Weierstrass system).
\end{remark}

\bibliographystyle{amsplain}
\bibliography{anbib}

\def\cprime{$'$}
\providecommand{\bysame}{\leavevmode\hbox to3em{\hrulefill}\thinspace}
\providecommand{\MR}{\relax\ifhmode\unskip\space\fi MR }
% \MRhref is called by the amsart/book/proc definition of \MR.
\providecommand{\MRhref}[2]{%
  \href{http://www.ams.org/mathscinet-getitem?mr=#1}{#2}
}
\providecommand{\href}[2]{#2}
\begin{thebibliography}{10}

\bibitem{BP98}
Y.~Baisalov and B.~Poizat, \emph{Paires de structures o-minimales}, J. Symbolic
  Logic \textbf{63} (1998), no.~2, 570--578.

\bibitem{Bai01}
B.~S. Baizhanov, \emph{Expansion of a model of a weakly o-minimal theory by a
  family of unary predicates}, J. Symbolic Logic \textbf{66} (2001), no.~3,
  1382--1414.

\bibitem{Basarab}
{\c{S}}.~Basarab, \emph{Relative elimination of quantifiers for {H}enselian
  valued fields}, Ann. Pure Appl. Logic \textbf{53} (1991), no.~1, 51--74.

\bibitem{bhardwaj-vdd}
N.~Bhardwaj and L.~van~den Dries, \emph{{Analytic Ax-Kochen-Ersov theory with
  lifts of the residue field and value group}},  (2024), arXiv:2401.04094.

\bibitem{CNSV}
V.~Cantoral-Farf\'an, K.~H. Nguyen, M.~Stout, and F.~Vermeulen, \emph{A
  {P}ila-{W}ilkie theorem for {H}ensel minimal curves}, To appear in Model
  Theory (2024).

\bibitem{CHR}
R.~Cluckers, I.~Halupczok, and S.~Rideau-Kikuchi, \emph{Hensel minimality {I}},
  Forum Math. Pi \textbf{10} (2022), Paper No. e11, 68.

\bibitem{CHRV}
R.~Cluckers, I.~Halupczok, S.~Rideau-Kikuchi, and F.~Vermeulen, \emph{Hensel
  minimality {II}: {M}ixed characteristic and a diophantine application}, Forum
  Math. Sigma \textbf{11} (2023), Paper No. e89, 33.

\bibitem{CLip}
R.~Cluckers and L.~Lipshitz, \emph{Fields with analytic structure}, J. Eur.
  Math. Soc. (JEMS) \textbf{13} (2011), 1147--1223.

\bibitem{CLips}
\bysame, \emph{Strictly convergent analytic structures}, J. Eur. Math. Soc.
  (JEMS) \textbf{19} (2017), no.~1, 107 -- 149.

\bibitem{CLR}
R.~Cluckers, L.~Lipshitz, and Z.~Robinson, \emph{Analytic cell decomposition
  and analytic motivic integration}, Ann. Sci. \'{E}cole Norm. Sup. \textbf{39}
  (2006), no.~4, 535--568.

\bibitem{CLRr}
\bysame, \emph{Real closed fields with nonstandard and standard analytic
  structure}, Journal of the London Mathematical Society \textbf{78} (2008),
  no.~1, 198--212, %doi: 10.1112/jlms/jdn024 %arXiv:math.LO/0610666.

\bibitem{CubidesHaskell}
P.~Cubides~Kovacsics and D.~Haskell, \emph{Real closed valued fields with
  analytic structure}, Proc. Edinb. Math. Soc. \textbf{63} (2020), no.~1,
  249--261.

\bibitem{delon-farre}
F.~Delon and R.~Farr\'{e}, \emph{Some model theory for almost real closed
  fields}, The Journal of Symbolic Logic \textbf{61} (1996), no.~4, 1121--1152.

\bibitem{DvdD}
J.~Denef and {L. van den} Dries, \emph{$p$-adic and real subanalytic sets},
  Annals of Mathematics \textbf{128} (1988), no.~1, 79--138.

\bibitem{DenLip}
J.~Denef and L.~Lipshitz, \emph{Ultraproducts and approximation in local rings.
  {II}}, Math. Ann. \textbf{253} (1980), no.~1, 1--28.

\bibitem{vdDAx}
{L. van den} Dries, \emph{Analytic {A}x-{K}ochen-{E}rsov theorems}, Proceedings
  of the {I}nternational {C}onference on {A}lgebra, {P}art 3 ({N}ovosibirsk,
  1989) (Providence, RI), Contemp. Math., vol. 131, Amer. Math. Soc., 1992,
  pp.~379--398.

\bibitem{vdD14}
L.~van~den Dries, \emph{Lectures on the model theory of valued fields}, Model
  theory in algebra, analysis and arithmetic, Lecture Notes in Math., vol.
  2111, Springer, Heidelberg, 2014, pp.~55--157.

\bibitem{DMM}
{L.~van den} Dries, A.~Macintyre, and D.~Marker, \emph{The elementary theory of
  restricted analytic fields with exponentiation}, Ann. of Math. (2)
  \textbf{140} (1994), no.~1, 183--205.

\bibitem{Flen}
J.~Flenner, \emph{Relative decidability and definability in henselian valued
  fields}, J. Symbolic Logic \textbf{76} (2011), no.~4, 1240--1260.

\bibitem{Gab68}
A.~M. Gabri\`elov, \emph{Projections of semianalytic sets}, Funkcional. Anal. i
  Prilo\v{z}en. \textbf{2} (1968), no.~4, 18--30.

\bibitem{gunning_rossi}
R.C. Gunning and H.~Rossi, \emph{Analytic functions of several complex
  variables}, 1 ed., Prentice-Hall, Inc., 1965.

\bibitem{Hir73b}
H.~Hironaka, \emph{Introduction to real-analytic sets and real-analytic maps},
  Quaderni dei Gruppi di Ricerca Matematica del Consiglio Nazionale delle
  Ricerche, Universit\`a di Pisa, Istituto Matematico ``L. Tonelli'', Pisa,
  1973.

\bibitem{Hir73}
\bysame, \emph{Subanalytic sets}, Number theory, algebraic geometry and
  commutative algebra, in honor of {Y}asuo {A}kizuki, Kinokuniya Book Store,
  Tokyo, 1973, pp.~453--493.

\bibitem{KW76}
M.~Knebusch and M.J. Wright, \emph{Bewertungen mit reeller {H}enselisierung},
  J. Reine Angew. Math. \textbf{286/287} (1976), 314--321.

\bibitem{krapp}
S.~Krapp, S.~Kuhlmann, and G.~Lehéricy, \emph{{Strongly NIP almost real closed
  fields}}, Mathematical Logic Quarterly \textbf{67} (2021), no.~3, 321--328.

\bibitem{Kuh94}
F.-V. Kuhlmann, \emph{Quantifier elimination for {H}enselian fields relative to
  additive and multiplicative congruences}, Israel J. Math. \textbf{85} (1994),
  no.~1-3, 277--306.

\bibitem{Lip93}
L.~Lipshitz, \emph{Rigid subanalytic sets}, Amer. J. Math. \textbf{115} (1993),
  no.~1, 77--108.

\bibitem{Loj65}
S.~{\L}ojasiewicz, \emph{Ensembles semi-analytiques}, 1965.

\bibitem{Now22}
K.~J. Nowak, \emph{Extension of lipschitz maps definable in hensel minimal
  structures},  (2022), arXiv:2204.05900.

\bibitem{Now24closed}
\bysame, \emph{On closed definable subsets in hensel minimal structures},
  (2024), arXiv:2403.08039.

\bibitem{Now24}
\bysame, \emph{Tame topology in hensel minimal structures},  (2024),
  arXiv:2103.01836.

\bibitem{Pas}
J.~Pas, \emph{Uniform $p$-adic cell decomposition and local zeta functions},
  Journal f\"ur die reine und angewandte Mathematik \textbf{399} (1989),
  137--172.

\bibitem{Verm:h-min}
F.~Vermeulen, \emph{Geometric criteria for $\ell$-h-minimality},  (2022),
  arXiv:2204.04158.

\bibitem{Wei76}
V.~Weispfenning, \emph{On the elementary theory of {H}ensel fields}, Ann. Math.
  Logic \textbf{10} (1976), no.~1, 59--93.

\end{thebibliography}
\end{document}